\documentclass[
%%% one of
%submission
final
%%% if all authors have the same affiliation
% , nomarks
]{dmtcs-episciences}

% DON'T LOAD ANY STYLES THAT CHANGE THE PAGE LAYOUT
% AND DON'T CHANGE THE PAGE LAYOUT BY HAND, EITHER.

\usepackage[utf8]{inputenc}
\usepackage[round]{natbib}

\usepackage{amsmath}
\usepackage{cleveref}
\usepackage{tablefootnote}
\usepackage{subfigure}

\usepackage{diagbox}
\usepackage{tikz}
\usetikzlibrary{graphs,shapes,graphs.standard}

\newcommand{\ru}[1]{\textbf{R#1}}

%Definitions, Theorems and Examples numbering
%\theoremstyle{plain}
\newtheorem{theorem}{Theorem}
\newtheorem{question}[theorem]{Question}

\newtheorem{proposition}[theorem]{Proposition}
\crefname{proposition}{Proposition}{Propositions}
\newtheorem{lemma}[theorem]{Lemma}
\newtheorem{conjecture}[theorem]{Conjecture}

\newtheorem{observation}[theorem]{Observation}

\crefname{claim}{claim}{claims}
\Crefname{claim}{Claim}{Claims}

%% New commands

\DeclareMathOperator{\ad}{ad}
\DeclareMathOperator{\mad}{mad}

\title[$2$-distance $4$-coloring of planar subcubic graphs]{$2$-distance $4$-coloring of planar subcubic graphs with girth at least 21}

\author{Hoang La\affiliationmark{1,2}\thanks{hoang.la.research@gmail.com}\and Mickael Montassier\affiliationmark{1}\thanks{mickael.montassier@lirmm.fr}}
\affiliation{
LIRMM, Universit\'e de Montpellier, CNRS, Montpellier, France\\
LISN, Universit\'e Paris-Saclay, CNRS, Gif-sur-Yvette, France}

\keywords{2-distance coloring, planar graphs, discharging method}

\begin{document}

\publicationdata{vol. 26:3}{2024}{19}{10.46298/dmtcs.7563}{2021-06-09; 2021-06-09; 2024-06-12}{2024-11-01}

\maketitle

\begin{abstract}
A $2$-distance $k$-coloring of a graph is a proper vertex $k$-coloring where vertices at distance at most 2 cannot share the same color. We prove the existence of a $2$-distance $4$-coloring for planar subcubic graphs with girth at least 21. We also show a construction of a planar subcubic graph of girth 11 that is not $2$-distance $4$-colorable.
\end{abstract}

\section{Introduction}

A \emph{$k$-coloring} of the vertices of a graph $G=(V,E)$ is a map $\phi:V \rightarrow\{1,2,\dots,k\}$. A $k$-coloring $\phi$ is a \emph{proper coloring}, if and only if, for each edge $xy\in E,\phi(x)\neq\phi(y)$. In other words, no two adjacent vertices share the same color. The \emph{chromatic number} of $G$, denoted by $\chi(G)$, is the smallest integer $k$ such that $G$ has a proper $k$-coloring.  A generalization of $k$-coloring is $k$-list-coloring.
A graph $G$ is {\em $L$-list colorable} if for a
given list assignment $L=\{L(v): v\in V(G)\}$ there is a proper
coloring $\phi$ of $G$ such that for all $v \in V(G), \phi(v)\in
L(v)$. If $G$ is $L$-list colorable for every list assignment $L$ with $|L(v)|\ge k$ for all $v\in V(G)$, then $G$ is said to be {\em $k$-choosable} or \emph{$k$-list-colorable}. The \emph{list chromatic number} of a graph $G$ is the smallest integer $k$ such that $G$ is $k$-choosable. List coloring can be very different from usual coloring as there exist graphs with a small chromatic number and an arbitrarily large list chromatic number.

\cite{kramer2,kramer1} introduced the notion of 2-distance coloring. This notion generalizes the ``proper'' constraint (that does not allow two adjacent vertices to have the same color) in the following way: a \emph{$2$-distance $k$-coloring} is such that no pair of vertices at distance at most 2 have the same color (similarly to proper $k$-list-coloring, one can also define \emph{$2$-distance $k$-list-coloring}). The \emph{$2$-distance chromatic number} of $G$, denoted by $\chi^2(G)$, is the smallest integer $k$ so that $G$ has a 2-distance $k$-coloring.

For all $v\in V$, we denote $d_G(v)$ the degree of $v$ in $G$ and by
$\Delta(G) = \max_{v\in V}d_G(v)$ the maximum degree of a graph
$G$. For brevity, when it is clear from the context, we will use
$\Delta$ (resp. $d(v)$) instead of $\Delta(G)$ (resp. $d_G(v)$).  One
can observe that, for any graph $G$, $\Delta+1\leq\chi^2(G)\leq
\Delta^2+1$. The lower bound is trivial since, in a 2-distance
coloring, every neighbor of a vertex $v$ with degree $\Delta$, and $v$
itself must have a different color. As for the upper bound, a greedy
algorithm shows that $\chi^2(G)\leq \Delta^2+1$. Moreover, this bound
is tight for some graphs, for example, Moore graphs of type
$(\Delta,2)$, which are graphs where all vertices have degree
$\Delta$, are at distance at most two from each other, and the total
number of vertices is $\Delta^2+1$. See \Cref{tight upper bound
  figure}. Also,  incidence graphs of finite projective planes give
the inequality $\chi^2(G)\ge \Delta^2-\Delta+1$ when $\Delta - 1$ is a prime power \citep{brown1966graphs}.

\begin{figure}[!htbp]
\begin{center}
\subfigure[The Moore graph of type (2,2): the odd cycle $C_5$.]{
\begin{tikzpicture}[every node/.style={circle,thick,draw,minimum size=1pt,inner sep=2}]
  \graph[clockwise, radius=1.5cm] {subgraph C_n [n=5,name=A] };
\end{tikzpicture}
}
\hfil
\subfigure[The Moore graph of type (3,2): the Petersen graph.]{
\begin{tikzpicture}[every node/.style={circle,thick,draw,minimum size=1pt,inner sep=1}]
  \graph[clockwise, radius=1.5cm] {subgraph C_n [n=5,name=A] };
  \graph[clockwise, radius=0.75cm,n=5,name=B] {1/"6", 2/"7", 3/"8", 4/"9", 5/"10" };

  \foreach \i [evaluate={\j=int(mod(\i+6,5)+1)}]
     in {1,2,3,4,5}{
    \draw (A \i) -- (B \i);
    \draw (B \j) -- (B \i);
  }
\end{tikzpicture}
}
\hfil
\subfigure[The Moore graph of type (7,2): the Hoffman-Singleton graph.]{
\includegraphics[scale=0.12]{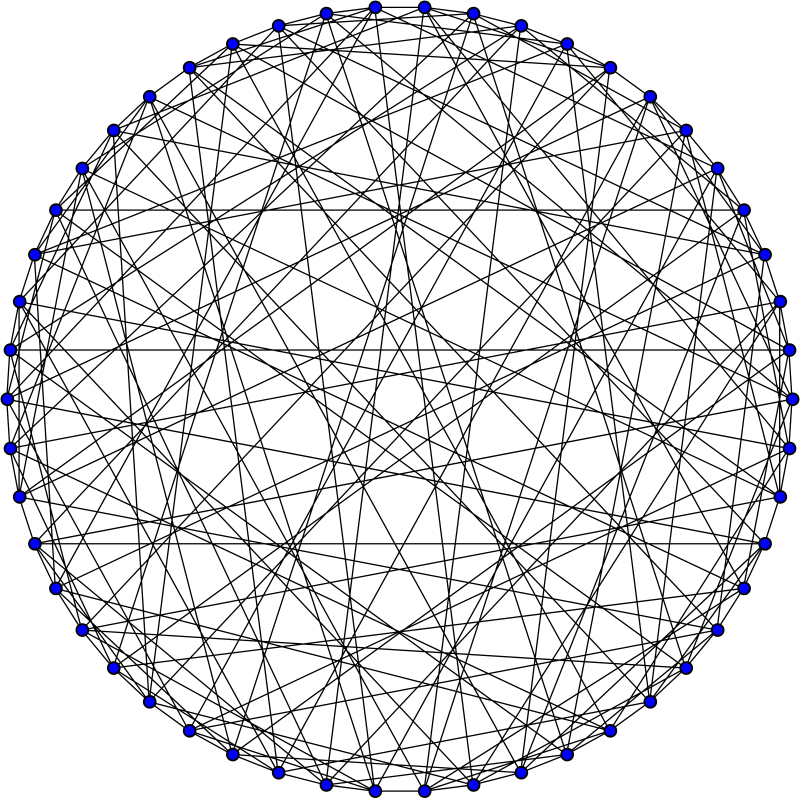}
}
\caption{Examples of Moore graphs for which $\chi^2=\Delta^2+1$.}
\label{tight upper bound figure}
\end{center}
\end{figure}

By nature, $2$-distance colorings and the $2$-distance chromatic number of a graph depend a lot on the number of vertices in the neighborhood of every vertex. More precisely, the ``sparser'' a graph is, the lower its $2$-distance chromatic number will be. One way to quantify the sparsity of a graph is through its maximum average degree. The \emph{average degree} $\ad$ of a graph $G=(V,E)$ is defined by $\ad(G)=\frac{2|E|}{|V|}$. The \emph{maximum average degree} $\mad(G)$ is the maximum, over all subgraphs $H$ of $G$, of $\ad(H)$. Another way to measure the sparsity when the graph is planar (a graph is \emph{planar} if one can draw its vertices with points on the plane, and edges with curves intersecting only at its endpoints) is through the girth, i.e. the length of a shortest cycle. We denote $g(G)$ the girth of $G$. Intuitively, the higher the girth of a planar graph is, the sparser it gets. 

%These two measures can actually be linked directly in the case of planar graphs.

When $G$ is a planar graph, Wegner conjectured in 1977 that  $\chi^2(G)$ becomes linear in $\Delta(G)$:

\begin{conjecture}[\cite{wegner}]
\label{conj:Wegner}
Let $G$ be a planar graph with maximum degree $\Delta$. Then,
$$
\chi^2(G) \leq \left\{
    \begin{array}{ll}
        7, & \mbox{if } \Delta\leq 3, \\
        \Delta + 5, & \mbox{if } 4\leq \Delta\leq 7,\\
        \left\lfloor\frac{3\Delta}{2}\right\rfloor + 1, & \mbox{if } \Delta\geq 8.
    \end{array}
\right.
$$
\end{conjecture}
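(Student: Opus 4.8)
The plan is to recast the statement in terms of ordinary colouring and then attack it by the discharging method, which is the standard route for every partial result toward this conjecture. Observe first that a $2$-distance $k$-colouring of $G$ is exactly a proper $k$-colouring of the square $G^2$, in which $uv$ is an edge whenever $u$ and $v$ are at distance at most $2$ in $G$; thus $\chi^2(G)=\chi(G^2)$, and each of the three regimes of the conjecture asks for a bound on $\chi(G^2)$ in terms of $\Delta$. I would fix one regime at a time, and within it suppose for contradiction that $G$ is a counterexample minimising $|V(G)|+|E(G)|$, so that every proper subgraph of $G$ (which is again planar, with no larger maximum degree and no smaller girth) admits a $2$-distance colouring with the target number of colours.

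The next step is to compile a list of \emph{reducible configurations}: local structures that cannot occur in $G$ because a colouring of a suitable smaller graph can always be extended over them. The basic counting is that a vertex $v$ sees at most $\sum_{u\sim v} d(u)$ vertices within distance $2$, so a vertex that is of low degree, or that has several low-degree neighbours, forbids only few colours and is easy to colour last; for $\Delta\le 3$ this already bounds the degree in $G^2$ by $9$, and the greedy bound $\chi^2(G)\le\Delta^2+1$ from the excerpt gives a crude starting point that the reducible configurations must improve on. I would aim to show that a minimal counterexample contains none of the light configurations in the list, which forces every low-degree vertex to be surrounded by high-degree vertices and long faces.

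Then, using Euler's formula in a charge form such as $\sum_{v}(d(v)-6)+\sum_{f}(2\ell(f)-6)=-12$, I would assign each vertex and face its initial charge and design discharging rules that send charge from high-degree vertices and long faces toward exactly the light structures ruled out above. If the reducible set is chosen well, every vertex and face ends with nonnegative charge, contradicting the negative total $-12$ and establishing the bound in that regime; for $\Delta\le 3$ this is precisely how the bound $\chi^2(G)\le 7$ has been obtained.

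The main obstacle is that the conjecture is genuinely sharp: the extremal planar graphs meet $\lfloor 3\Delta/2\rfloor+1$ exactly, so for large $\Delta$ there is essentially no slack between the worst local configurations and the target, and the reducible configurations provable by the counting above are too weak to absorb the deficit. This is why the regime $\Delta\ge 8$ remains open and the best unconditional results only give bounds of the form $c\Delta$ with $c$ strictly larger than $3/2$; closing the gap seems to require a structural idea beyond a mere refinement of the discharging rules, rather than a routine extension of the $\Delta\le 3$ argument.
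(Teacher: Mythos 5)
The statement you were asked about is Wegner's conjecture, which the paper records precisely as a \emph{conjecture}: it offers no proof, and none exists in the literature. Only fragments are known — the case $\Delta\le 3$ (Thomassen; Hartke \emph{et al.}), the asymptotic bound $\tfrac32\Delta(1+o(1))$ for large $\Delta$ (Havet \emph{et al.}), and special classes such as $K_4$-minor-free graphs. So there is no ``paper's own proof'' to compare yours against, and any complete argument you produced would be a major new result rather than a reconstruction.

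Your proposal is not such an argument, and to your credit you say so yourself in the final paragraph. What you describe — pass to $G^2$, take a minimal counterexample, find reducible configurations, discharge against Euler's formula — is indeed the method behind every partial result here, and your diagnosis of why it stalls (the bound $\lfloor 3\Delta/2\rfloor+1$ is attained, so there is no slack for the local counting arguments to exploit) is accurate. But a correctly identified strategy plus an honest admission that its key step cannot currently be carried out is not a proof: you never exhibit a reducible-configuration set, never state discharging rules, and never verify nonnegativity of the final charges, because for $\Delta\ge 8$ nobody knows how to. The concrete gap is the entire middle of the argument — the unavoidable set of reducible configurations whose discharging balances to a contradiction — and that gap is the open problem itself. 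The correct response to this prompt would have been to flag the statement as a conjecture rather than to outline an attack on it.
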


The upper bound for the case where $\Delta\geq 8$ is tight (see \Cref{wegner figure}(i)). Recently, the case $\Delta\leq 3$ was proved by Thomassen \citep{tho18}, and by Hartke \textit{et al.} \citep{har16} independently. For $\Delta\geq 8$, Havet \textit{et al.} \citep{havet} proved that the bound is $\frac{3}{2}\Delta(1+o(1))$, where $o(1)$ is as $\Delta\rightarrow\infty$ (this bound holds for 2-distance list-colorings). \Cref{conj:Wegner} is known to be true for some subfamilies of planar graphs, for example $K_4$-minor free graphs \citep{lwz03}.

\begin{figure}[htbp]
\begin{center}
\subfigure[A graph with girth 3 and $\chi^2=\lfloor\frac{3\Delta}{2}\rfloor+1$]{
\begin{tikzpicture}[scale=0.4]
\begin{scope}[every node/.style={circle,thick,draw,minimum size=1pt,inner sep=2}]
    \node[fill] (y) at (0,0) {};
    \node[fill] (z) at (5,0) {};
    \node[fill] (x) at (2.5,4.33) {};

    \node[fill] (xy) at (1.25,2.165) {};
    \node[fill] (yz) at (2.5,0) {};
    \node[fill] (zx) at (3.75,2.165) {};

    \node[fill,label={[label distance=-1cm]above:$\lfloor\frac{\Delta}{2}\rfloor-1$ vertices}] (xy1) at (-2.5,4.33) {};
    \node[fill] (xy2) at (-1.5625,3.78875) {};
    \node[fill] (xy3) at (-0.625,3.2475) {};

    \node[fill,label={[label distance=-0.7cm]above:$\lceil\frac{\Delta}{2}\rceil$ vertices}] (zx1) at (7.5,4.33) {};
    \node[fill] (zx2) at (6.5625,3.78875) {};
    \node[fill] (zx3) at (5.625,3.2475) {};

    \node[fill,label={[label distance = -0.7cm]below:$\lfloor\frac{\Delta}{2}\rfloor$ vertices}] (yz1) at (2.5,-4.33) {};
    \node[fill] (yz2) at (2.5,-3.2475) {};
    \node[fill] (yz3) at (2.5,-2.165) {};

\end{scope}

\begin{scope}[every edge/.style={draw=black,thick}]
    \path (x) edge (y);
    \path (y) edge (z);
    \path (z) edge (x);

    \path (x) edge[bend left] (y);

    \path (x) edge (xy1);
    \path (x) edge (xy2);
    \path (x) edge (xy3);

    \path (y) edge (xy1);
    \path (y) edge (xy2);
    \path (y) edge (xy3);

    \path (x) edge (zx1);
    \path (x) edge (zx2);
    \path (x) edge (zx3);

    \path (z) edge (zx1);
    \path (z) edge (zx2);
    \path (z) edge (zx3);

    \path (y) edge (yz1);
    \path (y) edge (yz2);
    \path (y) edge (yz3);

    \path (z) edge (yz1);
    \path (z) edge (yz2);
    \path (z) edge (yz3);

    \path[dashed] (xy) edge (xy3);
    \path[dashed] (yz) edge (yz3);
    \path[dashed] (zx) edge (zx3);
\end{scope}
\draw[rotate=-30] (-0.625-1.4,3.2475-0.7) ellipse (3cm and 0.5cm);
\draw[rotate=30] (5+0.625+1,3.2475-3.25) ellipse (3cm and 0.5cm);
\draw[rotate=90] (-2,-2.5) ellipse (3cm and 0.5cm);
\end{tikzpicture}
}
\hfil
\subfigure[A graph with girth 4 and $\chi^2=\lfloor\frac{3\Delta}{2}\rfloor-1$.]{
\begin{tikzpicture}[scale=0.4]
\begin{scope}[every node/.style={circle,thick,draw,minimum size=1pt,inner sep=2}]
    \node[fill] (y) at (0,0) {};
    \node[fill] (z) at (5,0) {};
    \node[fill] (x) at (2.5,4.33) {};

    \node[fill] (xy) at (1.25,2.165) {};
    \node[fill] (yz) at (2.5,0) {};
    \node[fill] (zx) at (3.75,2.165) {};

    \node[fill,label={[label distance=-1cm]above:$\lfloor\frac{\Delta}{2}\rfloor-1$ vertices}] (xy1) at (-2.5,4.33) {};
    \node[fill] (xy2) at (-1.5625,3.78875) {};
    \node[fill] (xy3) at (-0.625,3.2475) {};

    \node[fill,label={[label distance=-0.7cm]above:$\lceil\frac{\Delta}{2}\rceil$ vertices}] (zx1) at (7.5,4.33) {};
    \node[fill] (zx2) at (6.5625,3.78875) {};
    \node[fill] (zx3) at (5.625,3.2475) {};

    \node[fill,label={[label distance = -0.7cm]below:$\lfloor\frac{\Delta}{2}\rfloor$ vertices}] (yz1) at (2.5,-4.33) {};
    \node[fill] (yz2) at (2.5,-3.2475) {};
    \node[fill] (yz3) at (2.5,-2.165) {};

\end{scope}

\begin{scope}[every edge/.style={draw=black,thick}]
    \path (x) edge (y);
    \path (y) edge (z);
    \path (z) edge (x);

    \path (x) edge (xy1);
    \path (x) edge (xy2);
    \path (x) edge (xy3);

    \path (y) edge (xy1);
    \path (y) edge (xy2);
    \path (y) edge (xy3);

    \path (x) edge (zx1);
    \path (x) edge (zx2);
    \path (x) edge (zx3);

    \path (z) edge (zx1);
    \path (z) edge (zx2);
    \path (z) edge (zx3);

    \path (y) edge (yz1);
    \path (y) edge (yz2);
    \path (y) edge (yz3);

    \path (z) edge (yz1);
    \path (z) edge (yz2);
    \path (z) edge (yz3);

    \path[dashed] (xy) edge (xy3);
    \path[dashed] (yz) edge (yz3);
    \path[dashed] (zx) edge (zx3);
\end{scope}
\draw[rotate=-30] (-0.625-1.4,3.2475-0.7) ellipse (3cm and 0.5cm);
\draw[rotate=30] (5+0.625+1,3.2475-3.25) ellipse (3cm and 0.5cm);
\draw[rotate=90] (-2,-2.5) ellipse (3cm and 0.5cm);
\end{tikzpicture}
}
\caption{Graphs with $\chi^2\approx \frac32 \Delta$}
\label{wegner figure}
\end{center}
\end{figure}
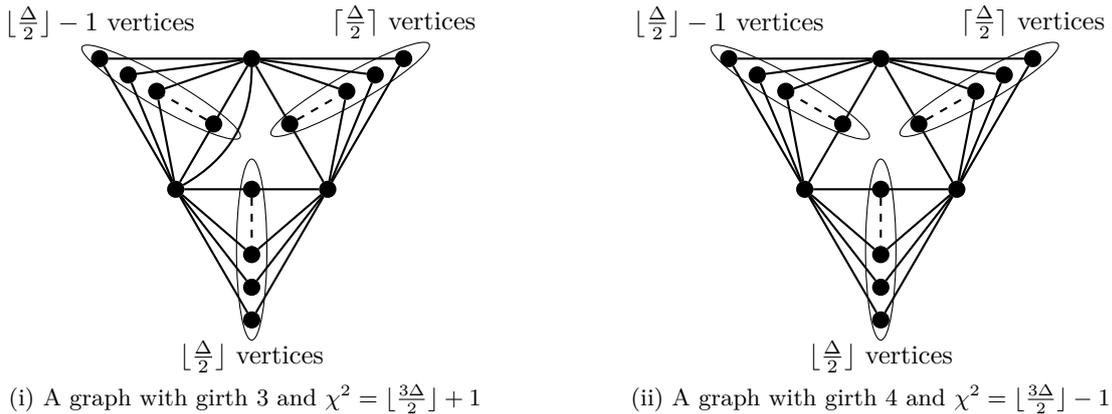

Wegner's conjecture motivated extensive researches on $2$-distance chromatic number of sparse graphs, either of planar graphs with high girth or of graphs with upper bounded maximum average degree which are directly linked due to \Cref{maximum average degree and girth proposition}.

\begin{proposition}[Folklore]\label{maximum average degree and girth proposition}
For every planar graph $G$, $(\mad(G)-2)(g(G)-2)<4$.
\end{proposition}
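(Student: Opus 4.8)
The plan is to combine Euler's formula with a face-counting argument, applied to a well-chosen subgraph. First I would reduce the statement to a single connected graph. Since $G$ is finite, $\mad(G)=\ad(H)$ for some subgraph $H$, and because the average degree of a disconnected graph is a weighted mediant of the average degrees of its components, I may assume $H$ is connected; moreover every cycle of $H$ is a cycle of $G$, so $g(H)\ge g(G)$, and since $\frac{2g}{g-2}=2+\frac{4}{g-2}$ is decreasing in $g$ it suffices to bound $\ad(H)$ by $\frac{2g(H)}{g(H)-2}$. If $H$ is acyclic then $\ad(H)<2<\frac{2g(H)}{g(H)-2}$ and we are done, so I may assume $H$ contains a cycle (in particular $g(H)\ge 3$, so $g(H)-2\ge 1>0$).

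Next, for such a connected plane graph $H$ with $n$ vertices, $m$ edges, $f$ faces and girth $g:=g(H)$, I would invoke Euler's formula $n-m+f=2$ together with a face-length bound. Summing the boundary walks of all faces counts each edge exactly twice (once for each of the two faces it borders, a bridge being counted twice in the unique face it borders), so the total is $2m$, while each face, being incident to a cycle, has boundary length at least $g$; hence $gf\le 2m$. Substituting $f=m-n+2$ gives $g(m-n+2)\le 2m$, that is
\[
m\le \frac{g(n-2)}{g-2}.
\]
Dividing by $n$ yields $\ad(H)=\frac{2m}{n}\le \frac{2g}{g-2}\cdot\frac{n-2}{n}<\frac{2g}{g-2}=2+\frac{4}{g-2}$, the last inequality being strict because $\frac{n-2}{n}<1$.

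Finally I would assemble the pieces: for the chosen $H$ this gives $\ad(H)-2<\frac{4}{g(H)-2}\le \frac{4}{g(G)-2}$, and since $H$ realizes $\mad(G)$ we conclude $(\mad(G)-2)(g(G)-2)<4$, as claimed.

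The main obstacle is not the arithmetic but the careful justification of the face-length inequality $gf\le 2m$: it requires that every face boundary of the connected plane graph $H$ has length at least $g$, which is immediate for $2$-connected graphs but in the presence of cut-edges needs the remark about bridges (each bridge traversed twice on the boundary, and every face still enclosing a cycle), together with the separate treatment of the acyclic and disconnected cases handled in the reduction above.
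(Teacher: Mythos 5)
Your proof is correct and complete. Note, however, that the paper itself offers no proof of this proposition: it is stated as folklore and used as a black box to translate bounded-$\mad$ results into girth results, so there is nothing to compare against. Your argument is the standard one — restrict to a connected subgraph $H$ realizing $\mad(G)$, observe $g(H)\ge g(G)$, and combine Euler's formula with the face-length bound $g f\le 2m$ to get $m\le \frac{g(n-2)}{g-2}$ and hence $\ad(H)<\frac{2g}{g-2}$. The one point you correctly flag but do not fully discharge is that every face boundary of a connected plane graph containing a cycle has length at least $g$; the quick justification is that a face whose boundary walk traverses some edge only once contains a cycle in its boundary, while a face whose boundary walk traverses every edge twice has a tree as boundary, which (since a tree does not separate the plane) forces $H$ itself to be that tree, contradicting the assumption that $H$ contains a cycle. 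With that remark added, the proof is airtight, and the degenerate cases (acyclic $H$, disconnected $H$) are handled cleanly in your reduction.
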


As a consequence, any theorem with an upper bound on $\mad(G)$ can be translated to a theorem with a lower bound on $g(G)$ under the condition that $G$ is planar.

Many results have taken the following form: \textit{every planar graph $G$ of girth $g(G)\geq g_0$ and $\Delta(G)\geq \Delta_0$ satisfies $\chi^2(G)\leq \Delta(G)+c(g_0,\Delta_0)$ where $c(g_0,\Delta_0)$ is a small constant depending only on $g_0$ and $\Delta_0$}. Due to \Cref{maximum average degree and girth proposition}, these type of results sometimes come as a corollary of the same result on graphs with bounded maximum average degree. \Cref{recap table 2-distance} shows all known such results, up to our knowledge, on the $2$-distance chromatic number of planar graphs with fixed girth, either proven directly for planar graphs with high girth or came as a corollary of a result on graphs with bounded maximum average degree.

\begin{table}[htbp]
\begin{center}
\scalebox{0.685}{%
\begin{tabular}{||c||c|c|c|c|c|c|c|c||}
\hline
\backslashbox{$g_0$ \kern-1em}{\kern-1em $\chi^2(G)$} & $\Delta+1$ & $\Delta+2$ & $\Delta+3$ & $\Delta+4$ & $\Delta+5$ & $\Delta+6$ & $\Delta+7$ & $\Delta+8$\\
\hline \hline
$3$ & \slashbox{\phantom{\ \ \ \ \ }}{} & & &$\Delta=3$ \tablefootnote{\citep{tho18,har16}}& & & & \\
\hline
$4$ & \slashbox{\phantom{\ \ \ \ \ }}{} & & & & & & & \\
\hline
$5$ & \slashbox{\phantom{\ \ \ \ \ }}{} &$\Delta\geq 10^7$ \tablefootnote{\citep{bon19}}\footref{list footnote} &$\Delta\geq 339$ \tablefootnote{\citep{don17b}} &$\Delta\geq 312$ \tablefootnote{\citep{don17}} &$\Delta\geq 15$ \tablefootnote{\citep{bu18b}}\tablefootnote{\label{other footnote}Corollaries of more general colorings of planar graphs.} &$\Delta\geq 12$ \tablefootnote{\citep{bu16}}\footref{list footnote} & $\Delta\neq 7,8$ \tablefootnote{\citep{don17}} &all $\Delta$ \tablefootnote{\citep{dl16}}\\
\hline
$6$ & \slashbox{\phantom{\ \ \ \ \ }}{} &$\Delta\geq 17$ \tablefootnote{\citep{bon14}}\footref{mad footnote} &$\Delta\geq 9$ \tablefootnote{\citep{bu16}}\footref{list footnote} & &all $\Delta$ \tablefootnote{\citep{bu11}} & & & \\
\hline
$7$ & $\Delta\geq 16$ \tablefootnote{\citep{iva11}}\tablefootnote{\label{list footnote}Corollaries of 2-distance list-colorings of planar graphs.} & & &$\Delta=4$ \tablefootnote{\citep{cra13}}\tablefootnote{\label{mad list footnote}Corollaries of 2-distance list-colorings of graphs with a bounded maximum average degree.} & & & & \\
\hline
$8$ & $\Delta\geq 9$ \tablefootnote{\citep{lmpv19}}\footref{other footnote}& &$\Delta=5$ \tablefootnote{\citep{bu15}}\footref{mad list footnote} & & & & & \\
\hline
$9$ &$\Delta\geq 7$ \tablefootnote{\citep{lm21}}\footref{mad footnote} &$\Delta=5$ \tablefootnote{\citep{bu15}}\footref{mad list footnote} &$\Delta=3$ \tablefootnote{\citep{cra07}}\footref{list footnote} & & & & & \\
\hline
$10$ & $\Delta\geq 6$ \tablefootnote{\citep{iva11}}\footref{list footnote} & & & & & & & \\
\hline
 $11$ & &$\Delta=4$ \tablefootnote{\citep{cra13}}\footref{mad list footnote} & & & & & & \\
\hline
$12$ & $\Delta=5$ \tablefootnote{\citep{iva11}}\footref{list footnote} &$\Delta=3$ \tablefootnote{\citep{bi12}}\footref{list footnote} & & & & & &\\
\hline
$13$ & & & & & & & &\\
\hline
$14$ &$\Delta\geq 4$ \tablefootnote{\citep{bon13}}\tablefootnote{\label{mad footnote}Corollaries of 2-distance colorings of graphs with a bounded maximum average degree.} & & & & & & &\\
\hline
$\dots$ & & & & & & & & \\
\hline
$21$ & \textcolor{red}{$\Delta = 3$} \tablefootnote{Our result.} & & & & & & & \\
\hline
$22$ & $\Delta=3$ \tablefootnote{\citep{bi12bis}} & & & & & & & \\
\hline
\end{tabular}}
\caption{The latest results with a coefficient 1 before $\Delta$ in the upper bound of $\chi^2$.}
\label{recap table 2-distance}
\end{center}
\end{table} 

For example, the result from line ``7'' and column ``$\Delta + 1$'' from \Cref{recap table 2-distance} reads as follows : ``\emph{every planar graph $G$ of girth at least 7  and of $\Delta$ at least 16 satisfies $\chi^2(G)\leq \Delta+1$}''. The crossed out cases in the first column correspond to the fact that, for $g_0\leq 6$, there are planar graphs $G$ with $\chi^2(G)=\Delta+2$ for arbitrarily large $\Delta$ \citep{bor04,dvo08b}. The lack of results for $g = 4$ is due to the fact that the graph in \Cref{wegner figure}(ii) has girth 4, and $\chi^2=\lfloor\frac{3\Delta}{2}\rfloor-1$ for all $\Delta$.

We are interested in the case $\chi^2(G)=\Delta+1$ as $\Delta+1$ is a trivial lower bound for $\chi^2(G)$. In particular, we are interested in planar \emph{subcubic} graphs, which are graphs with maximum degree $\Delta=3$. More precisely, we are trying to answer the following question:

\begin{question} \label{question}
What is the smallest $g_0$ such that every planar subcubic graph $G$ with girth $g(G)\geq g_0$ verifies $\chi^2(G)\leq 4$?
\end{question}

This question was first looked at in \citep{bor04} by Borodin \textit{et al.} where the authors proved that $g_0\leq 24$. Later on, Borodin and Ivanova improved the upper bound on $g_0$ to 23 in \citep{bor11}, then 22 in \citep{bi12bis}. In this article, we aim to prove that $g_0$ is at most 21. All of these results rely on the fact that there are only 4 colors in total, an approach that cannot be generalized to list coloring.

\begin{theorem} \label{main theorem}
If $G$ is a planar subcubic graph with $g(G)\geq 21$, then $\chi^2(G)\leq 4$.
\end{theorem}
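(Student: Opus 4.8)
The plan is to argue by contradiction via the discharging method. Suppose the theorem fails, and let $G$ be a counterexample minimizing $|V(G)|+|E(G)|$; we may assume $G$ is connected. By minimality every proper subgraph of $G$ admits a $2$-distance $4$-coloring, and I will use this to forbid certain local configurations (\emph{reducible configurations}) in $G$, before deriving a contradiction from a charge count based on Euler's formula and the girth hypothesis.

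First I would record the easy structural reductions. A vertex of degree $1$ is immediately reducible: deleting it, taking a $2$-distance $4$-coloring of $G-v$ by minimality, and observing that $v$ sees at most $3$ colored vertices within distance $2$ (its neighbor and that neighbor's at most $2$ other neighbors), so a free color remains. Hence $G$ has minimum degree $2$, and its vertices split into degree-$2$ and degree-$3$ vertices. The degree-$2$ vertices form \emph{threads} (maximal paths of degree-$2$ vertices) whose endpoints attach to degree-$3$ vertices. The heart of the reducibility analysis is to bound how short these threads can be and how degree-$3$ vertices can cluster: I would show that threads that are ``too short'', and degree-$3$ vertices joined by ``too short'' threads, cannot occur. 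The key mechanism is that a degree-$3$ vertex together with its three neighbors must receive all four colors (a rainbow), which rigidly constrains the colors available along incident threads; this lets one delete an internal degree-$2$ vertex (or a short thread), $2$-distance $4$-color the smaller graph, and extend — occasionally after a local recoloring to avoid the situation where the at most four vertices within distance $2$ of the deleted vertex exhaust all colors.

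Second, for the discharging I would assign to each vertex $v$ the charge $d(v)-4$ and to each face $f$ the charge $\ell(f)-4$, so that by Euler's formula the total charge is $-8<0$ (equivalently, one may pass through \Cref{maximum average degree and girth proposition}, which gives $\mad(G)<\tfrac{42}{19}$, and discharge on vertices alone). The girth hypothesis makes every face have length at least $21$, hence charge at least $17$, while degree-$2$ vertices carry the deficit. I would then design rules sending charge from the long faces and from degree-$3$ vertices across and along threads to the degree-$2$ vertices, and verify, using the absence of the reducible configurations, that every vertex and face ends with nonnegative charge — contradicting the negative total and completing the proof.

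The hard part will be the reducibility step and its interaction with the discharging weights. Because girth $21$ is close to the threshold, the positive charge budget is tight, so the set of reducible configurations must be simultaneously rich enough that the discharging closes and local enough that each reduction can actually be carried out; the delicate point in each reduction is controlling the colors of the (up to four) vertices within distance $2$ of a deleted degree-$2$ vertex, which typically forces a careful, somewhat lengthy case analysis with auxiliary recolorings. Tuning the discharging rules so that the charge released by a face of length $\ell$ exactly covers the degree-$2$ vertices it must support — given the density of such vertices permitted by the surviving configurations — is where the value $21$, rather than a larger girth, is extracted.
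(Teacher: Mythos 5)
Your outline correctly identifies the paper's overall strategy (minimal counterexample, reducible configurations, discharging against Euler's formula), but as written it is a plan rather than a proof: every step that carries actual mathematical content is deferred with ``I would show'' or ``I would then design rules \dots and verify''. The gap is not a single missing idea but the entire technical core. Concretely, you never specify \emph{which} configurations are reducible, and this is where all the difficulty lives: the paper needs not only bounds on thread lengths (no $6^+$-paths, no $3$-vertex of type $(3^+,3^+,3^+)$, etc.) but a list of seven forbidden \emph{pairs} and nineteen forbidden \emph{triples} of $3$-vertices joined by paths of prescribed lengths, such as $(430\text{-}024)$ or $(421\text{-}132\text{-}214)$. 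Proving these reducible is not a matter of deleting one degree-$2$ vertex and recoloring locally; one deletes a subgraph $H$ containing two or three $3$-vertices together with all the $2$-vertices on their incident paths, colors $G-H$ by minimality, and extends via a toolbox of list-colorability lemmas for paths with prescribed list sizes (the paper's \Cref{122 lemma,restriction lemma,colorable lemma}). Your proposed mechanism --- that a $3$-vertex and its neighbors form a rainbow, constraining colors along threads --- is not the engine of these arguments; the engine is tracking lower bounds on list sizes $|L(u_i)|$ along paths and exploiting the fact that three vertices with lists of sizes $1,2,2$ on a path fail to be colorable only when all lists lie in a common $2$-set.

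On the discharging side, your charge assignment $d(v)-4$ and $\ell(f)-4$ would force faces to send charge to vertices, and you give no rules for doing so; the paper instead uses $\mu(u)=\tfrac{19}{2}d(u)-21$ and $\mu(f)=d(f)-21$, so that girth $21$ makes every face nonnegative from the start and all transfers are vertex-to-vertex (your parenthetical about $\mad(G)<\tfrac{42}{19}$ points in this direction but is not developed). The rules themselves are delicate: a $3$-vertex of type $(k,l,m)$ with $k+l+m\le 7$ sends fractional charges $\tfrac52,\tfrac32,\tfrac34,\tfrac12,\tfrac14$ to specific neighboring $3$-vertex types depending on the length of the connecting path, and the nonnegativity check is a long case analysis in which each case is closed by citing exactly one of the forbidden pairs or triples. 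Without exhibiting the configurations, their reducibility proofs, the rules, and this verification, the argument does not establish the theorem; what you have written is an accurate description of the shape of the paper's proof, not a proof.
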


In \Cref{sec2}, we present the proof of \Cref{main theorem} using the well-known discharging method. The reducible configurations are obtained by further exploiting the techniques presented in \citep{bi12bis}.

There was also another approach to \Cref{question}, that is to find lower bounds on $g_0$. While construction of planar graphs with $\chi^2(G)\geq \Delta+2$ for any $\Delta$ is known for small girth \citep{bor04,dvo08b}. The first construction with high girth ($g_0\geq 9$) was presented by Dvo\u{r}ak \textit{et al.} in \citep{dvo08} where the authors relied on an interesting property of $2$-distance $4$-colorings of vertices at distance 5 from each other. In \Cref{sec3}, we improve further upon this idea to build a planar subcubic graph of girth 11 with $\chi^2(G)\geq 5$. In other words, we improved the lower bound on $g_0$ from 9 to 11. 

\section{Proof of \texorpdfstring{\Cref{main theorem}}{Theorem 4}}
\label{sec2}

\paragraph{Notations and drawing conventions.} For $v\in V(G)$, the \emph{2-distance neighborhood} of $v$, denoted $N^*_G(v)$, is the set of 2-distance neighbors of $v$, which are vertices at distance at most two from $v$, not including $v$. We also denote $d^*_G(v)=|N^*_G(v)|$. We call $F(G)$ the set of faces of $G$ and for all $f\in F(G)$, $d_G(f)$ is the size of face $f$ (bridges are counted twice). We will drop the subscript and the argument when it is clear from the context. Also for conciseness, from now on, when we say ``to color'' a vertex, it means to color such vertex differently from all of its colored neighbors at distance at most two. Similarly, any considered coloring will be a 2-distance coloring. We will also say that a vertex $u$ ``sees'' another vertex $v$ if $u$ and $v$ are at distance at most 2 from each other. 

Some more notations:
\begin{itemize}
\item A \emph{$d$-vertex} is a vertex of degree $d$.
\item A \emph{$k$-path} ($k^+$-path, $k^-$-path) is a path of length $k+1$ (at least $k+1$, at most $k+1$) where the $k$ internal vertices are 2-vertices and the endvertices are $3$-vertices.
\item We denote \emph{$(k,l,m)$} a $3$-vertex incident to a $k$-path, an $l$-path, and an $m$-path.
\item A pair of vertices $(k^+,l^+,m)$ and $(m,n^+,p^+)$ joined by an $m$-path will be denoted by $(klm-mnp)$. Similarly, a triple of vertices $u=(k^+,l^+,m)$, $v=(m,n^+,p)$, and $w=(p,q^+,r^+)$ where $u$ and $v$ are joined by an $m$-path and $v$ and $w$ are joined by a $p$-path, will be denoted by $(klm-mnp-pqr)$. This notation is taken from \citep{bi12bis}.
\end{itemize}
As a drawing convention for the rest of the figures, black vertices will have a fixed degree, which is represented, and white vertices may have a higher degree than what is drawn.

Let $G$ be a counterexample to \Cref{main theorem} minimizing $|V(G)|+|E(G)|$. Recall that every cycle except $C_5$ is colorable with 4 colors hence, since $G$ has girth at least 21, it has maximum degree $\Delta=3$. The purpose of the proof is to prove that $G$ cannot exist.
The main idea of the proof relies on studying configurations (graphs with given list sizes) that are ``almost'' but not colorable. Using the fact that we have only 4 colors in total, we are able to deduce the exact content of the lists, thus allowing us to reduce new configurations and improve upon the previous results.
In the following sections, we will study the structural properties of $G$ (\Cref{tutu}). We will then apply a discharging procedure (\Cref{tonton}). The discharging argument captures the sparseness of the graph, meaning that one of our reducible configurations must appear. More formally, we have due to the Euler formula ($|V|-|E|+|F|=2$):
\begin{equation}\label{equation}
\sum_{u\in V(G)} \left(\frac{19}2d(u)-21\right) + \sum_{f\in F(G)} \left(d(f)-21\right) = -42 < 0
\end{equation}

We assign to each vertex $u$ the charge $\mu(u)=\frac{19}2d(u)-21$ and to each face $f$ the charge $\mu(f)=d(f)-21$. To prove the non-existence of $G$, we will redistribute the charges preserving their sum and obtaining a non-negative total charge, which will contradict \Cref{equation}.

\subsection{Useful observations}

Before studying the structural properties of $G$, we will introduce some useful observations and lemmas that will be the core of the reducibility proofs of our configurations.

For a vertex $u$, let $L(u)$ denote the set of available colors for $u$ from the set $\{a,b,c,d\}$. For convenience, the lower bound on $|L(u)|$ will be depicted on the figures below the corresponding vertex $u$. 

\begin{lemma} \label{122 lemma}
The graphs depicted in \Cref{122 figure} are colorable unless their lists of colors are exactly what is indicated.
\end{lemma}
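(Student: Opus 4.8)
The plan is to treat each graph in \Cref{122 figure} as a finite list-coloring instance for the square graph $G^2$, in which two vertices are declared adjacent exactly when they lie at distance at most $2$ in the configuration; a proper list-coloring of this $G^2$ is then precisely a $2$-distance coloring respecting the lists. Since every configuration here is built from a central $(1,2,2)$-vertex together with the $2$-vertices lying on its three incident paths, the conflict graph $G^2$ is small and essentially path-like, so a direct, hand-checkable analysis is feasible and the role of the indicated lists is merely to encode the colors already forbidden by the vertices lying just outside the drawn configuration.

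First I would fix a coloring order that disposes of the most constrained vertices first. Along a chain of $2$-vertices the only $G^2$-constraints run between vertices at distance at most $2$, so each freshly colored vertex sees only a bounded number of already-colored neighbors. Coloring the internal $2$-vertices inward from the path endpoints, I would repeatedly color and delete any vertex whose list is strictly larger than the number of distinct colors forbidden by its colored neighbors. This greedy reduction peels away everything except a small \emph{tight core} on which every list is exactly as large as the number of constraints, and the entire content of the lemma reduces to deciding colorability of that core.

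For the tight core I would argue by a short case analysis, exploiting the symmetry of the four colors $\{a,b,c,d\}$ to cut down the number of essentially distinct list patterns: after normalizing one list, only finitely many configurations survive, and for each I would either exhibit an explicit coloring or check that every color choice is blocked. A clean way to organize this is Hall's theorem applied to the bipartite vertex--color incidence of the core: a system of distinct representatives fails exactly when some set of core vertices collectively has too few available colors, and identifying which sets can be deficient is precisely what pins down the exceptional list assignments named in the statement.

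The main obstacle I expect is not any single coloring step but the bookkeeping needed to make the characterization \emph{exact} in both directions — that every non-exceptional list admits a coloring, and that each exceptional list genuinely admits none. The delicate point is the ``only if'' direction: I must ensure the greedy reduction never discards a color choice that could later rescue an apparently bad case. I would therefore perform the reduction in a reversible order and treat the tight core uniformly, so that the forbidden lists emerge as the unique Hall-deficiency sets rather than being stipulated by hand.
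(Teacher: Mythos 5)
Your underlying engine --- Hall's theorem applied to the vertex--color incidence, with non-colorability characterized by a deficient set of vertices --- is exactly what the paper uses, so the method is sound. But you have misread the configuration. \Cref{122 figure} does not depict ``a central $(1,2,2)$-vertex together with the $2$-vertices lying on its three incident paths'': it depicts a single path $u_1u_2u_3$ on three vertices, whose square is a triangle, and the annotations $\geq 1$, $\geq 2$, $\geq 2$ are lower bounds on the sizes of the lists $L(u_i)$, not path lengths in the $(k,l,m)$ sense. Consequently your preparatory machinery --- coloring inward from path endpoints, greedy peeling to a ``tight core'', worrying about the reversibility of the reduction --- has nothing to act on, since the entire configuration already is the tight core, and the object you describe analyzing is not the one in the lemma.

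More importantly, the proposal defers precisely the step that constitutes the lemma: identifying the deficient sets. For three pairwise-conflicting vertices with $|L(u_1)|\geq 1$, $|L(u_2)|\geq 2$, $|L(u_3)|\geq 2$ (or the permutation $2,1,2$), every singleton and every pair automatically satisfies Hall's condition, so the only possible violation is the full triple, namely $|L(u_1)\cup L(u_2)\cup L(u_3)|\leq 2$. This forces all three lists into a common pair $\{a,b\}$, with the two lists of size two equal to $\{a,b\}$ --- exactly the exceptional assignment indicated in \Cref{122 figure} --- and conversely that assignment is plainly not colorable, since three mutually conflicting vertices cannot be colored from two colors. Writing out this one-line computation is the whole proof; the extended case analysis, color-symmetry normalization, and reversible reduction you anticipate are not needed and, as described, would be applied to the wrong graph.
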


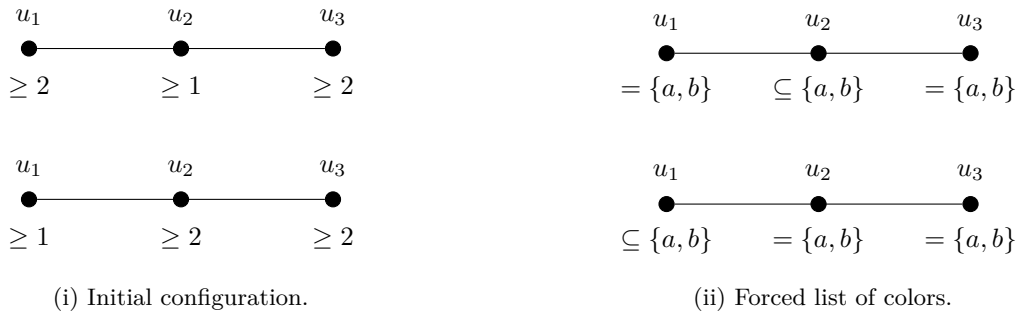
\begin{figure}[htbp]
\begin{center}
\subfigure[Initial configuration.]{
\begin{tikzpicture}{thick}
\begin{scope}[every node/.style={circle,draw,minimum size=1pt,inner sep=2}]
    \node[fill,label={above:$u_1$},label={below:$\geq 1$}] (1) at (0,0) {};
    \node[fill,label={above:$u_2$},label={below:$\geq 2$}] (2) at (2,0) {};
    \node[fill,label={above:$u_3$},label={below:$\geq 2$}] (3) at (4,0) {};
    
    \node[fill,label={above:$u_1$},label={below:$\geq 2$}] (10) at (0,2) {};
    \node[fill,label={above:$u_2$},label={below:$\geq 1$}] (20) at (2,2) {};
    \node[fill,label={above:$u_3$},label={below:$\geq 2$}] (30) at (4,2) {};
\end{scope}

\begin{scope}[every edge/.style={draw=black}]
    \path (1) edge (3);
    \path (10) edge (30);
\end{scope}
\end{tikzpicture}
}
\hfil
\subfigure[Forced list of colors.]{
\begin{tikzpicture}{thick}
\begin{scope}[every node/.style={circle,draw,minimum size=1pt,inner sep=2}]
    \node[fill,label={above:$u_1$},label={[label distance = -10pt]below:$\subseteq\{a,b\}$}] (1) at (0,0) {};
    \node[fill,label={above:$u_2$},label={[label distance = -10pt]below:$=\{a,b\}$}] (2) at (2,0) {};
    \node[fill,label={above:$u_3$},label={[label distance = -10pt]below:$=\{a,b\}$}] (3) at (4,0) {};
    
    \node[fill,label={above:$u_1$},label={[label distance = -10pt]below:$=\{a,b\}$}] (10) at (0,2) {};
    \node[fill,label={above:$u_2$},label={[label distance = -10pt]below:$\subseteq \{a,b\}$}] (20) at (2,2) {};
    \node[fill,label={above:$u_3$},label={[label distance = -10pt]below:$=\{a,b\}$}] (30) at (4,2) {};
\end{scope}

\begin{scope}[every edge/.style={draw=black}]
    \path (1) edge (3);
    \path (10) edge (30);
\end{scope}
\end{tikzpicture}
}
\caption{An useful non-colorable graph on three vertices.}
\label{122 figure}
\end{center}
\end{figure}

\begin{proof}
If $|L(u_1)\cup L(u_2)\cup L(u_3)|\geq 3$, then $u_1$, $u_2$, and
$u_3$ are easily colorable (by Hall's theorem by example). Thus, we
can assume without loss of generality (w.l.o.g. for short) that $L(u_i)\subseteq
\{a,b\}$ for all $1\leq i\leq 3$.
\end{proof}

\begin{lemma} \label{restriction lemma}
Let $H$ be a graph on $n\geq 4$ vertices $u_1,u_2,\dots,u_n$. Let the degree and adjacency of $u_1$, $u_2$, and $u_3$ be as depicted in \Cref{restriction figure}. Let $|L(u_1)|\geq 2$, $|L(u_2)|\geq 3$, and $|L(u_3)|\geq d^*_H(u_3)-1$. If, for every $x$ in $L(u_4)$, we have that $u_4,u_5,\dots,u_n$ are colorable with the respective lists $L(u_4)\setminus\{x\},L(u_5),L(u_6),\dots,L(u_n)$, then $H$ is colorable.
\end{lemma}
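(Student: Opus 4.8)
The plan is to first colour the triple $u_1,u_2,u_3$ with the help of \Cref{122 lemma}, and then to extend this partial colouring to $u_4,\dots,u_n$ by invoking the hypothesis. Reading off \Cref{restriction figure}, the three vertices $u_1,u_2,u_3$ pairwise see one another (so they must receive three distinct colours), while among $u_4,\dots,u_n$ only $u_4$ lies within distance $2$ of the triple, and it does so only through $u_3$. This is exactly the configuration in which colouring $u_3$ costs $u_4$ a single colour and costs the remaining vertices nothing, which is why the hypothesis is phrased as a robustness statement about $u_4$ alone.

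First I would colour $u_1,u_2,u_3$. Since $u_3$ sees $u_1$, $u_2$ and $u_4$, we have $d^*_H(u_3)\ge 3$, so the assumption $|L(u_3)|\ge d^*_H(u_3)-1$ yields $|L(u_3)|\ge 2$. Together with $|L(u_1)|\ge 2$ and $|L(u_2)|\ge 3$, the triple satisfies the hypotheses of \Cref{122 lemma} (the list sizes dominate those in its figure, which only helps the underlying system-of-distinct-representatives argument). Moreover the only obstruction allowed by that lemma is that all three lists coincide with a common two-element set, which is impossible here because $|L(u_2)|\ge 3$. Hence $u_1,u_2,u_3$ can be properly coloured; say $u_3$ receives colour $c$.

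Next I would extend the colouring to the sub-instance. Because $u_4$ is the only vertex of $\{u_4,\dots,u_n\}$ within distance $2$ of the coloured triple, and is within distance $2$ only of $u_3$, assigning $c$ to $u_3$ deletes exactly $c$ from $L(u_4)$ and leaves every other list $L(u_5),\dots,L(u_n)$ untouched. If $c\in L(u_4)$, the hypothesis applied with $x=c$ directly colours $u_4,\dots,u_n$. If $c\notin L(u_4)$, then the effective list of $u_4$ is the full list $L(u_4)\supseteq L(u_4)\setminus\{x\}$ for any $x$, and colourability with a larger list follows from colourability with a smaller one; so the hypothesis (for any single $x$) again finishes the job. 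In either case we obtain a proper colouring of all of $H$.

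The delicate points are the two bookkeeping facts read from \Cref{restriction figure}: that colouring the triple removes a colour only from $u_4$, and only one colour at that. The robustness form of the hypothesis (``for every $x\in L(u_4)$'') is precisely what lets us avoid trying to steer $u_3$'s colour: since that colour is dictated by the colouring of the triple and we have no a priori control over which colour lands on $u_3$, the sub-instance must be able to absorb the loss of an arbitrary colour at $u_4$. The role of the numerical assumptions is then transparent: $|L(u_2)|\ge 3$ defeats the \Cref{122 lemma} obstruction, while $|L(u_1)|\ge 2$ and $|L(u_3)|\ge d^*_H(u_3)-1\ge 2$ supply just enough room to three-colour the triple.
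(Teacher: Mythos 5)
There is a genuine gap, and it lies in the extension step: your bookkeeping claim about \Cref{restriction figure} is wrong. In a $2$-distance colouring, $u_4$ sees \emph{both} $u_3$ (at distance $1$) and $u_2$ (at distance $2$, through $u_3$), so colouring the triple first removes \emph{two} distinct colours from $L(u_4)$, namely $\phi(u_2)$ and $\phi(u_3)$ (distinct because $u_2$ and $u_3$ are adjacent). Moreover $u_4$ is drawn white, so it may have further neighbours among $u_5,\dots,u_n$; each such neighbour is at distance $2$ from $u_3$ and therefore also loses the colour $\phi(u_3)$ from its list. The hypothesis only guarantees colourability of $u_4,\dots,u_n$ after deleting a \emph{single} colour from $L(u_4)$ and leaving $L(u_5),\dots,L(u_n)$ intact, so it cannot absorb the damage your first step inflicts. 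Your reading ``only $u_4$ lies within distance $2$ of the triple, and only through $u_3$'' is exactly the point at which the argument fails.

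The paper's proof runs in the opposite order, and the order is the whole content of the lemma. One picks $x\in L(u_2)\setminus L(u_1)$ (possible since $|L(u_2)|>|L(u_1)|$ after trimming), invokes the hypothesis to colour $u_4,\dots,u_n$ with $u_4$ avoiding $x$, and only then colours the triple. The list-size accounting then works: $u_1$ sees no coloured vertex, $u_2$ sees only $u_4$, and $u_3$ sees only $u_4$ and its $d^*_H(u_3)-3$ other $2$-distance neighbours outside the triple, leaving lists of sizes at least $2,2,1$. By \Cref{122 lemma} the only obstruction would force $L(u_1)=L(u_2)$ at that stage, which is impossible because $x$ still lies in $L(u_2)\setminus L(u_1)$ ($u_4$ was forbidden from taking $x$). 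If you want to salvage your write-up, you must reverse the order of the two stages; as written, the second stage cannot be completed from the stated hypothesis.
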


\begin{figure}[htbp]
\begin{center}
\begin{tikzpicture}{thick}
\begin{scope}[every node/.style={circle,draw,minimum size=1pt,inner sep=2}]
    \node[fill,label={above:$u_1$},label={below:$\geq 2$}] (1) at (0,0) {};
    \node[fill,label={above:$u_2$},label={below:$\geq 3$}] (2) at (2,0) {};
    \node[fill,label={above:$u_3$},label={[label distance = -10pt]below:$\geq d^*-1$}] (3) at (4,0) {};
    \node[label={above:$u_4$}] (4) at (6,0) {};
\end{scope}

\begin{scope}[every edge/.style={draw=black}]
    \path (1) edge (4);
\end{scope}
\end{tikzpicture}
\caption{Graph $H$ from \Cref{restriction lemma}.}
\label{restriction figure}
\end{center}
\end{figure}

\begin{proof}
Suppose by contradiction that $H$ is not colorable. We remove the extra colors from $L(u_1)$ and $L(u_2)$ so that $|L(u_1)|=2$ and $|L(u_2)|=3$. We choose $x\in L(u_2)\setminus L(u_1)$. By hypothesis, there exists a coloring of $u_4,u_5,\dots,u_n$ where $u_4$ is not colored $x$. The remaining vertices, namely $u_1$, $u_2$, and $u_3$ must not be colorable. Since $|L(u_1)|\geq 2$, $|L(u_2)|\geq 3$, and $|L(u_3)|\geq d^*_H(u_3)-1$, after coloring $u_4,\dots,u_n$, the lists of available colors for $u_1$, $u_2$, and $u_3$ verify $|L(u_1)|\geq 2$, $|L(u_2)|\geq 2$, and $|L(u_3)|\geq 1$. Since they are not colorable, by \Cref{122 lemma}, $L(u_1)=L(u_2)$. However, this is impossible since $x\in L(u_2)\setminus L(u_1)$ initially and $x$ remains in $L(u_2)$ since $u_4$ was not colored $x$. 
\end{proof}

\begin{observation}
\Cref{restriction lemma} means that, by restricting the list $L(u_4)$ to $L(u_4)\setminus\{x\}$ for a well chosen color $x\in L(u_4)$, we can always color $u_1$, $u_2$, and $u_3$ last. As a result, if $H-\{u_1,u_2,u_3\}$ is colorable with $L'(u_4)$ where $|L'(u_4)|=|L(u_4)|-1$ and $L'(u_4)\subset L(u_4)$ ($L'(u_i)=L(u_i)$ for all $5\leq i\leq n$), then $H$ is colorable. From now on, for convenience, we will say that we restrict $u_4$ by one color to color $u_1$, $u_2$, and $u_3$ afterwards.
\end{observation}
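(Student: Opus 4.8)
The plan is to obtain the Observation not by a fresh argument but by reading off exactly what the proof of \Cref{restriction lemma} already establishes. Note that \Cref{restriction lemma} is stated with a hypothesis stronger than what its proof consumes: it asks that $u_4,\dots,u_n$ be colorable while forbidding \emph{every} color $x\in L(u_4)$ on $u_4$, whereas the proof only ever exploits one specific $x$. The aim is to show that forbidding that single color suffices, which is precisely the operational content of ``restrict $u_4$ by one color.''

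First I would pin down the critical color. Exactly as in the proof of \Cref{restriction lemma}, trim the lists so that $|L(u_1)|=2$ and $|L(u_2)|=3$; since $|L(u_2)|>|L(u_1)|$ we have $L(u_2)\not\subseteq L(u_1)$, so some color $x\in L(u_2)\setminus L(u_1)$ exists. This $x$ depends only on $L(u_1)$ and $L(u_2)$, which are fixed by the coloring of $G$ already present outside the configuration $H$; hence $x$ is the ``well chosen'' color, and $L'(u_4):=L(u_4)\setminus\{x\}$ is the intended one-color restriction (with $L'(u_i)=L(u_i)$ for $i\ge 5$).

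Next I would invoke the hypothesis of the Observation: with this $L'(u_4)$ the graph $H-\{u_1,u_2,u_3\}$ is colorable, so fix such a coloring; in it $u_4$ avoids $x$. Then I would recover $u_1,u_2,u_3$ verbatim from the proof of \Cref{restriction lemma}: once $u_4,\dots,u_n$ are colored the residual lists still satisfy $|L(u_1)|\ge 2$, $|L(u_2)|\ge 2$, and $|L(u_3)|\ge 1$, and because $u_4\ne x$ the color $x$ survives in $L(u_2)$ while remaining absent from $L(u_1)$, so $L(u_1)\ne L(u_2)$. By \Cref{122 lemma} the triple $u_1,u_2,u_3$ is then colorable, and appending these three colors extends the coloring to all of $H$.

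The one point I would be careful about — and the only place where the Observation is not a pure restatement — is the word ``well chosen.'' An \emph{arbitrary} deletion of a single color from $L(u_4)$ need not work, since removing a color irrelevant to $L(u_1)$ and $L(u_2)$ would leave the degenerate case $L(u_1)=L(u_2)$ of \Cref{122 lemma} intact. The argument must therefore commit to $x\in L(u_2)\setminus L(u_1)$, and I would make explicit that this color can be identified before $H$ is colored, directly from the already-fixed lists $L(u_1)$ and $L(u_2)$. With $x$ chosen this way the Observation becomes an immediate consequence of the computation already carried out for \Cref{restriction lemma}, rather than a new colorability claim.
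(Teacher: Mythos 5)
Your argument is correct and is essentially the paper's own (implicit) justification: the Observation carries no separate proof precisely because it is a direct unpacking of the proof of \Cref{restriction lemma}, which is exactly what you reproduce, down to the choice $x\in L(u_2)\setminus L(u_1)$ and the appeal to \Cref{122 lemma}. Your added remark that forbidding this single well-chosen color on $u_4$ already suffices (rather than every color of $L(u_4)$) is a correct sharpening, though in the paper's applications the restricted colorability is always established via cardinality bounds alone and hence holds for every one-color restriction anyway.
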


\begin{lemma} \label{colorable lemma}
The graphs depicted in \Cref{colorable figure} are all colorable.
\end{lemma}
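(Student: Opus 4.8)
The plan is to treat each configuration in \Cref{colorable figure} separately, in every case exhibiting an explicit ordering of the vertices and coloring them one at a time so that no vertex ever exhausts its list. The two workhorses are \Cref{restriction lemma} (with the observation that follows it) and \Cref{122 lemma}; everything else is a greedy argument exploiting that a $2$-vertex on a long path has a very small $2$-distance neighborhood. Since the lemma asserts colorability with \emph{no} exceptions, the goal in each case is to rule out the single bad pattern that \Cref{122 lemma} isolates, namely three mutually-seeing vertices all forced onto a common two-element list.

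First I would locate, inside each configuration, a triple matching the hypotheses of \Cref{restriction lemma}: a short path whose three relevant vertices carry list sizes $(\geq 2,\geq 3,\geq d^{*}-1)$. By the observation following that lemma, such a triple need not be coloured immediately — it suffices to restrict one neighbouring vertex by a single colour and colour the rest of the configuration, after which the triple can always be completed. This lets me peel off the hardest part and reduce each configuration to a strictly smaller one, which is either handled by a previous case or small enough to finish by hand. For the internal $2$-vertices of the various $k$-paths I would then colour greedily from one $3$-vertex endpoint toward the other: because $g(G)\geq 21$, the $2$-distance neighbourhood of each such $2$-vertex contains only a bounded number of vertices and, crucially, distinct paths meeting the same branch vertex do not create spurious $2$-distance adjacencies, so the advertised list sizes in the figure are exactly the ones available during the greedy sweep.

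The hard part will be the branch $3$-vertices of type $(k,l,m)$, where three paths meet and the list sizes are tightest. There I must check that restricting a single path by one colour — as licensed by \Cref{restriction lemma} — still leaves enough freedom to colour the other two paths together with the central vertex, and that any vertex seen simultaneously by two critical triples does not trigger the forbidden $\{a,b\}$-collapse of \Cref{122 lemma}. I expect the bulk of the verification to be this bookkeeping of overlapping $2$-distance neighbourhoods at the branch vertices; the girth bound is precisely what keeps these overlaps harmless, so the argument should reduce, case by case, to applying the two preceding lemmas and then colouring the remaining $2$-vertices greedily.
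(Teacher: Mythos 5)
There is a genuine gap here, and it starts with a misreading of what the lemma is about. The graphs of \Cref{colorable figure} are small, standalone list-coloring instances (paths with a few pendant vertices, to be $2$-distance colored from the indicated lists); the girth of $G$ and the geometry of $k$-paths and branch vertices play no role in this lemma at all — they only enter later, in \Cref{reducible pair,reducible triple}, when these colorable graphs are matched against subconfigurations of $G$. Your plan of "peel off a triple via \Cref{restriction lemma}, then sweep greedily along the paths, using $g(G)\geq 21$ to keep $2$-distance neighbourhoods disjoint" therefore attacks the wrong object, and the greedy part fails on its own terms: the list sizes are exactly tight. Already in configuration (i), with sizes $2,2,3,2$, a left-to-right greedy sweep can leave the last vertex with an empty list; the paper instead splits on whether $L(u_1)=L(u_2)$ and in each branch picks a color from a set difference so that one vertex loses nothing.

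Two further ideas that the actual proof relies on are absent from your proposal. First, several configurations carry inclusion side-conditions ($L(u_1)\subseteq L(u_2)$, $L(u'_3)\subseteq L(u_3)$, etc.) that are part of the statement and are \emph{necessary} for colorability; any argument that never invokes them cannot be correct, since without them some of these configurations admit list assignments with no valid coloring. Second, the recurring trick in cases (ii), (iv), (vi) and (viii) is a pigeonhole step: two vertices that do not see each other but have lists totalling more than four colors must share a color, and assigning that common color to both simultaneously is what saves enough capacity to finish — often by reducing to an earlier, smaller configuration. The paper's proof is a chain of such reductions ((ii) to (i), (v) to (iii) and (i), (x) to (iv), and so on), combined with \Cref{restriction lemma} and \Cref{122 lemma}; the restriction lemma alone, plus greediness, does not suffice.
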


\begin{figure}[htbp]
\begin{center}
\subfigure[\label{2232}]{
\begin{tikzpicture}[scale=0.55]{thick}
\begin{scope}[every node/.style={circle,draw,minimum size=1pt,inner sep=2}]
    \node[fill,label={above:$u_1$},label={below:$\geq 2$}] (1) at (0,0) {};
    \node[fill,label={above:$u_2$},label={below:$\geq 2$}] (2) at (2,0) {};
    \node[fill,label={above:$u_3$},label={below:$\geq 3$}] (3) at (4,0) {};
    \node[fill,label={above:$u_4$},label={below:$\geq 2$}] (4) at (6,0) {};
\end{scope}

\begin{scope}[every edge/.style={draw=black}]
    \path (1) edge (4);
\end{scope}
\end{tikzpicture}
}
\hfil
\subfigure[\label{2343}]{
\begin{tikzpicture}[scale=0.55]{thick}
\begin{scope}[every node/.style={circle,draw,minimum size=1pt,inner sep=2}]
    \node[fill,label={above:$u_1$},label={below:$\geq 2$}] (1) at (0,0) {};
    \node[fill,label={[label distance = +4pt]above right:$u_2$},label={below:$\geq 3$}] (2) at (2,0) {};
    \node[fill,label={[label distance = +4pt]above right:$u_3$},label={below:$\geq 4$}] (3) at (4,0) {};
    \node[fill,label={above:$u_4$},label={below:$\geq 3$}] (4) at (6,0) {};
    
    \node[fill,label={above:$u'_2$},label={below left:$\geq 3$}] (2') at (2,2) {};
    \node[fill,label={above:$u'_3$},label={below left:$\geq 3$}] (3') at (4,2) {};
\end{scope}

\begin{scope}[every edge/.style={draw=black}]
    \path (1) edge (4);
    \path (2) edge (2');
    \path (3) edge (3');
\end{scope}
\end{tikzpicture}
}
\hfil
\subfigure[\label{22332}]{
\begin{tikzpicture}[scale=0.55]{thick}
\begin{scope}[every node/.style={circle,draw,minimum size=1pt,inner sep=2}]
    \node[fill,label={above:$u_1$},label={below:$\geq 2$}] (1) at (0,0) {};
    \node[fill,label={above:$u_2$},label={below:$\geq 2$}] (2) at (2,0) {};
    \node[fill,label={above:$u_3$},label={below:$\geq 3$}] (3) at (4,0) {};
    \node[fill,label={above:$u_4$},label={below:$\geq 3$}] (4) at (6,0) {};
    \node[fill,label={above:$u_5$},label={below:$\geq 2$}] (5) at (8,0) {};
\end{scope}

\begin{scope}[every edge/.style={draw=black}]
    \path (1) edge (5);
\end{scope}
\end{tikzpicture}
}
\hfil
\subfigure[\label{23432} $L(u_1)\subseteq L(u_2)$ and $L(u_5)\subseteq L(u_4)$.]{
\begin{tikzpicture}[scale=0.6]{thick}
\begin{scope}[every node/.style={circle,draw,minimum size=1pt,inner sep=2}]
    \node[fill,label={above:$u_1$},label={below:$\geq 2$}] (1) at (0,0) {};
    \node[fill,label={above:$u_2$},label={below:$\geq 3$}] (2) at (2,0) {};
    \node[fill,label={[label distance = +4pt]above right:$u_3$},label={below:$\geq 4$}] (3) at (4,0) {};
    \node[fill,label={above:$u_4$},label={below:$\geq 3$}] (4) at (6,0) {};
    \node[fill,label={above:$u_5$},label={below:$\geq 2$}] (5) at (8,0) {};
    
    \node[fill,label={above:$u'_3$},label={below left:$\geq 2$}] (3') at (4,2) {};
\end{scope}

\begin{scope}[every edge/.style={draw=black}]
    \path (1) edge (5);
    \path (3) edge (3');
\end{scope}
\end{tikzpicture}
}
\hfil
\subfigure[\label{23342}]{
\begin{tikzpicture}[scale=0.7]{thick}
\begin{scope}[every node/.style={circle,draw,minimum size=1pt,inner sep=2}]
    \node[fill,label={above:$u_1$},label={below:$\geq 2$}] (1) at (0,0) {};
    \node[fill,label={above:$u_2$},label={below:$\geq 3$}] (2) at (2,0) {};
    \node[fill,label={[label distance = +4pt]above right:$u_3$},label={below:$\geq 3$}] (3) at (4,0) {};
    \node[fill,label={above:$u_4$},label={below:$\geq 4$}] (4) at (6,0) {};
    \node[fill,label={above:$u_5$},label={below:$\geq 2$}] (5) at (8,0) {};
    
    \node[fill,label={above:$u'_3$},label={below left:$\geq 2$}] (3') at (4,2) {};
\end{scope}

\begin{scope}[every edge/.style={draw=black}]
    \path (1) edge (5);
    \path (3) edge (3');
\end{scope}
\end{tikzpicture}
}
\hfil
\subfigure[\label{22432}]{
\begin{tikzpicture}[scale=0.7]{thick}
\begin{scope}[every node/.style={circle,draw,minimum size=1pt,inner sep=2}]
    \node[fill,label={above:$u_1$},label={below:$\geq 2$}] (1) at (0,0) {};
    \node[fill,label={above:$u_2$},label={below:$\geq 2$}] (2) at (2,0) {};
    \node[fill,label={[label distance = +4pt]above right:$u_3$},label={below:$\geq 4$}] (3) at (4,0) {};
    \node[fill,label={above:$u_4$},label={below:$\geq 3$}] (4) at (6,0) {};
    \node[fill,label={above:$u_5$},label={below:$\geq 2$}] (5) at (8,0) {};
    
    \node[fill,label={above:$u'_3$},label={below left:$\geq 3$}] (3') at (4,2) {};
\end{scope}

\begin{scope}[every edge/.style={draw=black}]
    \path (1) edge (5);
    \path (3) edge (3');
\end{scope}
\end{tikzpicture}
}
\hfil
\subfigure[\label{24343}$L(u'_3)\subseteq L(u_3)$.]{
\begin{tikzpicture}[scale=0.7]{thick}
\begin{scope}[every node/.style={circle,draw,minimum size=1pt,inner sep=2}]
    \node[fill,label={above:$u_1$},label={below:$\geq 2$}] (1) at (0,0) {};
    \node[fill,label={above:$u_2$},label={below:$\geq 4$}] (2) at (2,0) {};
    \node[fill,label={[label distance = +4pt]above right:$u_3$},label={below:$\geq 3$}] (3) at (4,0) {};
    \node[fill,label={[label distance = +4pt]above right:$u_4$},label={below:$\geq 4$}] (4) at (6,0) {};
    \node[fill,label={above:$u_5$},label={below:$\geq 3$}] (5) at (8,0) {};
    
    \node[fill,label={above:$u'_3$},label={below left:$\geq 2$}] (3') at (4,2) {};
    \node[fill,label={above:$u'_4$},label={below left:$\geq 3$}] (4') at (6,2) {};
\end{scope}

\begin{scope}[every edge/.style={draw=black}]
    \path (1) edge (5);
    \path (3) edge (3');
    \path (4) edge (4');
\end{scope}
\end{tikzpicture}
}
\hfil
\subfigure[\label{23432bis}$L(u_1)\subseteq L(u_2)$, $L(u''_3)\subseteq L(u'_3)$, and $L(u_5)\subseteq L(u_4)$.]{
\begin{tikzpicture}[scale=0.7]{thick}
\begin{scope}[every node/.style={circle,draw,minimum size=1pt,inner sep=2}]
    \node[fill,label={above:$u_1$},label={below:$\geq 2$}] (1) at (0,0) {};
    \node[fill,label={above:$u_2$},label={below:$\geq 3$}] (2) at (2,0) {};
    \node[fill,label={[label distance = +4pt]above right:$u_3$},label={below:$\geq 4$}] (3) at (4,0) {};
    \node[fill,label={above:$u_4$},label={below:$\geq 3$}] (4) at (6,0) {};
    \node[fill,label={above:$u_5$},label={below:$\geq 2$}] (5) at (8,0) {};
    
    \node[fill,label={[label distance = +4pt]above right:$u'_3$},label={below left:$\geq 3$}] (3') at (4,2) {};
    \node[fill,label={above:$u''_3$},label={below left:$\geq 2$}] (3'') at (4,4) {};
\end{scope}

\begin{scope}[every edge/.style={draw=black}]
    \path (1) edge (5);
    \path (3) edge (3'');
\end{scope}
\end{tikzpicture}
}
\hfil
\subfigure[\label{224322}]{
\begin{tikzpicture}[scale=0.7]{thick}
\begin{scope}[every node/.style={circle,draw,minimum size=1pt,inner sep=2}]
    \node[fill,label={above:$u_1$},label={below:$\geq 2$}] (1) at (0,0) {};
    \node[fill,label={above:$u_2$},label={below:$\geq 2$}] (2) at (2,0) {};
    \node[fill,label={above:$u_3$},label={below:$\geq 4$}] (3) at (4,0) {};
    \node[fill,label={above:$u_4$},label={below:$\geq 3$}] (4) at (6,0) {};
    \node[fill,label={above:$u_5$},label={below:$\geq 2$}] (5) at (8,0) {};
    \node[fill,label={above:$u_6$},label={below:$\geq 2$}] (6) at (10,0) {};
\end{scope}

\begin{scope}[every edge/.style={draw=black}]
    \path (1) edge (6);
\end{scope}
\end{tikzpicture}
}
\end{center}
\end{figure}

\begin{figure}[htbp]
\begin{center}
\subfigure[\label{234422}$L(u_1)\subseteq L(u_2)$ and $L(u''_3)\subseteq L(u'_3)$.]{
\begin{tikzpicture}[scale=0.7]{thick}
\begin{scope}[every node/.style={circle,draw,minimum size=1pt,inner sep=2}]
    \node[fill,label={above:$u_1$},label={below:$\geq 2$}] (1) at (0,0) {};
    \node[fill,label={above:$u_2$},label={below:$\geq 3$}] (2) at (2,0) {};
    \node[fill,label={[label distance = +4pt]above right:$u_3$},label={below:$\geq 4$}] (3) at (4,0) {};
    \node[fill,label={above:$u_4$},label={below:$\geq 4$}] (4) at (6,0) {};
    \node[fill,label={above:$u_5$},label={below:$\geq 2$}] (5) at (8,0) {};
    \node[fill,label={above:$u_6$},label={below:$\geq 2$}] (6) at (10,0) {};
    
    \node[fill,label={[label distance = +4pt]above right:$u'_3$},label={below left:$\geq 3$}] (3') at (4,2) {};
    \node[fill,label={above:$u''_3$},label={below left:$\geq 2$}] (3'') at (4,4) {};
\end{scope}

\begin{scope}[every edge/.style={draw=black}]
    \path (1) edge (6);
    \path (3) edge (3'');
\end{scope}
\end{tikzpicture}
}
\hfil
\subfigure[\label{2334422}]{
\begin{tikzpicture}[scale=0.7]{thick}
\begin{scope}[every node/.style={circle,draw,minimum size=1pt,inner sep=2}]
    \node[fill,label={above:$u_1$},label={below:$\geq 2$}] (1) at (0,0) {};
    \node[fill,label={above:$u_2$},label={below:$\geq 3$}] (2) at (2,0) {};
    \node[fill,label={[label distance = +4pt]above right:$u_3$},label={below:$\geq 3$}] (3) at (4,0) {};
    \node[fill,label={above:$u_4$},label={below:$\geq 4$}] (4) at (6,0) {};
    \node[fill,label={above:$u_5$},label={below:$\geq 4$}] (5) at (8,0) {};
    \node[fill,label={above:$u_6$},label={below:$\geq 2$}] (6) at (10,0) {};
    \node[fill,label={above:$u_7$},label={below:$\geq 2$}] (7) at (12,0) {};
    
    \node[fill,label={above:$u'_3$},label={below left:$\geq 2$}] (3') at (4,2) {};
\end{scope}

\begin{scope}[every edge/.style={draw=black}]
    \path (1) edge (7);
    \path (3) edge (3');
\end{scope}
\end{tikzpicture}
}
\hfil
\subfigure[\label{2244343}$L(u'_5)\subseteq L(u_5)$.]{
\begin{tikzpicture}[scale=0.7]{thick}
\begin{scope}[every node/.style={circle,draw,minimum size=1pt,inner sep=2}]
    \node[fill,label={above:$u_1$},label={below:$\geq 2$}] (1) at (0,0) {};
    \node[fill,label={above:$u_2$},label={below:$\geq 2$}] (2) at (2,0) {};
    \node[fill,label={above:$u_3$},label={below:$\geq 4$}] (3) at (4,0) {};
    \node[fill,label={above:$u_4$},label={below:$\geq 4$}] (4) at (6,0) {};
    \node[fill,label={[label distance = +4pt]above right:$u_5$},label={below:$\geq 3$}] (5) at (8,0) {};
    \node[fill,label={[label distance = +4pt]above right:$u_6$},label={below:$\geq 4$}] (6) at (10,0) {};
    \node[fill,label={above:$u_7$},label={below:$\geq 3$}] (7) at (12,0) {};
    
    \node[fill,label={above:$u'_5$},label={below left:$\geq 2$}] (5') at (8,2) {};
    \node[fill,label={above:$u'_6$},label={below left:$\geq 3$}] (6') at (10,2) {};
\end{scope}

\begin{scope}[every edge/.style={draw=black}]
    \path (1) edge (7);
    \path (5) edge (5');
    \path (6) edge (6');
\end{scope}
\end{tikzpicture}
}
\caption{Colorable graphs.}
\label{colorable figure}
\end{center}
\end{figure}

\begin{proof}
In the following proofs, whenever the size of a list $|L(u)|\geq i$, we assume that $|L(u)|=i$ by removing the extra colors from the list while preserving the inclusions.

\begin{itemize}
\item[(i)] If $L(u_1)=L(u_2)$, then we color $u_3$ with a color in $L(u_3)\setminus L(u_2)$, followed by $u_4$, $u_2$, and $u_1$ in this order. If $L(u_1)\neq L(u_2)$, then we color $u_2$ with a color in $L(u_2)\setminus L(u_1)$, followed by $u_4$, $u_3$, and $u_1$ in this order.

\item[(ii)] Since $|L(u_1)|\geq 2$, $|L(u'_3)|\geq 3$, and both $L(u_1)$ and $L(u'_3)$ are contained in $\{a,b,c,d\}$, we have a color $x\in L(u_1)\cap L(u'_3)$ by the pigeonhole principle. We color $u_1$ and $u'_3$ with $x$, then $u'_2$, $u_2$, $u_3$, and $u_4$ are colorable by \Cref{2232}. 

\item[(iii)] We restrict $u_2$ by one color. Then, we color $u_2$ and $u_1$ in this order first. By \Cref{restriction lemma}, we color $u_3$, $u_4$, and $u_5$ last. 

\item[(iv)] If $L(u_1)$ and $L(u'_3)$ share a common color $x$, then we color $u_1$ and $u'_3$ with $x$. The remaining vertices $u_5$, $u_4$, $u_3$, and $u_2$ are colorable by \Cref{2232}. So, $L(u_1)\cap L(u'_3)=\emptyset$ and by symmetry, we also have $L(u_5)\cap L(u'_3)=\emptyset$. 

W.l.o.g. we set $L(u'_3)=\{a,b\}$. As a result, $L(u_1)=L(u_5)=\{c,d\}$. Recall that $L(u_1)\subseteq L(u_2)$ and $L(u_5)\subseteq L(u_4)$. So, we can color $u_1$ and $u_4$ with $c$, $u_2$ and $u_5$ with $d$, $u_3$ with $a$, and $u'_3$ with $b$.

\item[(v)] We have $L(u'_3)\subseteq L(u_3)$. Otherwise, we can color $u'_3$ with a color in $L(u'_3)\setminus L(u_3)$, then $u_1$, $u_2$, $u_3$, $u_4$, and $u_5$ are colorable by \Cref{22332}.

If $L(u_1)$ and $L(u'_3)$ share a common color $x$, then we color $u_1$ and $u'_3$ with $x$, and $u_2$, $u_3$, $u_4$, and $u_5$ are colorable by \Cref{2232}. 

If $L(u_1)\cap L(u'_3)=\emptyset$, then w.l.o.g. we set $L(u_1)=\{a,b\}$ and $L(u'_3)=\{c,d\}$. Since $|L(u_2)|\geq 3$, w.l.o.g. we color $u_2$ with $a$ then $u_1$ with $b$. As both $L(u'_3)$ and $L(u_3)$ contain $\{c,d\}$, we still have $|L(u'_3)|\geq 2$ and $|L(u_3)|\geq 2$, thus $u'_3$, $u_3$, $u_4$, and $u_5$ are colorable by \Cref{2232}. 

\item[(vi)] By the pigeonhole principle, there exists $x\in L(u'_3)\cap L(u_5)$. If $x\notin L(u_2)$, then we color $u'_3$ and $u_5$ with $x$. The remaining vertices $u_1$, $u_2$, $u_3$, and $u_4$ are colorable by \Cref{2232}. So $x\in L(u_2)$.

We also have $L(u_1)=L(u_2)$. Otherwise, we color $u_2$ with a color in $L(u_2)\setminus L(u_1)$, then $u_5$, $u_4$, $u_3$, $u'_3$ are colorable by \Cref{2232}, and we finish by coloring $u_1$.

Since $x\in L(u'_3)\cap L(u_5)\cap L(u_2) \cap L(u_1)$, we color $u_1$, $u'_3$, and $u_5$ with $x$, then we color $u_2$, $u_4$, and $u_3$ in this order.

\item[(vii)] If there exists $x\in L(u_3)\setminus L(u_5)$, then we color $u_3$ with $x$, then $u'_3$, $u_1$, $u_2$, $u_4$, $u'_4$, and $u_5$ in this order.

If $L(u_3)=L(u_5)$, then we color $u_4$ with a color $y$ in $L(u_4)\setminus L(u_5)$. Recall that $L(u'_3)\subseteq L(u_3)$, so $y\notin L(u'_3)\cup L(u_3)\cup L(u_5)$. We color $u_1$, $u_2$, $u_3$, and $u'_3$ by \Cref{2232}. Finally, we finish by color $u'_4$ and $u_5$ in this order.

\item[(viii)] If there exists two same sets of colors between $L(u_2)$, $L(u_4)$, and $L(u'_3)$, say $L(u_2)=L(u_4)$, then we color $u_3$ with $x\in L(u_3)\setminus L(u_2)$. Recall that $L(u_1)\subseteq L(u_2)$ and $L(u_5)\subseteq L(u_4)$ so $x\notin L(u_1)\cup L(u_2)\cup L(u_4)\cup L(u_5)$. We finish by coloring $u''_3$, $u'_3$, $u_1$, $u_2$, $u_4$, $u_5$ in this order.

If $L(u_2)$, $L(u_4)$, and $L(u'_3)$ are all different, then we color the graph as follows. By the pigeonhole principle, two sets between $L(u_1)$, $L(u_5)$, and $L(u''_3)$ must share a common color, say $L(u_1)\cap L(u_5)\neq \emptyset$. In other words, $|L(u_1)\cup L(u_5)|\leq 3$. Then, we color $u_3$ with a color in $L(u_3)\setminus (L(u_1)\cup L(u_5))$. We color $u''_3$ and $u'_3$ in this order. Now, we can color $u_2$ and $u_4$ since they see the same two colors but initially $L(u_2)\neq L(u_4)$. Finally, we finish by coloring $u_1$ and $u_5$.

\item[(ix)] If $L(u_1)=L(u_2)$, then we restrict $L(u_3)$ to $L(u_3)\setminus L(u_2)$. We color $u_3$, $u_4$, $u_5$, $u_6$ by \Cref{2232}, then we finish by coloring $u_2$ and $u_1$ in this order.

If there exists $x\in L(u_1)\setminus L(u_2)$, then we color $u_1$ with $x$. Finally, $u_6$, $u_5$, $u_4$, $u_3$, and $u_2$ are colorable by \Cref{22332}.

\item[(x)] If $L(u_5)=L(u_6)$, then we restrict $L(u_4)$ to $L(u_4)\setminus L(u_5)$. Recall that we have $L(u_1)\subseteq L(u_2)$ and $L(u''_3)\subseteq L(u'_3)$. We can thus color $u_1$, $u_2$, $u_3$, $u_4$, $u'_3$, and $u''_3$ by \Cref{23432}. We finish by coloring $u_5$ and $u_6$ in this order.

If there exists $a\in L(u_6)\setminus L(u_5)$, then we color $u_6$ with $a$. Observe that $L(u_5)\subseteq \{b,c,d\} = L(u_4)$ after we color $u_6$ with $a$. Recall that we also have $L(u_1)\subseteq L(u_2)$ and $L(u''_3)\subseteq L(u'_3)$. So, we color the remaining vertices $u_1$, $u_2$, $u_3$, $u'_3$, $u''_3$, $u_4$, and $u_5$ by \Cref{23432bis} 

\item[(xi)] If $L(u_6)=L(u_7)$, then we restrict $L(u_5)$ to $L(u_5)\setminus L(u_6)$. We color $u_1$, $u_2$, $u_3$, $u'_3$, $u_4$, and $u_5$ by \Cref{23342}. We finish by coloring $u_6$ and $u_7$ in this order.

If there exists $ x\in L(u_7)\setminus L(u_6)$, then we color $u_7$ with $x$. We restrict $u_3$ by one color to color $u_4$, $u_5$, and $u_6$ last by \Cref{restriction lemma}. Then, $u'_3$, $u_3$, $u_2$, and $u_1$ are colorable by \Cref{2232}.

\item[(xii)] If $L(u_1)=L(u_2)$, then we restrict $L(u_3)$ to $L(u_3)\setminus L(u_2)$. We color $u_3$, $u_4$, $u_5$, $u'_5$, $u_6$, $u'_6$, and $u_7$ by \Cref{24343}. Then, we finish by coloring $u_2$ and $u_1$ in this order.

If there exists $x\in L(u_2)\setminus L(u_1)$, then we color $u_2$ with $x$. We color $u'_5$, $u_5$, $u_4$, $u_6$, $u'_6$, and $u_7$ by \Cref{2343}. Finally, we finish by coloring $u_3$ and $u_1$ in this order. 
\end{itemize}
 
\end{proof}

\subsection{Structural properties of \texorpdfstring{$G$}{G}\label{tutu}}

\begin{lemma}\label{connected}
Graph $G$ is connected.
\end{lemma}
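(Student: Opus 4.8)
The plan is to argue by minimality of $G$. Suppose, for contradiction, that $G$ is disconnected, and let $G_1, G_2, \dots, G_p$ (with $p \geq 2$) be its connected components. Each $G_i$ is a proper subgraph of $G$, hence has strictly fewer vertices and edges. Moreover, each $G_i$ inherits the relevant structural hypotheses: it is planar (as a subgraph of a planar graph), subcubic (as a subgraph of a subcubic graph, so $\Delta(G_i) \leq 3$), and of girth at least $21$. The last point holds because any cycle of $G_i$ is a cycle of $G$, so a shortest cycle of $G_i$ is at least as long as a shortest cycle of $G$; and if $G_i$ is acyclic, its girth is infinite, which is also at least $21$.

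Since $G$ is a counterexample to \Cref{main theorem} with the fewest vertices and edges, no strictly smaller planar subcubic graph of girth at least $21$ can be a counterexample. Therefore each component $G_i$ admits a $2$-distance $4$-coloring $\phi_i$.

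It then remains to glue these colorings together. Define $\phi$ on $V(G)$ by setting $\phi(v) = \phi_i(v)$ whenever $v \in V(G_i)$. I claim $\phi$ is a $2$-distance $4$-coloring of $G$. Indeed, take any two distinct vertices $u$ and $v$ at distance at most $2$ in $G$. A path of length at most $2$ joining $u$ and $v$ stays within a single connected component, so $u$ and $v$ lie in the same $G_i$, where $\phi_i$ already guarantees $\phi(u) = \phi_i(u) \neq \phi_i(v) = \phi(v)$. Hence $\phi$ is a valid $2$-distance $4$-coloring of $G$, contradicting the assumption that $G$ is a counterexample; therefore $G$ must be connected.

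I expect no serious obstacle here, as this is a routine minimality reduction. The only points requiring any care are verifying that the girth hypothesis is preserved when passing to connected components — which holds since cycles are confined to single components and acyclic components have infinite girth — and noting that two vertices in distinct components are at infinite distance, so the per-component colorings can never conflict.
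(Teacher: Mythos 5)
Your proof is correct and follows essentially the same minimality argument as the paper, which simply notes that a component of $G$ would otherwise be a smaller counterexample; you have merely written out the details (hypotheses inherited by components, gluing the per-component colorings) explicitly.
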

\begin{proof}
Otherwise a component of $G$ would be a smaller counterexample.
\end{proof}

\begin{lemma}\label{minimumDegree}
The minimum degree of $G$ is at least 2.
\end{lemma}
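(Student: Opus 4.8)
The plan is to argue by minimality, just as in the proof that $G$ is connected. The statement to prove is that $G$ has minimum degree at least $2$, i.e. that $G$ contains no vertex of degree $0$ or $1$. Since $G$ is a connected subcubic counterexample on at least one edge, a degree-$0$ vertex is immediately ruled out by connectivity (it would be an isolated vertex, so either $G$ is a single point and trivially colorable, or $G$ is disconnected, contradicting \Cref{connected}). So the real content is ruling out a vertex of degree exactly $1$.

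\textbf{The reduction step.} Suppose for contradiction that $G$ contains a $1$-vertex $u$, and let $v$ be its unique neighbor. Consider the graph $G' = G - u$. Since $G'$ has strictly fewer vertices and edges than $G$, and it is still a planar subcubic graph of girth at least $21$ (removing a vertex cannot decrease the girth nor increase the maximum degree), it is not a counterexample: by minimality $G'$ admits a $2$-distance $4$-coloring $\phi$. I would then argue that $\phi$ extends to $u$. The only constraints on the color of $u$ come from the vertices at distance at most $2$ from $u$ in $G$. These are exactly $v$ and the other neighbors of $v$: since $d_G(u)=1$, every vertex within distance $2$ of $u$ passes through $v$, so $N^*_G(u) = \{v\} \cup (N_G(v)\setminus\{u\})$. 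As $d_G(v)\le \Delta = 3$, we have $v$ together with at most two further neighbors, hence $d^*_G(u) \le 1 + 2 = 3$. Therefore $u$ sees at most $3$ colors, and since we have $4$ colors available, there remains at least one free color for $u$. Coloring $u$ with such a color yields a $2$-distance $4$-coloring of $G$, contradicting the assumption that $G$ is a counterexample.

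\textbf{The main obstacle,} such as it is, is bookkeeping the $2$-distance neighborhood rather than any genuine difficulty: one must be careful that the constraints on $u$ are precisely the colors appearing on $v$ and on $v$'s other neighbors, and that $d^*_G(u)\le 3 < 4$ regardless of whether $v$ has degree $2$ or $3$. I do not expect to need the more elaborate coloring lemmas (\Cref{122 lemma}, \Cref{restriction lemma}, or \Cref{colorable lemma}) here; this is one of the simplest reducible configurations, handled by a direct greedy extension. The same template — delete a small configuration, invoke minimality to color the rest, then extend by counting the colors seen — will recur throughout \Cref{tutu}, but for a $1$-vertex the count is immediate, so the proof is short.
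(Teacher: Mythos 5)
Your proof is correct and follows essentially the same route as the paper: rule out degree 0 via connectivity, then delete the degree-1 vertex (the paper deletes its unique incident edge instead, an immaterial difference), color the rest by minimality, and extend greedily since the vertex sees at most $3$ colors. The constraint count $d^*_G(u)\le 3<4$ is exactly the paper's ``at most 3 constraints and 4 colors.''
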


\begin{proof}
By \Cref{connected}, the minimum degree is at least 1 or $G$ would be a single isolated vertex which is 4-colorable. If $G$ contains a degree 1 vertex $v$, then we can simply remove the unique edge incident to $v$ and 2-distance color the resulting graph, which is possible by minimality of $G$. Then, we add the edge back and color $v$ (at most 3 constraints and 4 colors).
\end{proof}

\begin{lemma}[\cite{bi12bis} Lemmas 10,11, and 12] \label{bi}
Graph $G$ has no:
\begin{enumerate}
\item[(i)] $6^+$-paths
\item[(ii)] $(1^+,4^+,5^+)$
\item[(iii)] $(2^+,3^+,4^+)$
\item[(iv)] $(3^+,3^+,3^+)$
\item[(v)] $(330-045)$
\item[(vi)] $(431-133)$
\end{enumerate}
\end{lemma}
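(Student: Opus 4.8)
The plan is to prove each part by the standard reducibility argument attached to a minimal counterexample: assuming $G$ contains the configuration, delete a well-chosen set $S$ of vertices, invoke the minimality of $G$ to $2$-distance $4$-color $G-S$, and then argue that the partial coloring always extends to $S$, contradicting that $G$ is uncolorable. In every case $S$ will consist of the $2$-vertices lying on the paths of the configuration together with the branch/interior $3$-vertices, chosen so that $G-S$ is a strictly smaller subcubic planar graph of girth at least $21$ (deleting vertices cannot lower the girth), hence colorable by minimality. All the difficulty is concentrated in the extension step.

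For the extension I would bound from below, for each deleted vertex $v$, the size of its list $L(v)$ of still-available colors, that is $4$ minus the number of distinct colors already used on the $2$-distance neighbors of $v$ that survive in $G-S$. Here the hypothesis $g(G)\ge 21$ is essential: it forces the neighborhood of the configuration to be tree-like, so that none of the relevant vertices coincide and no unexpected edge appears, and therefore the counts are exactly those one reads off the drawings. The resulting bounds are uniform: a deleted $2$-vertex adjacent to a surviving $3$-vertex $w$ sees only $w$ and its two other neighbors, giving $|L(v)|\ge 1$; the next $2$-vertex along the same path sees only $w$, at distance two, giving $|L(v)|\ge 3$; a $2$-vertex all of whose $2$-distance neighbors are deleted has $|L(v)|\ge 4$; and a deleted interior $3$-vertex sees only deleted (hence uncolored) vertices, so it too keeps a full list. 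Carrying out this bookkeeping turns each configuration into one of the list-coloring instances of \Cref{colorable lemma}.

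Concretely, for the linear configuration (i) I would delete the internal $2$-vertices of the $6^+$-path and read off the list sizes along the path (dropping to $1$ at each end, $4$ in the interior), which is one of the path instances of \Cref{colorable lemma}; its colorability finishes the case. For the branching configurations (ii)--(iv), deleting the central $3$-vertex together with all the $2$-vertices of its three paths produces a spider rather than a path, so I would first peel off the shortest branch using the restriction observation following \Cref{restriction lemma}: reserving one color at the junction lets that three-vertex tail be colored last, which collapses the spider to a single path carrying the lists of the two longer branches, again an instance of \Cref{colorable lemma}. The compound configurations (v) $(330-045)$ and (vi) $(431-133)$ are treated identically, deleting the $2$-vertices of all the paths and the interior $3$-vertices; the inclusions such as $L(u_1)\subseteq L(u_2)$ demanded by the relevant sub-figures of \Cref{colorable lemma} hold because an end $2$-vertex, being attached to a surviving $3$-vertex, forbids every color that its path-neighbor forbids and in addition the colors of that $3$-vertex's other neighbors, and \Cref{122 lemma} then settles the few genuinely tight subcases.

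The step I expect to be the main obstacle is precisely this matching: getting the list-size lower bounds exactly right at the boundary of $S$, where a deleted vertex may simultaneously see surviving vertices and deleted-but-uncolored vertices, and verifying that the \emph{inclusion} hypotheses required by \Cref{colorable lemma} genuinely hold rather than merely the cardinality bounds. This is where the girth assumption does the real work and where the reductions to the shapes of Figure~\ref{colorable figure} must be assembled carefully, typically by ordering the coloring so that branches and path ends are filled last, as licensed by \Cref{restriction lemma} and the observation following it.
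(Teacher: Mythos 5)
First, a point of comparison: the paper does not prove this lemma at all. It is imported verbatim from \cite{bi12bis} (Lemmas 10, 11 and 12), and the only thing the authors check is that the proofs there still apply, namely that girth $21$ (rather than $22$) is still large enough to keep all vertices of each configuration distinct. So your attempt to rebuild the proofs from the machinery of \Cref{122 lemma,restriction lemma,colorable lemma} is a genuinely different route from the paper's, even though it is the same template the paper uses for its \emph{new} configurations in \Cref{reducible pair,reducible triple}.

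As a proof, though, your sketch has a concrete gap in the set-up. You delete \emph{all} the $2$-vertices of each path, so the outermost deleted $2$-vertex is adjacent to a surviving colored $3$-vertex and is left with a list of size only $\geq 1$. That is incompatible with the tools you then invoke: \Cref{restriction lemma} requires $|L(u_1)|\geq 2$ at the outer end of the tail, and every template in \Cref{colorable figure} has lists of size $\geq 2$ at its extremities. The convention that makes the bookkeeping close --- and the one actually used in the proofs of \Cref{reducible pair,reducible triple} --- is to keep the \emph{last} $2$-vertex of each $k$-path in the pre-colored part (delete only $u'_1,\dots,u'_{k-1}$), which yields lists $\geq 2,\geq 3,\geq 4$ from the boundary inward. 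This is not merely cosmetic: for a leg with terminal lists $1,3,4$ the three tail vertices can end up with residual lists of sizes $1,2,1$ after the rest is colored, and such a triple is not colorable in general, so ``color the tail last'' genuinely fails. Two further steps would also not go through as stated: peeling the shortest branch of $(k,l,m)$ via \Cref{restriction lemma} puts the central $3$-vertex in the role of $u_3$, but it has $d^*_H=6$ when all three legs are nonempty, so the hypothesis $|L(u_3)|\geq d^*_H(u_3)-1=5$ cannot hold; and in case (ii) the shortest branch is a $1$-path with a single internal $2$-vertex, so there is no three-vertex tail to peel. Finally, configurations such as $(1,4,5)$ involve about eleven vertices to be colored, while the longest template in \Cref{colorable figure} is a seven-vertex path with pendants; that figure was engineered for the new pairs and triples, not for re-deriving the imported configurations, so the claimed reduction ``to one of the list-coloring instances of \Cref{colorable lemma}'' would need additional ad hoc lemmas (which is precisely what \cite{bi12bis} supplies).
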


\begin{proof}
The proofs of the reducibility of these configurations are presented in \citep{bi12bis} with the same notations. These configurations were reduced for planar subcubic graphs of girth at least 22 where all 3-vertices and 2-vertices on the incident paths are distinct, but the same proofs hold for $G$ since the girth is still high enough for all vertices to remain distinct.
\end{proof}

The following configurations are new or stronger versions of configurations in \citep{bi12bis}.

\begin{lemma}\label{reducible pair}
Graph $G$ cannot contain the following pairs:
\begin{enumerate}
\item[(i)] $(430-024)$
\item[(ii)] $(540-014)$
\item[(iii)] $(431-114)$
\item[(iv)] $(422-223)$
\item[(v)] $(422-214)$
\item[(vi)] $(412-233)$
\item[(vii)] $(332-233)$
\end{enumerate}
\end{lemma}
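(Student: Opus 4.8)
The plan is to treat each of the seven pairs as a reducible configuration in the minimal counterexample $G$, following the same scheme used for \Cref{bi}: assume $G$ contains the pair, delete a carefully chosen set $S$ of vertices so that $G-S$ is a strictly smaller graph, invoke minimality to $2$-distance $4$-color $G-S$, and then show that the partial coloring always extends to $S$, contradicting the choice of $G$. In every case $S$ will consist of the central $3$-vertices $u$ and $v$ together with the $2$-vertices lying on the configuration's paths \emph{near} $u$ and $v$; the $2$-vertices far out on each path, and the $3$-vertices anchoring the outer ends, stay in $G-S$ and are therefore already colored. The point of deleting $u$, $v$ and their nearby $2$-vertices is that, once $G-S$ is colored, the uncolored part $S$ is an induced tree whose only branch points are $u$ and $v$ (each of degree $3$) and whose leaves are attached to already-colored vertices --- exactly the shape of the gadgets of \Cref{colorable lemma}.

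First I would fix, for each pair, the precise deletion set $S$. The guiding principle is that $S$ must be just large enough that (a) $G-S$ is colorable by minimality and every vertex of $S$ has a nonempty residual list, and (b) the residual list sizes on $S$ match one of the colorable configurations of \Cref{colorable lemma}. The residual list of $w\in S$ is $\{a,b,c,d\}$ minus the colors of the already-colored vertices at distance at most $2$ from $w$: a $2$-vertex of $S$ adjacent to a single colored $3$-vertex (its other $2$-distance neighbours lying in $S$) then has list size $\geq 3$, one seeing two colored vertices has size $\geq 2$, and an interior $2$-vertex all of whose $2$-distance neighbours lie in $S$ keeps the full list of size $4$. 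Here the girth hypothesis $g(G)\geq 21$ is essential: it guarantees that all the vertices named in the configuration are pairwise distinct and that the relevant second neighbourhoods are genuinely tree-like, so that the list-size count is exactly the local one and no unexpected identification shrinks a list. Matching is then bookkeeping: the pairs with $m=0$ (so $u,v$ adjacent, as in (i) $(430-024)$ and (ii) $(540-014)$) produce a main path through the edge $uv$ with two hanging branches, matching a two-branch gadget such as \Cref{2343} with $u_2=u$, $u_3=v$; the pairs with $m\geq 1$ (the $1$-path in (iii), the $2$-paths in (iv)--(vii)) insert the $m$ central $2$-vertices between $u$ and $v$, matching the correspondingly longer gadgets.

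To extend the coloring I would invoke the relevant part of \Cref{colorable lemma} directly, after trimming each residual list down to its guaranteed size (removing surplus colors while respecting inclusions). When a gadget does not apply verbatim, the two supporting tools finish the job: \Cref{122 lemma} rules out the only obstruction to coloring a short segment (all three lists equal to one pair $\{a,b\}$), and the restriction trick of \Cref{restriction lemma} lets me peel off a terminal triple and color it last at the cost of forbidding one color on its neighbour. Several gadgets carry an inclusion hypothesis (for example $L(u_1)\subseteq L(u_2)$ in \Cref{23432}); I would verify these in our setting by arguing that whenever such a list attains its stated bound, the colors forbidding it form a subset of those forbidding the neighbour --- which is automatic when the smaller-list vertex is a $2$-vertex whose colored $2$-distance neighbours are among those of the adjacent vertex.

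The main obstacle I anticipate is not any single extension but the combinatorial matching: for each of the seven pairs one must exhibit a deletion set whose residual instance is \emph{exactly} one of the pre-proved colorable configurations, with the correct list sizes and inclusion relations. Getting a list size wrong by one --- obtaining a vertex of size $2$ where the gadget needs $3$ --- forces either enlarging $S$ (deleting one more $2$-vertex to free a color) or appealing to a different gadget, and one must then recheck that enlarging $S$ leaves no vertex with an empty list. Organizing the seven cases so that each lands on an available gadget, and verifying the inclusion hypotheses, is where essentially all the work lies; once a match is made, colorability of $G$ follows at once and contradicts the minimality of the counterexample.
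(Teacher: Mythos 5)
Your proposal follows essentially the same route as the paper's proof: delete $u$, $v$ and the 2-distance-interior $2$-vertices of the incident paths (keeping the outermost $2$-vertex of each pendant path colored in $G-S$), color the rest by minimality, use the girth to guarantee distinctness and the stated residual list sizes, peel off terminal triples via \Cref{restriction lemma}, justify the inclusion hypotheses exactly as in the paper's remark about two adjacent vertices seeing a common color, and finish by matching the core to a gadget of \Cref{colorable lemma}. What remains is only the routine execution of the seven matchings (e.g.\ for (i) one restricts $u$ and $v$ once each and then applies \Cref{22332} to $v_1,v,u,u_1,u_2$), which your setup handles as intended.
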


\begin{figure}[htbp]
\begin{center}
\begin{tikzpicture}[scale=0.7]{thick}
\begin{scope}[every node/.style={circle,draw,minimum size=1pt,inner sep=2}]
	\node[label={above:$u'_{k+1}$}] (0) at (-2,0) {};
    \node[fill,label={above:$u'_k$}] (1) at (0,0) {};
    \node[fill,label={above:$u'_1$}] (2) at (2,0) {};
    \node[fill,label={[label distance = +4pt]above right:$u$}] (3) at (4,0) {};
    \node[fill,label={above:$v'_m$}] (4) at (6,0) {};
    \node[fill,label={above:$v'_1$}] (5) at (8,0) {};
    \node[fill,label={[label distance = +4pt]above right:$v$}] (6) at (10,0) {};
    \node[fill,label={above:$v'_{m+1}$}] (7) at (12,0) {};
    \node[fill,label={above:$v'_{m+p}$}] (8) at (14,0) {};
    \node[label={above:$v'_{m+p+1}$}] (9) at (16,0) {};
    
    \node[fill,label={[label distance = +4pt]above right:$u_1$}] (3') at (4,2) {};
    \node[fill,label={[label distance = +4pt]above right:$u_l$}] (3'') at (4,4) {};
    \node[label={above:$u_{l+1}$}] (3''') at (4,6) {};
    
    \node[fill,label={[label distance = +4pt]above right:$v_1$}] (6') at (10,2) {};
    \node[fill,label={[label distance = +4pt]above right:$v_n$}] (6'') at (10,4) {};
    \node[label={above:$v_{n+1}$}] (6''') at (10,6) {};
\end{scope}

\begin{scope}[every edge/.style={draw=black}]
    \path (0) edge (1);
    \path (1) edge[dashed] (2);
    \path (2) edge (3);
    
    \path (3) edge (3');
    \path (3') edge[dashed] (3'');
    \path (3'') edge (3''');
    
    \path (3) edge (4);
    \path (4) edge[dashed] (5);
    \path (5) edge (6);
    
    \path (6) edge (6');
    \path (6') edge[dashed] (6'');
    \path (6'') edge (6''');
    
    \path (6) edge (7);
    \path (7) edge[dashed] (8);
    \path (8) edge (9);
\end{scope}
\end{tikzpicture}
\caption{\Cref{reducible pair} notations.}
\end{center}
\end{figure}

\begin{proof}
First, we define the following notations:
\begin{itemize}
\item Let $u=(k^+,l^+,m)$ and $v=(m,n^+,p^+)$ form the pair $(klm-mnp)$.
\item Let $uu'_1u'_2\dots u'_{k+1}$ be the $k^+$-path incident to $u$.
\item Let $uu_1u_2\dots u_{l+1}$ be the $l^+$-path incident to $u$.
\item Let $vv'_1v'_2\dots v'_mu$ be the $m$-path incident to $u$ and $v$.
\item Let $vv_1v_2\dots v_{n+1}$ be the $n^+$-path incident to $v$.
\item Let $vv'_{m+1}v'_{m+2}\dots v'_{m+p+1}$ be the $p^+$-path incident to $v$.
\item For every pair $(klm-mnp)$ from (i) to (vii), we define the subgraph 

$H=\{u,v,u'_1,u'_2,\dots, u'_{k-1},u_1,u_2,\dots,u_{l-1},v'_1,v'_2,\dots,v'_{m+p-1},v_1,v_2,\dots,v_{n-1}\}$.  
\end{itemize}

First, observe that all vertices in $H$ are distinct since $G$ has girth at least 21. In the following proofs, we will always color $G-H$ first, which is possible by minimality of $G$. For each vertex of $H$, its list of available colors will always be $\{a,b,c,d\}$ from which we removed the colors it sees on its neighbors from $G-H$. Then, we will show that the coloring of $G-H$ is extendable to $H$ using colorable graphs from \Cref{colorable lemma}. For convenience, we will cite \Cref{colorable figure} from now on.

Also observe that when two adjacent vertices $x_1$, $x_2$ in $H$ sees a common color with $|L(x_1)|\leq |L(x_2)|$, then $L(x_1)\subseteq L(x_2)$. This simple remark will be used throughout the proofs, mostly to justify the use of \Cref{colorable figure}(iv), (vii), (viii), (x), and (xii). For conciseness, we will state the inclusions directly when needed.

\begin{itemize}
\item[(i)] We restrict $u$ by one color to color $u'_1$, $u'_2$, and $u'_3$ last by \Cref{restriction lemma}. We restrict $v$ by one color to color $v'_1$, $v'_2$, and $v'_3$ afterwards. Finally, $v_1$, $v$, $u$, $u_1$, and $u_2$ are colorable by \Cref{22332}. 

\item[(ii)] We restrict $v$ by one color to color $v'_1$, $v'_2$, and $v'_3$ last. We restrict $u'_1$ by one color to color $u'_2$, $u'_3$, and $u'_4$ afterwards. Finally, we color $v$, then $u'_1$, $u$, $u_1$, $u_2$, and $u_3$ are colorable by \Cref{22332}.

\item[(iii)] We restrict $u$ by one color to color $u'_1$, $u'_2$, and $u'_3$ last. We restrict $v$ by one color to color $v'_2$, $v'_3$, and $v'_4$ afterwards. Finally, we color $v$, then $v'_1$, $u$, $u_1$, and $u_2$ are colorable by \Cref{2232}.

\item[(iv)] We restrict $u$ by one color to color $u'_1$, $u'_2$, and $u'_3$ last. Then, $v'_4$, $v'_3$, $v$, $v_1$, $v'_1$, $v'_2$, $u$, and $u_1$ are colorable by \Cref{2334422}. 

\item[(v)] We restrict $v$ by one color to color $v'_3$, $v'_4$, and $v'_5$ last. We restrict $u$ by one color to color $u'_1$, $u'_2$, and $u'_3$ afterwards. Finally, we color $v$, then $u_1$, $u$, $v'_2$, and $v'_1$ are colorable by \Cref{2232}.

\item[(vi)] We restrict $u$ by one color to color $u'_1$, $u'_2$, and $u'_3$ last. Then, we color $u$ and observe that since $L(v'_2)\subseteq L(v'_1)$, $L(v_2)\subseteq L(v_1)$, and $L(v'_4)\subseteq L(v'_3)$, $v'_2$, $v'_1$, $v$, $v_1$, $v_2$, $v'_3$, and $v'_4$ are colorable by \Cref{23432bis}.

\item[(vii)] We color $v$ with $x\in L(v)\setminus L(v_1)$. Observe that $L(v_2)\subseteq L(v_1)$ so $x\notin L(v_1)\cup L(v_2)$. Then, we color $v'_4$, and $v'_3$ in this order. Since $L(u'_2)\subseteq L(u'_1)$, $L(u_2)\subseteq L(u_1)$, and $L(v'_1)\subseteq L(v'_2)$, $u'_2$, $u'_1$, $u$, $u_1$, $u_2$, $v'_2$ and $v'_1$ are colorable by \Cref{23432bis}. Finally, we finish by coloring $v_1$ and $v_2$.
\end{itemize}

\end{proof}

\begin{lemma}\label{reducible triple}
Graph $G$ cannot contain the following triples:
\begin{enumerate}
\item[(i)] $(550-020-045)$
\item[(ii)] $(440-040-024)$
\item[(iii)] $(550-021-134)$
\item[(iv)] $(420-031-134)$
\item[(v)] $(550-022-224)$
\item[(vi)] $(540-032-214)$
\item[(vii)] $(540-032-233)$
\item[(viii)] $(420-042-214)$
\item[(ix)] $(420-042-233)$
\item[(x)] $(431-131-124)$
\item[(xi)] $(421-141-124)$
\item[(xii)] $(431-112-224)$
\item[(xiii)] $(421-132-233)$
\item[(xiv)] $(421-132-214)$
\item[(xv)] $(422-222-214)$
\item[(xvi)] $(332-222-224)$
\item[(xvii)] $(332-232-233)$
\item[(xviii)] $(332-232-214)$
\item[(xix)] $(412-232-214)$
\end{enumerate}
\end{lemma}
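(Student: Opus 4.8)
The plan is to reuse verbatim the machinery set up for \Cref{reducible pair}. For each of the nineteen triples $(klm-mnp-pqr)$, write $u=(k^+,l^+,m)$, $v=(m,n^+,p)$, and $w=(p,q^+,r^+)$ for the three $3$-vertices, joined by an $m$-path (from $u$ to $v$) and a $p$-path (from $v$ to $w$). As in \Cref{reducible pair} I would let $H$ consist of $u$, $v$, $w$ together with the innermost $2$-vertices of every incident path (for an $s^+$-path, its $s-1$ internal vertices nearest the incident $3$-vertex), while the remaining tails of the paths stay in $G-H$. Because $g(G)\geq 21$ and each path contributes only a bounded number of vertices, the vertices of $H$ are pairwise distinct. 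By minimality of $G$ we color $G-H$ first; every $x\in H$ then inherits the list $\{a,b,c,d\}$ minus the colors it sees in $G-H$, and a direct distance count shows these lists have exactly the sizes displayed in \Cref{colorable figure}.

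Next I would peel the long pendant paths. Whenever a path keeps three or more internal vertices in $H$, I apply \Cref{restriction lemma} --- pivoting either on the incident $3$-vertex or on its first inner vertex, exactly as in \Cref{reducible pair}(i),(ii) --- to postpone the coloring of three outermost inner vertices until the very end, at the cost of deleting one color from the pivot's list. Iterating, each long path either disappears (when $H$ retains exactly three of its inner vertices) or collapses to a short stub hanging off its $3$-vertex, in both cases reducing the relevant list by one. What remains to color is a bounded \emph{caterpillar}: a spine running through $u$, the $m$-path, $v$, the $p$-path, and $w$ (absorbing the shortest pendant paths of $u$ and $w$), decorated with a few short legs located at $u$, $v$ (its $n^+$-path), and $w$.

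The crux is then to read this caterpillar off as one of the graphs of \Cref{colorable figure}. Since $m,p\in\{0,1,2\}$ in every listed triple, the spine is short, and after peeling it carries only one or two legs at prescribed positions --- precisely the shape of configurations (iii)--(xii). The positions and list sizes of the branch vertices are matched to the chosen figure by the same bookkeeping as for the pairs, and the list-inclusion hypotheses required by the asymmetric figures ((iv),(vii),(viii),(x),(xii)) are supplied by the standing remark that adjacent core vertices sharing a color satisfy $L(x_1)\subseteq L(x_2)$ whenever $|L(x_1)|\le|L(x_2)|$.

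I expect the main difficulty to lie in the tightest cases, where the $m$- and $p$-paths are $0$- or $1$-paths (e.g.\ $(550-020-045)$, $(431-131-124)$, $(431-112-224)$): there the three $3$-vertices lie within distance two of one another, several core lists fall to their minimum size, and not every pendant path can be peeled, so both the order of peeling and the target figure must be chosen with care. Occasionally, as in \Cref{reducible pair}(vii), one must first color a single vertex with a color lying outside a neighbor's list in order to create the inclusions needed before invoking \Cref{colorable lemma}. Verifying that some such peeling-plus-figure recipe succeeds in each of the nineteen cases is the routine but lengthy bookkeeping that completes the proof.
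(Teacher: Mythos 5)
Your plan is exactly the paper's strategy: delete the bounded subgraph $H$ (the three $3$-vertices, all internal vertices of the two connecting paths, and all but the outermost internal vertex of each pendant path), color $G-H$ by minimality, then extend, using \Cref{restriction lemma} to postpone triples of outer vertices at the cost of one color on a pivot, and finishing by matching the residual configuration to one of the graphs in \Cref{colorable figure} with the help of the standing inclusion remark. You even correctly anticipate the two extra tricks the paper needs (pre-coloring a vertex with a color outside a neighbour's list to manufacture exclusions, and careful ordering in the tight cases).

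The gap is that the proposal proves none of the nineteen cases: the entire content of the lemma is the case-by-case verification, and ``some such peeling-plus-figure recipe succeeds'' is asserted rather than demonstrated. This is not purely mechanical bookkeeping. First, the postponed triple in an application of \Cref{restriction lemma} is not always a pendant tail: in case (iii), for instance, the paper restricts $v'_1$ in order to color $w$, $w_1$, $w_2$ afterwards, so the ``peeling'' can consume a $3$-vertex of the configuration itself, which changes what caterpillar remains. Second, several cases require ad hoc list restrictions that are not generated by \Cref{restriction lemma} at all, e.g.\ in case (x) one must shrink $L(v'_2)$ to $L(v'_2)\setminus L(w_1)$ (and in (xi) shrink $L(v'_1)$ by $L(u_1)$) before the residue fits \Cref{22432} or \Cref{224322}. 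Third, the nested restrictions must be undone in last-in-first-out order, and one has to check at each stage that the hypotheses of \Cref{restriction lemma} (colorability of the rest for \emph{every} one-color restriction of the pivot) are met; choosing the wrong pivot or order can leave a residual graph whose list sizes match no entry of \Cref{colorable figure}. Since the correctness of the lemma hinges on these choices working out in all nineteen cases, the proof is incomplete without exhibiting them.
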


\begin{figure}[htbp]
\begin{center}
\begin{tikzpicture}[scale=0.55]{thick}
\begin{scope}[every node/.style={circle,draw,minimum size=1pt,inner sep=2}]
	\node[label={above:$u'_{k+1}$}] (0) at (-2,0) {};
    \node[fill,label={above:$u'_k$}] (1) at (0,0) {};
    \node[fill,label={above:$u'_1$}] (2) at (2,0) {};
    \node[fill,label={[label distance = +4pt]above right:$u$}] (3) at (4,0) {};
    \node[fill,label={above:$v'_m$}] (4) at (6,0) {};
    \node[fill,label={above:$v'_1$}] (5) at (8,0) {};
    \node[fill,label={[label distance = +4pt]above right:$v$}] (6) at (10,0) {};
    \node[fill,label={above:$v'_{m+1}$}] (7) at (12,0) {};
    \node[fill,label={above:$v'_{m+p}$}] (8) at (14,0) {};
    \node[fill,label={[label distance = +4pt]above right:$w$}] (9) at (16,0) {};
    \node[fill,label={above:$w'_1$}] (10) at (18,0) {};
    \node[fill,label={above:$w'_r$}] (11) at (20,0) {};
    \node[label={above:$w'_{r+1}$}] (12) at (22,0) {};
    
    \node[fill,label={[label distance = +4pt]above right:$u_1$}] (3') at (4,2) {};
    \node[fill,label={[label distance = +4pt]above right:$u_l$}] (3'') at (4,4) {};
    \node[label={above:$u_{l+1}$}] (3''') at (4,6) {};
    
    \node[fill,label={[label distance = +4pt]above right:$v_1$}] (6') at (10,2) {};
    \node[fill,label={[label distance = +4pt]above right:$v_n$}] (6'') at (10,4) {};
    \node[label={above:$v_{n+1}$}] (6''') at (10,6) {};
    
    \node[fill,label={[label distance = +4pt]above right:$w_1$}] (9') at (16,2) {};
    \node[fill,label={[label distance = +4pt]above right:$w_q$}] (9'') at (16,4) {};
    \node[label={above:$w_{q+1}$}] (9''') at (16,6) {};
\end{scope}

\begin{scope}[every edge/.style={draw=black}]
    \path (0) edge (1);
    \path (1) edge[dashed] (2);
    \path (2) edge (3);
    
    \path (3) edge (3');
    \path (3') edge[dashed] (3'');
    \path (3'') edge (3''');
    
    \path (3) edge (4);
    \path (4) edge[dashed] (5);
    \path (5) edge (6);
    
    \path (6) edge (6');
    \path (6') edge[dashed] (6'');
    \path (6'') edge (6''');
    
    \path (6) edge (7);
    \path (7) edge[dashed] (8);
    \path (8) edge (9);
    
    \path (9) edge (9');
    \path (9') edge[dashed] (9'');
    \path (9'') edge (9''');
    
    \path (9) edge (10);
    \path (10) edge[dashed] (11);
    \path (11) edge (12);
\end{scope}
\end{tikzpicture}
\caption{\Cref{reducible triple} notations.}
\end{center}
\end{figure}

\begin{proof}
We will use similar notations to the proofs of \Cref{reducible pair}:
\begin{itemize}
\item Let $u=(k^+,l^+,m)$, $v=(m,n^+,p)$, and $w=(p,q^+,r^+)$ form the triple $(klm-mnp-pqr)$.
\item Let $uu'_1u'_2\dots u'_{k+1}$ be the $k^+$-path incident to $u$.
\item Let $uu_1u_2\dots u_{l+1}$ be the $l^+$-path incident to $u$.
\item Let $vv'_1v'_2\dots v'_mu$ be the $m$-path incident to $u$ and $v$.
\item Let $vv_1v_2\dots v_{n+1}$ be the $n^+$-path incident to $v$.
\item Let $vv'_{m+1}v'_{m+2}\dots v'_{m+p}w$ be the $p$-path incident to $v$ and $w$.
\item Let $ww_1w_2\dots w_{q+1}$ be the $q^+$-path incident to $w$.
\item Let $ww'_1w'_2\dots w'_{r+1}$ be the $r^+$-path incident to $w$.
\item For every triple $(klm-mnp-pqr)$ from (i) to (xix), we define the subgraph 

$H=\{u,v,w,u'_1,u'_2,\dots, u'_{k-1},$ $u_1,u_2,\dots,u_{l-1},$ $v'_1,v'_2,\dots,v'_{m+p},$ $v_1,v_2,\dots,v_{n-1},$\\ $w_1,w_2,\dots,w_{q-1},$ $w'_1,w'_2,\dots, w'_{r-1}\}$.
\end{itemize}

Similarly, all vertices in $H$ are distinct since $G$ has girth at least 21. We will color $G-H$ by minimality of $G$ first, then extend that coloring to $H$ using \Cref{colorable figure}.

\begin{itemize}
\item[(i)] We restrict $u'_1$ by one color to color $u'_2$, $u'_3$, and $u'_4$ last. We restrict $u_1$ by one color to color $u_2$, $u_3$, and $u_4$ afterwards. We restrict $w$ by one color to color $w_1$, $w_2$, and $w_3$ afterwards. We restrict $w'_1$ by one color to color $w'_2$, $w'_3$, and $w'_4$ afterwards. Now, we color $v_1$, $v$, $w$, $u$, $u_1$, and $u'_1$ by \Cref{2343}. Then, we color $w'_1$.

\item[(ii)] We restrict $u$ by one color to color $u'_1$, $u'_2$, and $u'_3$ last. We restrict $u$ again by one color to color $u_1$, $u_2$, and $u_3$ afterwards. We restrict $v$ by one color to color $v_1$, $v_2$, and $v_3$ afterwards. We restrict $w$ by one color to color $w'_1$, $w'_2$, and $w'_3$ afterwards. We color the remaining vertices $w_1$, $w$, $v$, and $u$ by \Cref{2232}.

\item[(iii)] We restrict $u'_1$ by one color to color $u'_2$, $u'_3$, and $u'_4$ last. We restrict $u_1$ by one color to color $u_2$, $u_3$, and $u_4$ afterwards. We restrict $w$ by one color to color $w'_1$, $w'_2$, and $w'_3$ afterwards. We restrict $v'_1$ by one color to color $w$, $w_1$, and $w_2$ afterwards. The remaining vertices $v_1$, $v$, $v'_1$, $u$, $u_1$, and $u'_1$ are colorable by \Cref{2343}.

\item[(iv)] We restrict $u$ by one color to color $u'_1$, $u'_2$, and $u'_3$ last. We restrict $w$ by one color to color $w'_1$, $w'_2$, and $w'_3$ afterwards. We restrict $v'_1$ by one color to color $w$, $w_1$, and $w_2$ afterwards. The remaining vertices $u_1$, $u$, $v$, $v'_1$, $v_1$, and $v_2$ are colorable by \Cref{22432}.

\item[(v)] We restrict $u'_1$ by one color to color $u'_2$, $u'_3$, and $u'_4$ last. We restrict $u_1$ by one color to color $u_2$, $u_3$, and $u_4$ afterwards. We restrict $w$ by one color to color $w'_1$, $w'_2$, and $w'_3$ afterwards. The remaining vertices $w_1$, $w$, $v'_2$, $v'_1$, $v$, $v_1$, $u$, $u'_1$, and $u_1$ are colorable by \Cref{2244343} as $L(v_1)\subseteq L(v)$.

\item[(vi)] We restrict $u'_1$ by one color to color $u'_2$, $u'_3$, and $u'_4$ last. We restrict $w$ by one color to color $w'_1$, $w'_2$, and $w'_3$ afterwards. We color $v$ with $x\in L(v)\setminus L(v_1)$. Observe that $L(v_2)\subseteq L(v_1)$ so $x\notin L(v_1)\cup L(v_2)$. Now, we color $w$, $v'_2$, $v'_1$ in this order. The vertices $u'_1$, $u$, $u_1$, $u_2$, and $u_3$ are colorable by \Cref{22332}. Then, we color the remaining vertices $v_1$ and $v_2$ in this order.

\item[(vii)] We restrict $u'_1$ by one color to color $u'_2$, $u'_3$, and $u'_4$ last. We color $w$ with $x\in L(w)\setminus L(w_1)$. Observe that $L(w_2)\subseteq L(w_1)$ so $x\notin L(w_1)\cup L(w_2)$. We color $v$ with $y\in L(v)\setminus L(v_1)$. Observe that $L(v_2)\subseteq L(v_1)$ so $y\notin L(v_1)\cup L(v_2)$. Now, we color $w'_2$, $w'_1$, $v'_2$, $v'_1$ in this order. The vertices $u'_1$, $u$, $u_1$, $u_2$, and $u_3$ are colorable by \Cref{22332}. Then, we color the remaining vertices $v_1$, $v_2$, $w_1$, and $w_2$ in this order.

\item[(viii)] We restrict $u$ by one color to color $u'_1$, $u'_2$, and $u'_3$ last. We restrict $w$ by one color to color $w'_1$, $w'_2$, and $w'_3$ afterwards. We restrict $v$ by one color to color $v_1$, $v_2$, and $v_3$ afterwards. We color $w$ then the remaining vertices $u_1$, $u$, $v$, $v'_1$, and $v'_2$ are colorable by \Cref{22332}.

\item[(ix)] We restrict $u$ by one color to color $u'_1$, $u'_2$, and $u'_3$ last. We restrict $v$ by one color to color $v_1$, $v_2$, and $v_3$ afterwards. We color $w$ with $x\in L(w)\setminus L(w_1)$. Observe that $L(w_2)\subseteq L(w_1)$ so $x\notin L(w_1)\cup L(w_2)$.  We color $w'_2$ and $w'_1$ in this order. The vertices $u_1$, $u$, $v$, $v'_1$, and $v'_2$ are colorable by \Cref{22332}. Now, we color the remaining vertices $w_1$ and $w_2$ in this order. 

\item[(x)] We restrict $u$ by one color to color $u'_1$, $u'_2$, and $u'_3$ last. We restrict $v'_1$ by one color to color $u$, $u_1$, and $u_2$ afterwards. We restrict $w$ by one color to color $w'_1$, $w'_2$, and $w'_3$ afterwards. We restrict $L(v'_2)$ to $L(v'_2)\setminus L(w_1)$. We color the vertices $w$, $v'_2$, $v$, $v'_1$, $v_1$ and $v_2$ by \Cref{22432}. Then, we color the remaining vertex $w_1$.

\item[(xi)] We restrict $u$ by one color to color $u'_1$, $u'_2$, and $u'_3$ last. We restrict $v$ by one color to color $v_1$, $v_2$, and $v_3$ afterwards. We restrict $w$ by one color to color $w'_1$, $w'_2$, and $w'_3$ afterwards. We restrict $L(v'_1)$ to $L(v'_1)\setminus L(u_1)$. We color $w_1$, $w$, $v'_2$, $v$, $v'_1$, and $u$ by \Cref{224322}. Then, we color the remaining vertex $u_1$.

\item[(xii)] We restrict $u$ by one color to color $u'_1$, $u'_2$, and $u'_3$ last. We restrict $v'_1$ by one color to color $u$, $u_1$, and $u_2$ afterwards. We restrict $w$ by one color to color $w'_1$, $w'_2$, and $w'_3$ afterwards. We color the remaining vertices $w_1$, $w$, $v'_3$, $v'_2$, $v$, and $v'_1$ by \Cref{224322}.

\item[(xiii)] We restrict $u$ by one color to color $u'_1$, $u'_2$, and $u'_3$ last. We color $w$ with $x\in L(w)\setminus L(w_1)$. Observe that $L(w_2)\subseteq L(w_1)$ so $x\notin L(w_1)\cup L(w_2)$.  We color $w'_2$ and $w'_1$ in this order. The vertices $v_2$, $v_1$, $v$, $v'_3$, $v'_2$, $v'_1$, $u$, and $u_1$ are colorable by \Cref{234422} as $L(v_2)\subseteq L(v_1)$ and $L(v'_3)\subseteq L(v'_2)$. Now, we color the remaining vertices $w_1$ and $w_2$ in this order.

\item[(xiv)] We restrict $u$ by one color to color $u'_1$, $u'_2$, and $u'_3$ last. We restrict $w$ by one color to color $w'_1$, $w'_2$, and $w'_3$ afterwards. We color $w$ then the remaining vertices $v'_3$, $v'_2$, $v$,  $v_1$, $v_2$, $v'_1$, $u$, and $u_1$ are colorable by \Cref{234422} as $L(v_2)\subseteq L(v_1)$ and $L(v'_3)\subseteq L(v'_2)$.

\item[(xv)] We restrict $u$ by one color to color $u'_1$, $u'_2$, and $u'_3$ last. We restrict $w$ by one color to color $w'_1$, $w'_2$, and $w'_3$ afterwards. We color $w$ then the remaining vertices $v'_4$, $v'_3$, $v$,  $v_1$, $v'_1$, $v'_2$, $u$, and $u_1$ are colorable by \Cref{2334422}.

\item[(xvi)] We restrict $w$ by one color to color $w'_1$, $w'_2$, and $w'_3$ last. We color $u$ with $x\in L(u)\setminus L(u_1)$. Observe that $L(u_2)\subseteq L(u_1)$ so $x\notin L(u_1)\cup L(u_2)$.  We color $u'_2$ and $u'_1$ in this order. We color $v'_2$, $v'_1$, $v$,  $v_1$, $v'_3$, $v'_4$, $w$, and $w_1$ by \Cref{2334422}. Now, we color the remaining vertices $u_1$ and $u_2$ in this order. 

\item[(xvii)] We color $w$ with $x\in L(w)\setminus L(w_1)$. Observe that $L(w_2)\subseteq L(w_1)$ so $x\notin L(w_1)\cup L(w_2)$. We color $v$ with $y\in L(v)\setminus L(v_1)$. Observe that $L(v_2)\subseteq L(v_1)$ so $y\notin L(v_1)\cup L(v_2)$.  We color $w'_2$, $w'_1$, $v'_4$, and $v'_3$ in this order. We color $v'_1$, $v'_2$, $u$,  $u_1$, $u_2$, $u'_1$, $u'_2$ by \Cref{23432bis} as $L(u'_2)\subseteq L(u'_1)$, $L(u_2)\subseteq L(u_1)$, and $L(v'_1)\subseteq L(v'_2)$. Now, we color the remaining vertices $v_1$, $v_2$, $w_1$ and $w_2$ in this order.

\item[(xviii)] We restrict $w$ by one color to color $w'_1$, $w'_2$, and $w'_3$ last. We color $v$ with $x\in L(v)\setminus L(v_1)$. Observe that $L(v_2)\subseteq L(v_1)$ so $x\notin L(v_1)\cup L(v_2)$. We color $w$,  $v'_4$, and $v'_3$ in this order. We color $v'_1$, $v'_2$, $u$,  $u_1$, $u_2$, $u'_1$, $u'_2$ by \Cref{23432bis} as $L(u'_2)\subseteq L(u'_1)$, $L(u_2)\subseteq L(u_1)$, and $L(v'_1)\subseteq L(v'_2)$. Now, we color the remaining vertices $v_1$ and $v_2$ in this order.

\item[(xix)] We restrict $w$ by one color to color $w'_1$, $w'_2$, and $w'_3$ last. We restrict $u$ by one color to color $u'_1$, $u'_2$, and $u'_3$ afterwards. We color $u$ and $w$ then the remaining vertices $v'_2$, $v'_1$, $v$,  $v_1$, $v_2$, $v'_3$, $v'_4$ are colorable by \Cref{23432bis} as $L(v'_2)\subseteq L(v'_1)$, $L(v_2)\subseteq L(v_1)$, and $L(v'_4)\subseteq L(v'_3)$.
\end{itemize}

\end{proof}

\subsection{Discharging rules \label{tonton}}

In this section, we will define a discharging procedure that contradicts the structural properties of $G$ (see \Cref{bi,reducible pair,reducible triple}) showing that $G$ does not exist. We assign to each vertex $u$ the charge $\mu(u)=\frac{19}2d(u)-21$ and to each face $f$ the charge $\mu(f)=d(f)-21$. By \Cref{equation}, the total sum of the charges is negative. We then apply the following discharging rules:

\medskip

Let $u$ and $v$ be endvertices of a $m$-path where $u=(k,l,m)$ with $k+l+m\leq 7$ and $v=(m,n,p)$. Vertex $u$ gives charge to $v$ in the following cases:

\begin{itemize}
\item[\ru0] If $m=0$
\begin{itemize}
\item[(i)] and $v=(0,5,5)$, then $u$ gives $\frac52$ to $v$.
\item[(ii)] and $v=(0,4,5)$, then $u$ gives $\frac32$ to $v$.
\item[(iii)] and $v\in\{(0,3,5),(0,4,4)\}$, then $u$ gives $\frac12$ to $v$.
\item[(iv)] and $v=(0,2,5)$, then $u$ gives $\frac14$ to $v$.
\end{itemize}

\item[\ru1] If $m=1$
\begin{itemize}
\item[(i)] and $v\in\{(1,3,5),(1,4,4)\}$, then $u$ gives $\frac32$ to $v$.
\item[(ii)] and $v\in\{(1,3,4),(1,2,5)\}$, then $u$ gives $\frac12$ to $v$.
\end{itemize}

\item[\ru2] If $m=2$
\begin{itemize}
\item[(i)] and $v=(2,2,5)$, then $u$ gives $\frac34$ to $v$.
\item[(ii)] and $v\in\{(2,3,3),(2,1,5)\}$, then $u$ gives $\frac12$ to $v$.
\item[(iii)] and $v=(2,2,4)$, then $u$ gives $\frac14$ to $v$.
\end{itemize}

\item[\ru3] Finally, every $3$-vertex gives 1 to each $2$-vertex on its incident paths.

\end{itemize}

%In the following two sections, we will first prove that every vertex ends up with a non-negative charge after the discharging procedure. Thus, by \Cref{equation}, every vertex must have exactly charge 0 which will be proven to be impossible.

\subsection{Verifying that charges on each face and each vertex are non-negative} \label{verification}

Let $\mu^*$ be the assigned charges after the discharging procedure. In what follows, we will prove that: $$\forall u \in V(G), \mu^*(u)\ge 0 \text{ and } \forall f \in F(G), \mu^*(f)\ge 0.$$

First of all, since $G$ is connected (\Cref{connected}), has minimum degree at least 2 (\Cref{minimumDegree}), has girth at least 21, and the discharging rules do not interfere with charge on faces, every face $f$ verifies $\mu^*(f) = \mu(f) = d(f)-21\geq 0$.

Now, let $u$ be a vertex in $V(G)$. If $d(u) = 2$, then $u$ receives charge 1 from each endvertex of the path it lies on by \ru3; thus we get $\mu^*(u) = \mu(u) + 2\cdot 1 = \frac{19}2\cdot 2 -21 + 2 = 0$.

From now on, suppose that $d(u) = 3$ and let $u=(k,l,m)$. Recall that $\mu(u)=\frac{19}{2}\cdot 3 - 21 = \frac{15}{2}$:

\textbf{Case 1:} Suppose that $k+l+m\geq 8$. \\
First, observe that $u$ only gives away charges by \ru3. More precisely, $u$ gives a total of $k+l+m$ to $2$-vertices. Since there are no $6^+$-paths, $(1^+,4^+,5^+)$, $(2^+,3^+,4^+)$, or $(3^+,3^+,3^+)$ due to \Cref{bi}, then the only possible values for $k$, $l$, and $m$ are as follows:
\begin{itemize}
\item If $u$ is a $(5,5,0)$, $(5,4,0)$, $(5,3,0)$ or $(4,4,0)$, then $u$ cannot be adjacent to a vertex $v=(m,n,p)$ with $m+n+p\geq 8$ as $(430-024)$ is reducible by \Cref{reducible pair}(i). As a result, $u$ receives charge $\frac52$ (resp. $\frac32$, $\frac12$, or $\frac12$) by \ru0(i) (resp. \ru0(ii), \ru0(iii), or \ru0(iii)) when it is a $(5,5,0)$ (resp. $(5,4,0)$, $(5,3,0)$, or $(4,4,0)$). To sum up, we have
\begin{align*}
\mu^*(u) & = \frac{15}{2} + \frac52 - 5 - 5 = 0 & \text{when }u=(5,5,0)\\
& = \frac{15}{2} + \frac32 - 5 - 4 = 0 & \text{when }u=(5,4,0)\\
& = \frac{15}{2} + \frac12 - 5 - 3 = 0 & \text{when }u=(5,3,0)\\
& = \frac{15}{2} + \frac12 - 4 - 4 = 0 & \text{when }u=(4,4,0)\\  
\end{align*}

\item If $u$ is a $(5,3,1)$, $(4,4,1)$, or $(4,3,1)$, then $u$ cannot share a $1$-path with a vertex $v=(m,n,p)$ with $m+n+p\geq 8$ as $(431-114)$ is reducible by \Cref{reducible pair}(iii). As a result, $u$ receives charge $\frac32$ (resp. $\frac32$, or $\frac12$) by \ru1(i) (resp. \ru1(i), or \ru1(ii)) when it is a $(5,3,1)$ (resp. $(4,4,1)$, or $(4,3,1)$). To sum up, we have
\begin{align*}
\mu^*(u) & = \frac{15}{2} + \frac32 - 5 - 3 - 1 = 0 & \text{when }u=(5,3,1)\\
& = \frac{15}{2} + \frac32 - 4 - 4 - 1 = 0 & \text{when }u=(4,4,1)\\
& = \frac{15}{2} + \frac12 - 4 - 3 - 1 = 0 & \text{when }u=(4,3,1)\\  
\end{align*}

\item If $u$ is a $(5,2,2)$ or $(4,2,2)$, then $u$ cannot share a $2$-path with a vertex $v=(m,n,p)$ with $m+n+p\geq 8$ as $(422-223)$ and $(422-214)$ are reducible respectively by \Cref{reducible pair}(iv) and \Cref{reducible pair}(v). As a result, $u$ receives charge $\frac34$ (resp. $\frac14$) by \ru2(i) (resp. \ru2(iii)) when it is a $(5,2,2)$ (resp. $(4,2,2)$) twice (once from each incident 2-path). To sum up, we have
\begin{align*}
\mu^*(u) & = \frac{15}{2} + 2\cdot\frac34 - 5 - 2 - 2 = 0 & \text{when }u=(5,2,2)\\
& = \frac{15}{2} + 2\cdot\frac14 - 4 - 2 - 2 = 0 & \text{when }u=(4,2,2)\\
\end{align*}

\item If $u$ is a $(3,3,2)$, then $u$ cannot share a $2$-path with a vertex $v=(m,n,p)$ with $m+n+p\geq 8$ as $(412-233)$ and $(332-233)$ are reducible respectively by \Cref{reducible pair}(vi) and (vii). As a result, $u$ receives charge $\frac12$ by \ru2(ii). To sum up, we have $$\mu^*(u) = \frac{15}{2} + \frac12 - 3 - 3 - 2 = 0 $$

\item If $u$ is a $(5,2,1)$, then $u$ cannot share a $2$-path with a vertex $v=(l,i,j)$ with $l+i+j\geq 8$ and $u$ cannot share a $1$-path with a vertex $w=(m,n,p)$ with $m+n+p\geq 8$ at the same time, as $(412-233)$ and $(421-132-214)$ are reducible respectively by \Cref{reducible pair}(vi) and \Cref{reducible triple}(xiv). As a result, $u$ receives at least charge $\frac12$ by \ru1(ii) or \ru2(ii). To sum up, we have $$\mu^*(u) \geq \frac{15}{2} + \frac12 - 5 - 2 - 1 = 0 $$
\end{itemize}

\textbf{Case 2:} Suppose that $k+l+m\leq 7$ and that $u$ is a $(2^-,5^-,2^-)$.\\
First, observe that when $u$ is a $(2^-,2^-,2^-)$, it gives at most $\frac52$ along every incident path except for the case of \ru2(i), when it shares a 2-path with a $(2,2,5)$. Indeed, by \ru0, $u$ gives at most $\frac52$ to an adjacent 3-vertex. By \ru1 and \ru3, $u$ gives 1 to the 2-vertex on the 1-path and at most $\frac32$ to the other endvertex. By \ru2(ii), \ru2(iii), and \ru3, $u$ gives 2 to the 2-vertices on the 2-path and at most $\frac12$ to the other endvertex. As a result, $u$, a $(2^-,2^-,2^-)$ that does not share a 2-path with a $(2,2,5)$, verifies
$$ \mu^*(u) \geq \frac{15}{2} - 3\cdot \frac52 = 0$$
In other words, for the following values of $k,l,m$, we only need to look at $2\leq l\leq 5$. Moreover, when $l=2$, we can assume w.l.o.g. that the other endvertex of the 2-path is a $(2,2,5)$ since $k$, $l$, and $m$ are interchangeable.

Let $v=(i,j,k)$ share the $k$-path with $u$ and let $w=(m,n,p)$ share the $m$-path with $u$ (see \Cref{notation figure}). For each case, only \ru3, \ru{k} and \ru{m} apply, with the additional \ru2(i) when $l=2$.

\begin{figure}[htbp]
\begin{center}
\begin{tikzpicture}[scale=0.55]{thick}
\begin{scope}[every node/.style={circle,draw,minimum size=1pt,inner sep=2}]
	\node (0) at (-2,0) {};
    \node[fill] (1) at (0,0) {};
    \node[fill] (2) at (2,0) {};
    \node[fill,label={[label distance = +4pt]above right:$v$}] (3) at (4,0) {};
    \node[fill] (4) at (6,0) {};
    \node[fill] (5) at (8,0) {};
    \node[fill,label={[label distance = +4pt]above right:$u$}] (6) at (10,0) {};
    \node[fill] (7) at (12,0) {};
    \node[fill] (8) at (14,0) {};
    \node[fill,label={[label distance = +4pt]above right:$w$}] (9) at (16,0) {};
    \node[fill] (10) at (18,0) {};
    \node[fill] (11) at (20,0) {};
    \node (12) at (22,0) {};
    
    \node[fill] (3') at (4,2) {};
    \node[fill] (3'') at (4,4) {};
    \node (3''') at (4,6) {};
    
    \node[fill] (6') at (10,2) {};
    \node[fill] (6'') at (10,4) {};
    \node (6''') at (10,6) {};
    
    \node[fill] (9') at (16,2) {};
    \node[fill] (9'') at (16,4) {};
    \node (9''') at (16,6) {};
\end{scope}

\begin{scope}[every edge/.style={draw=black}]
    \path (0) edge (1);
    \path (1) edge[dashed] node[above] {$i$ vertices} (2);
    \path (2) edge (3);
    
    \path (3) edge (3');
    \path (3') edge[dashed] node[right] {$j$ vertices} (3'');
    \path (3'') edge (3''');
    
    \path (3) edge (4);
    \path (4) edge[dashed] node[above] {$k$ vertices} (5);
    \path (5) edge (6);
    
    \path (6) edge (6');
    \path (6') edge[dashed] node[right] {$l$ vertices} (6'');
    \path (6'') edge (6''');
    
    \path (6) edge (7);
    \path (7) edge[dashed] node[above] {$m$ vertices} (8);
    \path (8) edge (9);
    
    \path (9) edge (9');
    \path (9') edge[dashed] node[right] {$n$ vertices} (9'');
    \path (9'') edge (9''');
    
    \path (9) edge (10);
    \path (10) edge[dashed] node[above] {$p$ vertices} (11);
    \path (11) edge (12);
\end{scope}
\end{tikzpicture}
\caption{Notations.}
\label{notation figure}
\end{center}
\end{figure}

\begin{itemize}
\item If $u$ is a $(0,5^-,0)$, then we distinguish the two following cases:

If $2\leq l\leq 3$, then $u$ gives at most 3 along the $l$-path: either 3 to the 2-vertices in the case of a 3-path or 2 to the 2-vertices and $\frac34$ to the other endvertex by \ru2(i). Since $(550-020-045)$ is reducible by \Cref{reducible triple}(i), $u$ cannot give $\frac52$ twice to $v$ and $w$ by \ru0. So at worst, we have
$$\mu^*(u) = \frac{15}2 - 3 - \frac52 - \frac32 = \frac12 $$

If $4\leq l\leq 5$, then $u$ gives at most 5 to the 2-vertices along the $l$-path.
Since $(440-040-024)$ is reducible by \Cref{reducible triple}(ii), if $u$ gives at least $\frac32$ to $v$ by \ru0(i) or \ru0(ii), then $u$ does not give charge to $w$.

%Since $(440-040-024)$ is reducible by \Cref{reducible triple}(ii), $u$ cannot give $\frac32$ twice to $v$ and $w$ by \ru0.
So at worst, we have
$$\mu^*(u) = \frac{15}2 - 5 - \frac52 = 0 $$

\item If $u$ is a $(0,5^-,1)$, then we distinguish the two following cases:

If $2\leq l\leq 3$, then $u$ gives at most 3 along the $l$-path: either 3 to the 2-vertices in the case of a 3-path or 2 to the 2-vertices and $\frac34$ to the other endvertex by \ru2(i). Since $(550-021-134)$ and $(420-031-134)$ are reducible respectively by \Cref{reducible triple}(iii), $u$ cannot give $\frac52$ twice to $v$ and $w$ by \ru0 and \ru1 ($1+\frac32$ in the case of \ru1(i)). So at worst, we have
$$\mu^*(u) = \frac{15}2 - 3 - \frac52 - \frac32 = \frac12 $$

If $4\leq l\leq 5$, then $u$ gives at most 5 along the $l$-path. 
\begin{itemize}
\item If $w$ is a $(1,3^+,4^+)$, then $v$ cannot be a $(4^+,2^+,0)$ since $(420-031-134)$ is reducible by \Cref{reducible triple}(iv). As a result, $u$ gives at most $\frac32$ along the 1-path by \ru1 and nothing to its adjacent 3-vertex by \ru0. So at worst, we have
$$\mu^*(u) = \frac{15}2 - 5 - 1 - \frac32 = 0 $$
\item If $w$ is not a $(1,3^+,4^+)$, then $u$ gives at most $\frac12$ along the 1-path by \ru1 and at most $\frac12$ to its adjacent 3-vertex by \ru0 since $(540-014)$ is reducible by \Cref{reducible pair}(ii). So at worst, we have
$$\mu^*(u) = \frac{15}2 - 5 - 1 - \frac12 - \frac12 = \frac12 $$
\end{itemize}

\item If $u$ is a $(0,5^-,2)$, then we distinguish the four following cases:

If $l=2$, then $u$ gives $2+\frac34$ along the 2-path by \ru3 and \ru2(i). Since $(550-022-224)$ is reducible by \Cref{reducible triple}(v), $v$ cannot be a $(5,5,0)$. As a result, $u$ gives at most $\frac32$ to its adjacent 3-vertex by \ru0 and $2+\frac34$ along each 2-path by \ru3 and \ru2. So at worst, we have
$$ \mu^*(u) = \frac{15}2 - \frac32 - 2\cdot \left(2+\frac34\right) = \frac12 $$

If $l = 3$, then $u$ gives 3 along the $l$-path and 2 along the 2-path by \ru3. 
\begin{itemize}
\item If $v$ is a $(5,4^+,0)$, then $w$ cannot be a $(2,1^+,4^+)$ nor a $(2,3,3)$ as $(540-032-214)$ and $(540-032-233)$ are reducible respectively by \Cref{reducible triple}(vi) and \Cref{reducible triple}(vii). As a result, $u$ gives at most $\frac52$ to $v$ by \ru0 and nothing to $w$ by \ru2. So at worst, we have
$$ \mu^*(u) = \frac{15}2 - 3 - 2 - \frac52 = 0 $$
\item If $v$ is not a $(5,4^+,0)$, then $u$ gives at most $\frac12$ to $v$ by \ru0 and at most $\frac34$ to $w$ by \ru2. So at worst, we have
$$ \mu^*(u) = \frac{15}2 - 3 - 2 - \frac12 - \frac34 = \frac54 $$
\end{itemize}

If $l=4$, then $u$ gives 4 along the $l$-path and $2$ along the 2-path by \ru3. Since $(430-024)$ is reducible by \Cref{reducible pair}(i), $v$ cannot be a $(4^+,3^+,0)$. As a result, $u$ gives at most $\frac14$ to $v$ by \ru0 and at most $\frac34$ to $w$ by \ru2. So at worst, we have
$$ \mu^*(u) = \frac{15}2 - 4 - 2 - \frac14 - \frac34 = \frac12 $$

If $l=5$, then $u$ gives 5 along the $l$-path and 2 along the 2-path by \ru3. 
\begin{itemize}
\item If $v$ is a $(4^+,2^+,0)$, then $w$ cannot be a $(2,1^+,4^+)$ nor a $(2,3,3)$ as $(420-042-214)$ and $(420-042-233)$ are reducible respectively by \Cref{reducible triple}(viii) and \Cref{reducible triple}(ix). Moreover, $v$ cannot be a $(4^+,3^+,0)$ since $(430-024)$ is reducible by \Cref{reducible pair}(i). As a result, $u$ gives at most $\frac14$ to $v$ by \ru0 and nothing to $w$ by \ru2. So at worst, we have
$$ \mu^*(u) = \frac{15}2 - 5 - 2 - \frac14 = \frac14 $$

\item If $v$ is not a $(4^+,2^+,0)$, then $v=(i,j,k)$ with $i+j+k\leq 7$. Thus, $u$ receives $\frac14$ from $v$ by \ru0(iv). Moreover, $u$ gives nothing to $v$ by \ru0 and at most $\frac34$ to $w$ by \ru2. So at worst, we have
$$ \mu^*(u) = \frac{15}2 - 5 - 2 + \frac14 - \frac34 = 0 $$  
\end{itemize}

\item If $u$ is a $(1,5^-,1)$, then we distinguish the three following cases:

If $l=2$, then $u$ gives $2+\frac34$ along the 2-path by \ru3 and \ru2(i) and 1 to each 2-vertex on the 1-paths by \ru3. Since $(431-112-224)$ is reducible by \Cref{reducible triple}(xii), $v$ cannot be a $(4^+,3^+,1)$. The same holds for $w$. As a result, $u$ gives at most $\frac12$ twice to $v$ and $w$ by \ru1. So at worst, we have
$$ \mu^*(u) = \frac{15}2 - 2 - \frac34 - 1 - 1 - 2\cdot\frac12 = \frac74 $$

If $l=3$, then $u$ gives 3 to the $l$-path and 1 to each 2-vertex on the 1-paths by \ru3. Since $(431-131-124)$ is reducible by \Cref{reducible triple}(x), $v$ and $w$ cannot both be $(4^+,3^+,1)$s. As a result, $u$ cannot give $\frac32$ twice by \ru1. So at worst, we have
$$ \mu^*(u) = \frac{15}2 - 3 - 1 - 1 - \frac32 - \frac12 = \frac12 $$ 

If $4\leq l\leq 5$, then $u$ gives at most 5 along the $l$-path, 1 to each 2-vertex on the 1-paths by \ru3. Since $(431-114)$ is reducible by \Cref{reducible pair}(iii), $u$ cannot give more than $\frac12$ to $v$ nor $w$ by \ru1. Moreover, since $(421-141-124)$ is also reducible by \Cref{reducible triple}(xi), $u$ cannot give $\frac12$ twice by \ru1. So at worst, we have
$$ \mu^*(u) = \frac{15}2 - 5 - 1 - 1 - \frac12 = 0 $$

\item If $u$ is a $(1,5^-,2)$, then $l\leq 4$ since $k+l+m\leq 7$. Thus, we distinguish the three following cases:

If $l=2$, then $u$ gives $2+\frac34$ along at least one of the 2-paths by \ru3 and \ru2(i) and 1 to each 2-vertex on the 1-path and other 2-path by \ru3. Since $(431-112-224)$ is reducible by \Cref{reducible triple}(xii), $v$ cannot be a $(4^+,3^+,1)$. As a result, $u$ gives at most $\frac12$ to $v$ by \ru1 and at most $\frac34$ to $w$ by \ru2. So at worst, we have
$$ \mu^*(u) = \frac{15}2 - 2 - \frac34 - 2 - 1 - \frac12 - \frac34 = \frac12 $$

If $l=3$, then $u$ gives 3 along the $l$-path and 1 to each 2-vertex on the 1-path and 2-path by \ru3.
\begin{itemize}
\item If $v$ is a $(4^+,2^+,1)$, then $w$ cannot be a $(2,1^+,4^+)$ nor a $(2,3,3)$ since $(421-132-233)$ and $(421-132-214)$ are reducible respectively by \Cref{reducible triple}(xiii) and (xiv). As a result, $u$ gives at most $\frac32$ to $v$ by \ru1 and nothing to $w$ by \ru2. So at worst, we have
$$ \mu^*(u) = \frac{15}2 - 3 - 2 - 1 - \frac32 = 0 $$
\item If $v$ is not a $(4^+,2^+,1)$, then $u$ gives nothing to $v$ by \ru1 and at most $\frac34$ to $w$ by \ru2. So at worst, we have
$$ \mu^*(u) = \frac{15}2 - 3 - 2 - 1 - \frac34 = \frac34 $$
\end{itemize}  

If $l=4$, then $u$ gives 4 along the $l$-path and 1 to each 2-vertex on the 1-path and 2-path by \ru3. Since $(431-114)$, $(422-214)$, and $(412-233)$ are reducible respectively by \Cref{reducible pair}(iii), (v) and (vi), $v$ cannot be a $(4^+,3^+,1)$ and $w$ cannot be a $(2,2^+,4^+)$ nor a $(2,3,3)$. Moreover, $(421-132-214)$ is reducible by \Cref{reducible triple}(xiv). As a result, $u$ can give at most $\frac12$ once to either $v$ or $w$. So at worst, we have
$$ \mu^*(u) = \frac{15}2 - 4 - 2 - 1 - \frac12 = 0 $$ 

\item If $u$ is a $(2,5^-,2)$, then $l\leq 3$, since $k+l+m\leq 7$. Thus, we distinguish the two following cases:

If $l=2$, then $u$ gives $2+\frac34$ along at least one of the 2-paths by \ru3 and \ru2(i) and 1 to each 2-vertex on the 2-paths by \ru3. Since $(422-222-214)$ and $(332-222-224)$ are reducible respectively by \Cref{reducible triple}(xv) and (xvi), $v$ cannot be a $(4^+,1^+,2)$ nor a $(3,3,2)$. The same holds for $w$. As a result, $u$ gives nothing to $v$ nor $w$ by \ru2. So at worst, we have
$$ \mu^*(u) = \frac{15}2 - 2 -\frac34 - 2 - 2 = \frac34 $$ 

If $l=3$, then $u$ gives 3 along the $l$-path and 1 to each 2-vertex on the 2-paths by \ru3. 
\begin{itemize}
\item If either $v$ or $w$ is a $(3,3,2)$, then the other cannot be a $(2,3,3)$ nor a $(2,1^+,4^+)$ as $(332-232-233)$ and $(332-232-214)$ are reducible respectively by \Cref{reducible triple}(xvii) and (xviii). So, $u$ gives only $\frac12$ once to either $v$ or $w$ by \ru2. So at worst, we have
$$ \mu^*(u) = \frac{15}2 - 3 - 2 - 2 -\frac12 = 0 $$
\item If neither $v$ nor $w$ is a $(3,3,2)$, then the remaining cases are as follows. Since $(422-223)$ is reducible by \Cref{reducible pair}(iv), $v$ cannot be a $(4^+,2^+,2)$. The same holds for $w$. Moreover, since $(412-232-214)$ is reducible by \Cref{reducible triple}(xix), they cannot both be $(4^+,1^+,2)$s. As a result, $u$ gives at most $\frac12$ once to either $v$ or $w$ by \ru2. So at worst, we have
$$ \mu^*(u) = \frac{15}2 - 3 - 2 - 2 -\frac12 = 0 $$

\end{itemize}
\end{itemize}

\textbf{Case 3:} Suppose that $k+l+m\leq 7$ and that $u$ is a $(3^+,5^-,3^+)$. \\
Since $k+l+m\leq 7$, the only possibilities for $u$ are as follows:
\begin{itemize}
\item If $u$ is a $(3,0,3)$, then $u$ can only give charge by \ru0 and \ru3. Since $(330-045)$ is reducible by \Cref{bi}(v), $u$ can give at most $\frac12$ to another 3-vertex by \ru0(iii) or \ru0(iv). As a result,
$$\mu^*(u)\geq \frac{15}{2} - \frac12 - 3 - 3 = 1 $$
\item If $u$ is a $(3,1,3)$, then $u$ can only give charge by \ru1 and \ru3. Since $(431-133)$ is reducible by \Cref{bi}(vi), $u$ can give at most $\frac12$ to another 3-vertex by \ru1(ii). As a result,
$$\mu^*(u)\geq \frac{15}{2} - \frac12 - 3 - 3 - 1 = 0 $$
\item If $u$ is a $(4,0,3)$, then $u$ can only give charge by \ru0 and \ru3. Since $(430-024)$ is reducible by \Cref{reducible pair}(i), $u$ actually does not give charge by \ru0. As a result,
$$\mu^*(u)\geq \frac{15}{2} - 4 - 3 = \frac12 $$ 
\end{itemize}

To conclude, we started with a charge assignment with a negative total sum, but after the discharging procedure, which preserved that sum, we end up with a non-negative one, which is a contradiction. In other words, there exists no counter-example $G$ to \Cref{main theorem}. 

\section{A non 4-colorable subcubic planar graph of girth 11}
\label{sec3}

In \citep{dvo08}, Dvo\u{r}\'ak, \u{S}krekovski, and Tancer presented a non $4$-colorable, planar, and subcubic graph with girth at least 9. The main building block of that graph relies upon an interesting property of $4$-colorings on path of length 5. Using the same property we managed to build a non $4$-colorable planar subcubic graph of girth 11. 

\begin{lemma}\label{4-path lemma}
Let $H$ be a subcubic graph of girth at least 11 and $\phi$ a $4$-coloring of $H$. Let $u_1u_2u_3u_4u_5u_6$ be a path of length $5$ in $H$, if $\phi(u_1)=\phi(u_6)$, then $\phi(u_2)=\phi(u_5)$.
\end{lemma}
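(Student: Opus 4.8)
The plan is to exploit the rigidity that a $2$-distance $4$-coloring forces along a path. Since $\phi$ is a $2$-distance coloring, any two of $u_1,\dots,u_6$ at distance at most $2$ in $H$ receive distinct colors, and the path itself gives $d_H(u_i,u_j)\le |i-j|$; hence every consecutive triple $\{u_i,u_{i+1},u_{i+2}\}$ is rainbow. I would first record that the girth hypothesis (girth $\geq 11 > 5$) guarantees that $u_1,\dots,u_6$ are six pairwise distinct vertices, so that $\phi$ assigns a single well-defined color to each and the distance bounds read off the path are legitimate; this is the only place the girth enters.

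Next I would normalize the picture. Up to renaming colors, set $\phi(u_3)=1$ and $\phi(u_4)=2$, which are distinct since $u_3u_4$ is an edge. Because $u_2$ is at distance at most $2$ from both $u_3$ and $u_4$, its color avoids $\{1,2\}$, so $\phi(u_2)\in\{3,4\}$; by the symmetric argument ($u_5$ is at distance at most $2$ from $u_3$ and $u_4$) we get $\phi(u_5)\in\{3,4\}$.

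Then I would argue by contradiction, assuming $\phi(u_2)\neq\phi(u_5)$, which forces $\{\phi(u_2),\phi(u_5)\}=\{3,4\}$. In each of the two subcases I propagate to the endpoints: $u_1$ avoids $\phi(u_2)$ and $\phi(u_3)=1$, while $u_6$ avoids $\phi(u_5)$ and $\phi(u_4)=2$. A direct check shows the two resulting two-element candidate sets are always disjoint — for instance if $\phi(u_2)=3,\phi(u_5)=4$ then $\phi(u_1)\in\{2,4\}$ while $\phi(u_6)\in\{1,3\}$ — which contradicts $\phi(u_1)=\phi(u_6)$. The mirror subcase is identical, so $\phi(u_2)=\phi(u_5)$.

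The argument is essentially a finite case analysis with exactly four colors, so there is no deep obstacle. The only points requiring care are (a) confirming all six vertices are distinct so that the $2$-distance constraints extracted from the path are valid, and (b) keeping the bookkeeping of excluded colors straight across the two subcases. I expect no step to be genuinely hard: the content is entirely the pigeonhole-style rigidity imposed by having precisely four available colors.
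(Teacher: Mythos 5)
Your proof is correct and is essentially the paper's argument: both are short contradiction proofs reading off the $2$-distance constraints along the six distinct vertices of the path. The only cosmetic difference is the direction of propagation — the paper fixes the colors of $u_1,u_6,u_2,u_5$ and derives that $u_4$ has no available color, while you fix $u_3,u_4$ and show the candidate sets for $u_1$ and $u_6$ are disjoint.
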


\begin{proof}
Since $H$ has girth at least 11, all considered vertices are distinct. Suppose by contradiction that $\phi(u_1)=\phi(u_6)$ but $\phi(u_2)\neq\phi(u_5)$. W.l.o.g. we set $\phi(u_1)=\phi(u_6)=a$, $\phi(u_2)=b$, and $\phi(u_5)=c$. Since $u_3$ sees $u_1$, $u_2$, and $u_5$, colored respectively $a$, $b$, and $c$, it must be colored $d$. Finally, $u_4$ sees $u_2$, $u_3$, $u_5$, and $u_6$, colored respectively by $b$, $d$, $c$, and $a$. Thus, $u_4$ is non-colorable, which is a contradiction since $\phi$ is a $4$-coloring of $H$. 
\end{proof}

\begin{lemma} \label{triangle lemma}
Let $H$ be a subcubic graph of girth 11 and $\phi$ a $4$-coloring of $H$. Let $u_1u_2u_3u_4u_5u_6$, $u_3u'_1u'_2u'_3u'_4v_1$, $u_4u''_1u''_2u''_3u''_4v_1$  be paths of length $5$ in $H$. Let $v_0\notin \{u'_4,u''_4\}$ be adjacent to $v_1$. If $\phi(u_1)=\phi(u_6)=\phi(v_0)$, then $\phi(u_2)=\phi(u_5)=\phi(v_1)$. 
\end{lemma}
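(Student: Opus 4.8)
The plan is to bootstrap everything from \Cref{4-path lemma} together with elementary distance-$2$ color-counting, exploiting that $v_1$ is reached from the main path by \emph{two} length-$5$ paths. Since $H$ has girth at least $11$, all the vertices named in the statement are pairwise distinct (any coincidence would close a cycle of length at most $10$), so I may freely speak of the colors of the relevant $2$-distance neighbors. Write the palette as $\{a,b,c,d\}$ and, using the hypothesis, set $\phi(u_1)=\phi(u_6)=\phi(v_0)=a$.

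First I would apply \Cref{4-path lemma} to the path $u_1u_2u_3u_4u_5u_6$: from $\phi(u_1)=\phi(u_6)$ it gives $\phi(u_2)=\phi(u_5)$; call this common color $b$, and note $b\neq a$ since $u_1u_2$ is an edge. The next step is to pin down $u_3$ and $u_4$. Each is adjacent to a vertex colored $b$ (namely $u_2$, resp.\ $u_5$) and lies at distance $2$ from a vertex colored $a$ (namely $u_1$, resp.\ $u_6$), and the two are adjacent to one another; hence $\{\phi(u_3),\phi(u_4)\}$ is exactly the pair of remaining colors, and I may label them so that $\phi(u_3)=c$ and $\phi(u_4)=d$.

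Now I propagate into the two attached paths. The vertex $u'_1$ is adjacent to $u_3$ and at distance $2$ from both $u_2$ and $u_4$, so $\phi(u'_1)\notin\{b,c,d\}$, forcing $\phi(u'_1)=a$; symmetrically $\phi(u''_1)=a$. Since $\phi(v_0)=\phi(u'_1)=\phi(u''_1)=a$, I can apply \Cref{4-path lemma} to each of the length-$5$ paths
\[
v_0\,v_1\,u'_4\,u'_3\,u'_2\,u'_1 \qquad\text{and}\qquad v_0\,v_1\,u''_4\,u''_3\,u''_2\,u''_1,
\]
whose two endpoints are equally colored; this yields $\phi(v_1)=\phi(u'_2)$ and $\phi(v_1)=\phi(u''_2)$ respectively. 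Finally, $u'_2$ is adjacent to $u'_1$ (color $a$) and at distance $2$ from $u_3$ (color $c$), so $\phi(u'_2)\in\{b,d\}$, while $u''_2$ is adjacent to $u''_1$ (color $a$) and at distance $2$ from $u_4$ (color $d$), so $\phi(u''_2)\in\{b,c\}$. Intersecting, the common value $\phi(v_1)=\phi(u'_2)=\phi(u''_2)$ must be $b$, which is exactly $\phi(u_2)=\phi(u_5)$, as desired.

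The color-counting is routine; the one genuine idea is that a single path to $v_1$ only confines $\phi(v_1)$ to a pair of colors, and it is the \emph{two} paths (the ``triangle'' of the title) that force the intersection down to the single color $b$. The point to be careful about is the distinctness of all vertices — in particular that $v_0$ is distinct from $u'_4,u''_4$ (given) and from the remaining path vertices (girth) — and the correct bookkeeping of which vertices play the role of the ``second-from-each-end'' vertices when invoking \Cref{4-path lemma} on the two paths through $v_0$.
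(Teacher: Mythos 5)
Your proof is correct and follows essentially the same route as the paper's: both pin down $\phi(u_3),\phi(u_4)$ and force $\phi(u'_1)=\phi(u''_1)=a$, then invoke \Cref{4-path lemma} on the two attached length-$5$ paths. The only difference is cosmetic: the paper argues by contradiction on $\phi(v_1)\in\{c,d\}$ using the paths $u_3u'_1\dots v_1$ and $u_4u''_1\dots v_1$ directly, whereas you shift each path by one vertex to $v_0v_1u'_4u'_3u'_2u'_1$ (resp.\ the $u''$ analogue) and conclude directly by intersecting the color constraints on $u'_2$ and $u''_2$.
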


\begin{proof}
Since $H$ has girth 11, all considered vertices are distinct. We assume w.l.o.g. that $\phi(u_1)=\phi(u_6)=\phi(v_0)=a$. By \Cref{4-path lemma}, since $\phi(u_1)=\phi(u_6)$, we must have $\phi(u_2)=\phi(u_5)$. W.l.o.g. we set $\phi(u_2)=\phi(u_5)=b$. As a result, we have $\{\phi(u_3),\phi(u_4)\}=\{c,d\}$. We assume w.l.o.g. that $\phi(u_3)=c$ and $\phi(u_4)=d$. Now, suppose by contradiction that $\phi(v_1)=c$. By \Cref{4-path lemma}, since $\phi(u_3)=\phi(v_1)$, we must have $\phi(u'_1)=\phi(u'_4)=a$. However, this is impossible since $u'_4$ sees $v_0$ which is colored $a$. By symmetry, the same argument holds when $\phi(v_1)=d$. Finally, since $v_1$ also sees $v_0$, thus $\phi(v_1)\notin\{a,c,d\}$, and so $\phi(v_1)=b=\phi(u_2)=\phi(u_5)$.
\end{proof}

\begin{figure}[htbp]
\begin{center}
\begin{tikzpicture}[scale=0.7]{thick}
\begin{scope}[every node/.style={circle,draw,minimum size=1pt,inner sep=2}]
    \node[label={above:$u_1$},label={below:$a$}] (1) at (0,0) {};
    \node[label={above:$u_2$},label={below:$b$}] (2) at (2,0) {};
    \node[label={above:$u_3$},label={below:$d$}] (3) at (4,0) {};
    \node[label={above:$u_4$}] (4) at (6,0) {};
    \node[label={above:$u_5$},label={below:$c$}] (5) at (8,0) {};
    \node[label={above:$u_6$},label={below:$a$}] (6) at (10,0) {};
\end{scope}

\begin{scope}[every edge/.style={draw=black}]
    \path (1) edge (2);
    \path (2) edge (3);
    \path (3) edge (4);
    \path (4) edge (5);
    \path (5) edge (6);
\end{scope}
\end{tikzpicture}
\caption{A non-valid coloring of $H$ in \Cref{4-path lemma}.}
\end{center}
\end{figure}

\begin{figure}[htbp]
\begin{center}
\begin{tikzpicture}[scale=0.7]{thick}
\begin{scope}[every node/.style={circle,draw,minimum size=1pt,inner sep=2}]
    \node[label={above:$u_1$},label={below:$a$}] (1) at (0,0) {};
    \node[label={above:$u_2$},label={below:$b$}] (2) at (2,0) {};
    \node[fill, label={above left:$u_3$},label={below:$c$}] (3) at (4,0) {};
    \node[fill, label={above right:$u_4$},label={below:$d$}] (4) at (6,0) {};
    \node[label={above:$u_5$},label={below:$b$}] (5) at (8,0) {};
    \node[label={above:$u_6$},label={below:$a$}] (6) at (10,0) {};
    
    \node[label={above left:$u'_1$},label={below left:$a$}] (1') at (4,1.5) {};
    \node[label={above left:$u'_2$}] (2') at (4,2.5) {};
    \node[label={above left:$u'_3$}] (3') at (4,3.5) {};
    \node[label={above left:$u'_4$}] (4') at (4,4.5) {};
    
    \node[label={above right:$u''_1$},label={below right:$a$}] (1'') at (6,1.5) {};
    \node[label={above right:$u''_2$}] (2'') at (6,2.5) {};
    \node[label={above right:$u''_3$}] (3'') at (6,3.5) {};
    \node[label={above right:$u''_4$}] (4'') at (6,4.5) {};
    
    \node[fill,label={left:$v_1$},label={right:$c$}] (v1) at (5,5.5) {};
    \node[label={left:$v_0$},label={right:$a$}] (v0) at (5,6.5) {};
\end{scope}

\begin{scope}[every edge/.style={draw=black}]
    \path (1) edge (2);
    \path (2) edge (3);
    \path (3) edge (4);
    \path (4) edge (5);
    \path (5) edge (6);
    
    \path (3) edge (1');
    \path (1') edge (2');
    \path (2') edge (3');
    \path (3') edge (4');
    \path (4') edge (v1);
    
    \path (4) edge (1'');
    \path (1'') edge (2'');
    \path (2'') edge (3'');
    \path (3'') edge (4'');
    \path (4'') edge (v1);
    
    \path (v0) edge (v1);
\end{scope}
\end{tikzpicture}
\caption{A non-valid coloring of $H$ in \Cref{triangle lemma}.}
\end{center}
\end{figure}

\begin{lemma} \label{Gneq}
The graph $G_{\neq}(u,v)$ in \Cref{Gneq figure} has the following properties:
\begin{itemize}
\item $G_{\neq}(u,v)$ is planar and subcubic.
\item $G_{\neq}(u,v)$ has girth 11.
\item The distance in $G_{\neq}(u,v)$ between $u$ and $v$ is 7.
\item Every $4$-coloring $\phi$ of $G_{\neq}(u,v)$ satisfies $\phi(u)\neq \phi(v)$.
\end{itemize}
\end{lemma}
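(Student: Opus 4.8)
The plan is to split the four assertions into the three structural ones, which I would settle by direct inspection of the drawing in \Cref{Gneq figure}, and the coloring assertion $\phi(u)\neq\phi(v)$, which is where the two propagation lemmas do their work. Throughout, ``coloring'' means $2$-distance $4$-coloring and ``sees'' means ``at distance at most $2$ from'', consistent with the conventions used in \Cref{4-path lemma} and \Cref{triangle lemma}.

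First I would dispatch planarity and subcubicity: planarity is read off from the crossing-free drawing, and subcubicity by checking that every vertex shown has at most three incident edges. For the girth I would note that $G_{\neq}(u,v)$ is assembled from the triangle configuration of \Cref{triangle lemma}, whose shortest cycle (two length-$5$ paths meeting at $v_1$ together with the edge $u_3u_4$) has length exactly $11$; I would then exhibit this $11$-cycle and argue that, since the gadget is a union of internally disjoint length-$5$ paths glued only at their endpoints, any cycle must traverse at least two full such paths and hence have length at least $11$, with equality only for the triangle cycles. The distance claim follows in the same spirit: I would write down one explicit $u$--$v$ path of length $7$ and check, using the same length-$5$ decomposition, that no shorter $u$--$v$ walk exists.

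For the coloring property I would argue by contradiction: assume a $2$-distance $4$-coloring $\phi$ with $\phi(u)=\phi(v)$. The mechanism is that equality of colors at the endpoints of a length-$5$ path forces, via \Cref{4-path lemma}, equality one vertex deeper, and the triangle version \Cref{triangle lemma} upgrades this to a three-way equality that it pushes across the apex $v_1$. Since $G_{\neq}(u,v)$ is built by chaining such triangles, the seed equality $\phi(u)=\phi(v)$ should propagate inward through the successive triangle gadgets; at each application I would verify that girth $11$ keeps the relevant vertices distinct and that the side condition $v_0\notin\{u'_4,u''_4\}$ of \Cref{triangle lemma} holds. The chain should terminate at a vertex that is then forced either to repeat a color it already sees at distance at most $2$, or to avoid all four colors, in both cases contradicting that $\phi$ is a valid $2$-distance $4$-coloring; this contradiction is exactly what yields $\phi(u)\neq\phi(v)$.

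The hard part will be making the chaining rigorous: I must confirm that at every step the hypotheses of \Cref{triangle lemma} are genuinely met (the correct adjacencies, the distinctness guaranteed by the girth, and the $v_0$ side condition), and that the forced equalities land on precisely the vertices where the final clash occurs, rather than stalling one triangle short. A secondary point of care is the girth computation: I must make sure that identifying the elementary triangle gadgets along shared paths does not accidentally create a cycle shorter than $11$, which is what ultimately pins the girth at $11$ and not below.
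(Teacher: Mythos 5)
Your handling of the three structural claims matches the paper, which also settles planarity, subcubicity, girth, and the $u$--$v$ distance by inspection of \Cref{Gneq figure}; no issue there. The gap is in the fourth claim, which is the entire content of the lemma. Your plan is to let ``the seed equality $\phi(u)=\phi(v)$ propagate inward through the successive triangle gadgets,'' but that mechanism does not exist as described: $u$ and $v$ are at distance $7$, so the pair $(u,v)$ is not the endpoint pair of any length-$5$ path, and neither \Cref{4-path lemma} nor \Cref{triangle lemma} accepts $\phi(u)=\phi(v)$ as a hypothesis. What the assumption $\phi(u)=\phi(v)=a$ actually buys is only that $a$ is excluded from the $2$-distance neighborhoods of $u$ and of $v$, i.e.\ from $u_1,u_2,v_5$ and from $u_5,u_6,v_1$. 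From there the paper normalizes $\phi(u_1)=b$, $\phi(u_2)=c$, $\phi(v_5)=d$ and runs an explicit case analysis on $\phi(u_6)\in\{b,c,d\}$ (with a further subdivision on $\phi(v_1)$ in the case $\phi(u_6)=c$), chasing forced colors around the outer $11$-cycle $u_1u_2\dots u_6v_1\dots v_5$ and into the three internal spokes through $v_7$, $v_8$, $v_6$; the lemmas are invoked only at the specific moments when two ends of a length-$5$ path happen to acquire equal colors inside a branch, and each branch ends in a clash via \Cref{triangle lemma}. None of that case analysis appears in your proposal, and you acknowledge as much (``the hard part will be making the chaining rigorous''). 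As written, the argument could stall: there is no a priori reason the forced equalities ``land on precisely the vertices where the final clash occurs'' without doing the branching.

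So this is not a complete proof but a statement of intent for the only nontrivial part. To close it you would need to: (1) reformulate the starting point correctly (the equality $\phi(u)=\phi(v)$ enters only through the excluded color $a$ near both ends, not as input to a propagation lemma); (2) carry out the three-way case split on $\phi(u_6)$ and, inside each case, track the forced colors explicitly until \Cref{4-path lemma} or \Cref{triangle lemma} yields a contradiction, checking at each invocation that the relevant six (resp.\ thirteen) vertices are the ones actually present in \Cref{Gneq figure} and that the side condition $v_0\notin\{u'_4,u''_4\}$ holds. Your secondary worry about the girth is the lesser concern: the $11$-cycle is visible in the figure and the gluing pattern of internally disjoint long paths does keep the girth at $11$, as you say.
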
 

\begin{proof}
One can verify that $G_{\neq}(u,v)$ is planar, subcubic, has girth 11, and that the distance between $u$ and $v$ is 7 thanks to \Cref{Gneq figure}. It remains to prove that $\phi(u)\neq \phi(v)$ for every $4$-coloring $\phi$ of $G_{\neq}(u,v)$. 

Suppose by contradiction that there exists a $4$-coloring $\phi$ such that $\phi(u)=\phi(v)=a$. We can assume w.l.o.g. that $\phi(u_1)=b$, $\phi(u_2)=c$, and $\phi(v_5)=d$. Since $u_6$ sees $v$ which is colored $a$, we distinguish the following cases based on $\phi(u_6)$:
\begin{itemize}
\item If $\phi(u_6)=b$, then $\phi(u_5)=\phi(u_2)=c$ by \Cref{4-path lemma} as $\phi(u_6)=\phi(u_1)$. As a result, $\phi(v_1)=d$. Since $v_2$ and $v_4$ both see $b$ and $d$, we have $\{\phi(v_2),\phi(v_4)\}=\{a,c\}$. Now, $v_3$ sees $\{\phi(v_1),\phi(v_2),\phi(v_4),\phi(v_5)\}=\{d,a,c\}$, so $\phi(v_3)=b$. Finally, $v_7$ sees $\{\phi(v_2),\phi(v_3),\phi(v_4)\}=\{a,b,c\}$, hence $\phi(v_7)=d$. However, this is impossible since $\phi(u_1)=\phi(u_6)=\phi(v_3)=b$, thus $\phi(u_2)=\phi(u_5)=\phi(v_7)=c$ by \Cref{triangle lemma}.
 
\item If $\phi(u_6)=c$, then we have the two following cases:
\begin{itemize}
\item If $\phi(v_1)=b$, then $\phi(v_2)=\phi(v_5)=d$ by \Cref{4-path lemma} as $\phi(v_1)=\phi(u_1)$. As a result, $\phi(u_5)=d$ and $\phi(v_6)=a$. Since $v_3$ and $v_4$ both see $b$ and $d$, we have $\{\phi(v_3),\phi(v_4)\}=\{a,c\}$. Now, $v_7$ sees $\{\phi(v_2),\phi(v_3),\phi(v_4)\}=\{d,a,c\}$, so $\phi(v_7)=b$. Since $u_3$ sees $b$, $c$, and $d$, $\phi(u_3)=a$ and consequently, $\phi(u_4)=b$ and $\phi(w_1)=c$. However, this is impossible since $\phi(u_4)=\phi(v_7)=\phi(v_1)=b$, thus $\phi(w_1)=\phi(w_4)=\phi(v_6)=a$ by \Cref{triangle lemma}.

\item If $\phi(v_1)=d$, then $\phi(u_5)=b$. All three vertices $v_2$, $v_3$, and $v_4$ see $d$, so $\{\phi(v_2),\phi(v_3),\phi(v_4)\}$ $=\{a,b,c\}$. As a result, $\phi(v_7)=d$. Both $u_3$ and $u_4$ see $b$ and $c$, so $\{\phi(u_3),\phi(u_4)\}=\{a,d\}$. Since $w_1$ sees $\{\phi(u_3),\phi(u_4),\phi(u_5)\}=\{a,d,b\}$, $\phi(w_1)=c$. Due to \Cref{triangle lemma}, we must have $\phi(u_4)=a$. Otherwise, by \Cref{triangle lemma}, $\phi(u_4)=d=\phi(v_7)=\phi(v_1)$ and $\phi(w_1)=\phi(w_4)=\phi(v_6)=c$ which is impossible since $v_6$ sees $u_6$ colored $c$. Thus, $\phi(u_3)=d$ and $\phi(t_1)=b$. However, this is also impossible since $\phi(u_3)=\phi(v_7)=\phi(v_5)=d$, thus $\phi(t_1)=\phi(t_4)=\phi(v_8)=b$ by \Cref{triangle lemma} and $v_8$ sees $u_1$ colored $b$.
\end{itemize}

\item If $\phi(u_6)=d$, then $\phi(v_1)=\phi(v_4)$ by \Cref{4-path lemma} as $\phi(u_6)=\phi(v_5)$. Since $v_4$ sees $b$ and $d$ and $v_1$ sees $a$ and $d$, $\phi(v_4)=\phi(v_1)=c$. As a result, $\phi(u_5)=b$ and $\phi(v_8)=a$. Both $v_2$ and $v_3$ see $c$ and $d$, so $\{\phi(v_2),\phi(v_3)\}=\{a,b\}$. Now, $v_7$ sees $\{\phi(v_2),\phi(v_3),\phi(v_4)\}=\{a,b,c\}$, so $\phi(v_7)=d$. Since $u_4$ sees $d$, $b$, and $c$, $\phi(u_4)=a$ and consequently, $\phi(u_3)=d$ and $\phi(t_1)=b$. However, this is impossible since $\phi(u_3)=\phi(v_7)=\phi(v_5)=d$, thus $\phi(t_1)=\phi(t_4)=\phi(v_8)=a$ by \Cref{triangle lemma}. 
\end{itemize}
\end{proof}

\begin{figure}[htbp]
\begin{center}
\subfigure[\label{Gneq figure}The gadget $G_{\neq}(u,v)$ in \Cref{Gneq}.]{
\begin{tikzpicture}[scale=0.7]{thick}
\begin{scope}[every node/.style={circle,draw,minimum size=1pt,inner sep=2}]
	\node[label={left:$u$}] (u) at (-1,0) {};
	\node[label={right:$v$}] (v) at (17,0) {};	
	\node[fill,label={above left:$u_1$}] (u1) at (0,0) {};
	\node[fill,label={above left:$u_2$}] (u2) at (1,4) {};
	\node[fill,label={above:$u_3$}] (u3) at (5,8) {};
	\node[fill,label={above:$u_4$}] (u4) at (11,8) {};
	\node[fill,label={above right:$u_5$}] (u5) at (15,4) {};
	\node[fill,label={above right:$u_6$}] (u6) at (16,0) {};
	
	\node[fill,label={below right:$v_1$}] (v1) at (15,-4) {};
	\node[fill,label={below right:$v_2$}] (v2) at (12,-7) {};
	\node[fill,label={below:$v_3$}] (v3) at (8,-8) {};
	\node[fill,label={below left:$v_4$}] (v4) at (4,-7) {};
	\node[fill,label={below left:$v_5$}] (v5) at (1,-4) {};
	
	\node[fill,label={right:$v_7$}] (v7) at (8,-6) {};
	
	\node[fill,label={right:$t_4$}] (t4) at (7.35,-3) {};
	\node[fill,label={right:$w_4$}] (w4) at (8.65,-3) {};
	\node[fill,label={right:$t_3$}] (t3) at (6.73,0) {};
	\node[fill,label={right:$w_3$}] (w3) at (9.27,0) {};
	\node[fill,label={right:$t_2$}] (t2) at (5.86,4) {};
	\node[fill,label={right:$w_2$}] (w2) at (10.14,4) {};
	\node[fill,label={right:$t_1$}] (t1) at (5.44,6) {};
	\node[fill,label={right:$w_1$}] (w1) at (10.56,6) {};
	
	\node[fill,label={left:$v_8$}] (v8) at (2,-3) {};
	
	\node[fill,label={below:$t''_4$}] (t''4) at (3,-2.37) {};
	\node[fill,label={below:$t''_3$}] (t''3) at (4,-1.74) {};
	\node[fill,label={below:$t''_2$}] (t''2) at (5,-1.1) {};
	\node[fill,label={below:$t''_1$}] (t''1) at (6,-0.48) {};
	
	\node[fill,label={left:$t'_4$}] (t'4) at (2.83,-1.5) {};
	\node[fill,label={left:$t'_3$}] (t'3) at (3.67,0) {};
	\node[fill,label={left:$t'_2$}] (t'2) at (4.5,1.5) {};
	\node[fill,label={left:$t'_1$}] (t'1) at (5.33,3) {};
	
	\node[fill,label={right:$v_6$}] (v6) at (14,-3) {};
	
	\node[fill,label={below:$w''_4$}] (w''4) at (13,-2.37) {};
	\node[fill,label={below:$w''_3$}] (w''3) at (12,-1.74) {};
	\node[fill,label={below:$w''_2$}] (w''2) at (11,-1.1) {};
	\node[fill,label={below:$w''_1$}] (w''1) at (10,-0.48) {};
	
	\node[fill,label={right:$w'_4$}] (w'4) at (13.17,-1.5) {};
	\node[fill,label={right:$w'_3$}] (w'3) at (12.33,0) {};
	\node[fill,label={right:$w'_2$}] (w'2) at (11.5,1.5) {};
	\node[fill,label={right:$w'_1$}] (w'1) at (10.67,3) {};
\end{scope}

\begin{scope}[every edge/.style={draw=black}]
    \path (u) edge (u1);
    \path (u6) edge (v);
    \path (u1) edge (u2);
    \path (u2) edge (u3);
    \path (u3) edge (u4);
    \path (u4) edge (u5);
    \path (u5) edge (u6);
    \path (u6) edge (v1);
    \path (v1) edge (v2);
    \path (v2) edge (v3);
    \path (v3) edge (v4);
    \path (v4) edge (v5);
    \path (v5) edge (u1);
    
    \path (v3) edge (v7);
    \path (v7) edge (u3);
    \path (v7) edge (u4);
    
    \path (v5) edge (v8);
    \path (v8) edge (t2);
    \path (v8) edge (t3);
    
    \path (v1) edge (v6);
    \path (v6) edge (w2);
    \path (v6) edge (w3);
\end{scope}
\end{tikzpicture}
}
\subfigure[Simplified drawing of $G_{\neq}(u,v)$.]{
\begin{tikzpicture}[scale=0.7]{thick}
\begin{scope}[every node/.style={circle,draw,minimum size=1pt,inner sep=2}]
	\node[label={above:$u$}] (u) at (0,0) {};
	\node[label={above:$v$}] (v) at (3,0) {};
	\node (g) at (1.5,0) {$G_{\neq}$};
\end{scope}

\begin{scope}[every edge/.style={draw=black}]
    \path (u) edge (g);
    \path (g) edge (v);
\end{scope}
\end{tikzpicture}
}
\caption{$G_{\neq}$.}
\end{center}
\end{figure}

\begin{lemma} \label{G'neq}
The graph $G'_{\neq}(u,v)$ in \Cref{G'neq figure} has the following properties:
\begin{itemize}
\item $G'_{\neq}(u,v)$ is planar and subcubic.
\item $G'_{\neq}(u,v)$ has girth 11.
\item The distance in $G'_{\neq}(u,v)$ between $u$ and $v$ is 10.
\item Every $4$-coloring $\phi$ of $G'_{\neq}(u,v)$ satisfies $\phi(u)\neq \phi(v)$.
\end{itemize}
\end{lemma}

\begin{figure}[htbp]
\begin{center}
\subfigure[\label{G'neq figure}The gadget $G'_{\neq}(u,v)$ in \Cref{G'neq}.]{
\begin{tikzpicture}[scale=0.7]{thick}
\begin{scope}[every node/.style={circle,draw,minimum size=1pt,inner sep=2}]
	\node[label={left:$u$}] (u) at (4,2) {};
	\node[label={above:$v$}] (v) at (14,2) {};
	
	\node[label={above:$w_1$}] (w1) at (12,2) {};
	\node[fill,label={above:$w_2$}] (w2) at (10,2) {};
	\node[label={above:$w_3$}] (w3) at (8,4) {};
	\node[label={above:$w_4$}] (w4) at (8,0) {};
	
	\node (g1) at (6,4) {$G_{\neq}$};
	\node (g2) at (6,0) {$G_{\neq}$};
\end{scope}

\begin{scope}[every edge/.style={draw=black}]
    \path (w2) edge (w3);
    \path (w2) edge (w4);
    \path (w3) edge (g1);
    \path (w4) edge (g2);
    \path (g1) edge (u);
    \path (g2) edge (u);
    \path (w1) edge (w2);
    \path (v) edge (w1);
\end{scope}
\end{tikzpicture}
}
\hfil
\subfigure[Simplified drawing of $G'_{\neq}(u,v)$.]{
\begin{tikzpicture}[scale=0.7]{thick}
\begin{scope}[every node/.style={circle,draw,minimum size=1pt,inner sep=2}]
	\node[label={above:$u$}] (u) at (0,0) {};
	\node[label={above:$v$}] (v) at (3,0) {};
	\node (g) at (1.5,0) {$G'_{\neq}$};
\end{scope}

\begin{scope}[every edge/.style={draw=black}]
    \path (u) edge[bend left=30] (g);
    \path (u) edge[bend right=30] (g);
    \path (g) edge (v);
\end{scope}
\end{tikzpicture}
}
\end{center}
\caption{$G'_{\neq}$.}
\end{figure}

\begin{proof}
One can verify that $G'_{\neq}(u,v)$ is planar, subcubic, has girth 11, and that the distance between $u$ and $v$ is 10 thanks to \Cref{G'neq figure} and \Cref{Gneq}. It remains to prove that $\phi(u)\neq \phi(v)$ for every $4$-coloring $\phi$ of $G'_{\neq}(u,v)$. Suppose by contradiction that there exists a 4-coloring $\phi$ of $G'_{\neq}(u,v)$ such that $\phi(u)=\phi(v)$, say $\phi(u)=a$. We only need to observe that $w_3$ and $w_4$ cannot be colored $a$ thanks to $G_{\neq}(u,v)$ and $w_1$ and $w_2$ cannot be colored $a$ since they see $v$. This is a contradiction as we have four vertices at distance two pairwise but only three colors left. 
\end{proof}

\begin{lemma} \label{Geq}
The graph $G_=(u,v)$ in \Cref{Geq figure} has the following properties:
\begin{itemize}
\item $G_=(u,v)$ is planar and subcubic.
\item $G_=(u,v)$ has girth 11.
\item The distance in $G_=(u,v)$ between $u$ and $v$ is 3.
\item Every $4$-coloring $\phi$ of $G_=(u,v)$ satisfies $\phi(u)=\phi(v)$.
\end{itemize}
\end{lemma}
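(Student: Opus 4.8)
The plan is to split the four assertions, dispatching the first three by direct inspection of \Cref{Geq figure} and reserving the real work for the coloring property. Since $G_=(u,v)$ is assembled from copies of the previously-built gadgets $G_{\neq}$ (\Cref{Gneq}) and $G'_{\neq}$ (\Cref{G'neq}), each of which is already planar, subcubic, and of girth $11$, for planarity, subcubicity, and girth it suffices to check that the wiring joining these sub-gadgets (together with the few extra vertices and edges) keeps every degree at most $3$ and creates no face shorter than $11$; the claim $d(u,v)=3$ is then read off the shortest $u$--$v$ path in the figure. As with \Cref{Gneq} and \Cref{G'neq}, I would simply state these as verifiable from \Cref{Geq figure}.

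For the coloring property I would argue by contradiction, following the template of the proof of \Cref{Gneq}. The point is that each embedded $G_{\neq}$ (resp.\ $G'_{\neq}$) forces its two terminals to receive distinct colors, so I may use the sub-gadgets as black boxes imposing ``$\neq$'' constraints between designated vertices. The intended mechanism is that, whatever the precise wiring in \Cref{Geq figure}, the gadget exhibits three auxiliary ``reference'' vertices that are pairwise forced to differ and hence carry three distinct colors out of $\{a,b,c,d\}$, while both $u$ and $v$ are forced (either through further $\neq$-gadgets or because they lie within distance $2$ of all three references) to avoid each of these three colors. The unique remaining color is then pinned to both $u$ and $v$, giving $\phi(u)=\phi(v)$. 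Assuming instead $\phi(u)\neq\phi(v)$, at least one reference color would have to reappear at $u$ or $v$, violating the corresponding $\neq$-constraint; equivalently, some internal vertex would end up seeing all four colors in its $2$-distance neighborhood and become uncolorable.

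The main obstacle will be the bookkeeping of this last step rather than any conceptual difficulty. Exactly as in \Cref{Gneq}, I expect a case analysis on the colors forced at the internal vertices, in which I repeatedly invoke \Cref{4-path lemma} and \Cref{triangle lemma} to propagate equalities along the length-$5$ paths and to eliminate the configurations where a gadget terminal would be driven to repeat a forbidden color. The delicate part is verifying that the girth-$11$ hypothesis keeps all vertices named in these lemmas distinct and at the mutual distances the lemmas require, so that each application is legitimate; once the reference triple is shown to realize three distinct colors, the final contradiction follows immediately by the pigeonhole principle on the fourth color.
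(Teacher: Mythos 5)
Your proposal is correct and is essentially the paper's proof: the three ``reference'' vertices are $t_1$, $t_2$, $w_1$, which together with $u$ are pairwise within distance 2 and hence receive four distinct colors $a,b,c,d$, while $v$ sees $t_1$ and $w_1$ directly and is forbidden from $\phi(t_2)$ by the embedded $G'_{\neq}(t_2,v)$, pinning $\phi(v)=a=\phi(u)$. The extra case analysis via \Cref{4-path lemma} and \Cref{triangle lemma} that you anticipate is not needed --- the sub-gadgets are used purely as black boxes, so the coloring step reduces to the pigeonhole argument you describe.
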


\begin{figure}[htbp]
\begin{center}
\subfigure[\label{Geq figure}The gadget $G_=(u,v)$ in \Cref{Geq}.]{
\begin{tikzpicture}[scale=0.7]{thick}
\begin{scope}[every node/.style={circle,draw,minimum size=1pt,inner sep=2}]
	\node[label={above:$u$}] (u) at (0,2) {};
	\node[fill,label={above:$t_1$}] (t1) at (2,2) {};	
	\node[fill,label={above left:$t_2$}] (t2) at (4,2) {};
	\node[label={above:$v$}] (v) at (14,2) {};
	
	\node[fill,label={above:$w_1$}] (w1) at (12,2) {};
	\node[fill,label={above:$w_2$}] (w2) at (10,2) {};
	\node[label={above:$w_3$}] (w3) at (8,4) {};
	\node[label={above:$w_4$}] (w4) at (8,0) {};
	
	\node (g1) at (6,4) {$G_{\neq}$};
	\node (g2) at (6,0) {$G_{\neq}$};
\end{scope}

\begin{scope}[every edge/.style={draw=black}]
    \path (w2) edge (w3);
    \path (w2) edge (w4);
    \path (w3) edge (g1);
    \path (w4) edge (g2);
    \path (g1) edge (t2);
    \path (g2) edge (t2);
    \path (w1) edge (w2);
    \path (v) edge (w1);
    \path (u) edge (t2);
    \path (t1) edge[bend right=90] (w1);
\end{scope}
\end{tikzpicture}
}
\hfil
\subfigure[Simplified drawing of $G_=(u,v)$.]{
\begin{tikzpicture}[scale=0.7]{thick}
\begin{scope}[every node/.style={circle,draw,minimum size=1pt,inner sep=2}]
	\node[label={above:$u$}] (u) at (0,0) {};
	\node[label={above:$v$}] (v) at (3,0) {};
	\node (g) at (1.5,0) {$G_=$};
\end{scope}

\begin{scope}[every edge/.style={draw=black}]
    \path (u) edge (g);
    \path (g) edge (v);
\end{scope}
\end{tikzpicture}
}
\caption{$G_=$.}
\end{center}
\end{figure}

\begin{proof}
One can verify that $G_=(u,v)$ is planar, subcubic, has girth 11, and that the distance between $u$ and $v$ is 3 thanks to \Cref{Geq figure} and \Cref{Gneq}. It remains to prove that $\phi(u)=\phi(v)$ for every $4$-coloring $\phi$ of $G_=(u,v)$. Let $\phi$ be a $4$-coloring of $G_=(u,v)$, we can assume w.l.o.g. that $\phi(u)=a$, $\phi(t_1)=b$, $\phi(t_2)=c$, and $\phi(w_1)=d$. Observe that $v$ sees $t_1$ and $w_1$ colored respectively $b$ and $d$. Moreover, due to \Cref{G'neq}, $\phi(v)\neq \phi(t_2)=c$ as $G_=(u,v)$ contains $G'_{\neq}(t_2,v)$. As a result, we must have $\phi(v)=a=\phi(u)$.
\end{proof}

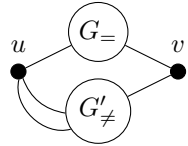
\begin{figure}[htbp]
\begin{center}
\begin{tikzpicture}[scale=0.7]{thick}
\begin{scope}[every node/.style={circle,draw,minimum size=1pt,inner sep=2}]
	\node[fill,label={above:$u$}] (u) at (0,0.75) {};
	\node[fill,label={above:$v$}] (v) at (3,0.75) {};
	\node (g1) at (1.5,0) {$G'_{\neq}$};
	\node (g2) at (1.5,1.5) {$G_=$};
\end{scope}

\begin{scope}[every edge/.style={draw=black}]
    \path (u) edge[bend right=30] (g1);
    \path (u) edge[bend right=60] (g1);
    \path (g1) edge (v);
    \path (u) edge (g2);
    \path (g2) edge (v);
\end{scope}
\end{tikzpicture}
\caption{\label{G figure}A non-4-colorable planar subcubic graph of girth 11.}
\end{center}
\end{figure}

As a direct consequence of \Cref{G'neq} and \Cref{Geq}, we get the following lemma.

\begin{lemma} \label{G}
The graph $G$ in \Cref{G figure} is a planar subcubic graph of girth 11 with $\chi^2(G)\geq 5$.
\end{lemma}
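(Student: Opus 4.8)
The plan is to realize the target graph $G$ as two of the previously constructed gadgets glued along the common pair $\{u,v\}$, and then to play the two gadgets' coloring constraints against each other. Concretely, the graph of \Cref{G figure} is obtained by taking one copy of $G'_{\neq}(u,v)$ and one copy of $G_=(u,v)$ and identifying their respective endpoints $u$ and $v$. The decisive point is that \Cref{G'neq} forces $\phi(u)\neq\phi(v)$ in every $4$-coloring while \Cref{Geq} forces $\phi(u)=\phi(v)$; these two requirements are incompatible, so no $2$-distance $4$-coloring can exist and $\chi^2(G)\geq 5$. Thus the proof splits into a routine structural verification (planarity, subcubicity, girth) followed by this one-line contradiction.

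For the structural part I would first check degrees: inside $G'_{\neq}(u,v)$ the vertex $u$ has degree $2$ and $v$ has degree $1$ (as recorded in the simplified drawing), while inside $G_=(u,v)$ both $u$ and $v$ have degree $1$. After the identification $u$ acquires degree $2+1=3$ and $v$ degree $1+1=2$, so $G$ remains subcubic; every other vertex keeps its degree from its own gadget. Planarity follows from drawing the two gadgets in disjoint regions with $u$ and $v$ placed on their outer boundaries so the identification can be made without crossings, which is exactly what \Cref{G figure} depicts. For the girth, any cycle contained entirely in one gadget already has length at least $11$ by \Cref{G'neq} and \Cref{Geq}; the only genuinely new cycles are those passing through both gadgets, and such a cycle must traverse a $u$--$v$ path inside $G'_{\neq}$ (length at least the distance $10$) and a $u$--$v$ path inside $G_=$ (length at least the distance $3$), hence has length at least $10+3=13$. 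Therefore $g(G)=11$.

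For the coloring contradiction, suppose for contradiction that $\phi$ is a $2$-distance $4$-coloring of $G$. Both $G'_{\neq}(u,v)$ and $G_=(u,v)$ are subgraphs of $G$, and since distances cannot decrease when passing to a subgraph, any two vertices at distance at most $2$ in a subgraph are also at distance at most $2$ in $G$; consequently the restriction of $\phi$ to each gadget is a valid $2$-distance $4$-coloring of that gadget. Applying \Cref{G'neq} to $\phi|_{G'_{\neq}(u,v)}$ gives $\phi(u)\neq\phi(v)$, whereas applying \Cref{Geq} to $\phi|_{G_=(u,v)}$ gives $\phi(u)=\phi(v)$, a contradiction. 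Hence $G$ admits no $2$-distance $4$-coloring, proving $\chi^2(G)\geq 5$.

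Since the heavy lifting was already done in \Cref{Gneq}, \Cref{G'neq}, and \Cref{Geq}, I do not expect a substantive obstacle here; the only points requiring care are the girth bookkeeping at the gluing (ensuring the two distance values $10$ and $3$ really do prevent short mixed cycles) and the small but essential observation that restricting a $2$-distance coloring to a subgraph preserves the $2$-distance constraint, which is what licenses the simultaneous use of both gadget lemmas.
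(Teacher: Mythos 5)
Your proof is correct and matches the paper, which simply states the lemma as a direct consequence of \Cref{G'neq} and \Cref{Geq}: the graph is the union of $G'_{\neq}(u,v)$ and $G_=(u,v)$ on the shared pair $u,v$, and the two gadgets impose contradictory constraints on $\phi(u),\phi(v)$. Your additional bookkeeping on degrees, girth, and the restriction of a $2$-distance coloring to a subgraph is exactly the routine verification the paper leaves implicit.
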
 

In \citep{dvo08}, the authors also proved the NP-completeness of the problem of deciding if a planar subcubic graph of girth 9 is 4-colorable using a gadget that can reproduce colors at a far enough distance to preserve the girth condition. The same proof can be adapted directly to prove the NP-completeness of deciding if a planar subcubic graph of girth 11 is 4-colorable by using a concatenation of $G_=(u,v)$ to get a large enough distance.

\bibliographystyle{plainnat}
\bibliography{References}

\end{document}